\documentclass[a4paper, 11pt]{amsart}   
\usepackage{mathptmx, amssymb,amscd,latexsym, eulervm}   
\usepackage{amsmath}
\usepackage{amsthm} 
\usepackage{mathdots}
\usepackage[pagebackref,colorlinks=true,citecolor=violet,linkcolor=blue,urlcolor=blue]{hyperref}
\usepackage{color}
\usepackage[onehalfspacing]{setspace}
%---------------------------
\usepackage{tabularx}
\usepackage{amsfonts}
\usepackage{paralist}
\usepackage{aliascnt}
\usepackage{amscd}
\usepackage{blkarray}
\usepackage{mathbbol}
\usepackage{comment}
\usepackage{setspace}
\usepackage[inner=2.5cm,outer=2.5cm, bottom=3.2cm]{geometry}
\usepackage{tikz, tikz-cd}
\usepackage{calligra,mathrsfs}
\usepackage{stmaryrd}
\usepackage{youngtab}
\usepackage{ytableau}
\usepackage{tikz}
\usetikzlibrary{matrix}
\usetikzlibrary{arrows,calc}
\allowdisplaybreaks

%%***********************************************************************

\iffalse

\BibSpec{collection.article}{%
	+{}  {\PrintAuthors}                {author}
	+{,} { \textit}                     {title}
	+{.} { }                            {part}
	+{:} { \textit}                     {subtitle}
	+{,} { \PrintContributions}         {contribution}
	+{,} { \PrintConference}            {conference}
	+{}  {\PrintBook}                   {book}
	+{,} { }                            {booktitle}
	+{,} { }                            {series}
	+{, vol.} { }                            {volume}
	+{,} { }                            {publisher}
	+{,} { \PrintDateB}                 {date}
	+{,} { pp.~}                        {pages}
	+{,} { }                            {status}
	+{,} { \PrintDOI}                   {doi}
	+{,} { available at \eprint}        {eprint}
	+{}  { \parenthesize}               {language}
	+{}  { \PrintTranslation}           {translation}
	+{;} { \PrintReprint}               {reprint}
	+{.} { }                            {note}
	+{.} {}                             {transition}
	+{}  {\SentenceSpace \PrintReviews} {review}
}
\fi
\AtBeginDocument{%
	\def\MR#1{}
}

\makeatletter
\@namedef{subjclassname@2020}{%
	\textup{2020} Mathematics Subject Classification}
\makeatother

%%*****************New Commands*************************************
\newcommand{\RR}{\mathbb{R}}
\newcommand{\kk}{\mathbb{k}}

\newcommand{\fS}{\mathfrak{S}}

\newcommand{\NN}{\normalfont\mathbb{N}}
\newcommand{\ZZ}{{\normalfont\mathbb{Z}}}
\newcommand{\CC}{{\normalfont\mathbb{C}}}

\newcommand{\PP}{{\normalfont\mathbb{P}}}

\newcommand{\xx}{\normalfont\mathbf{x}}

\newcommand{\dd}{\normalfont\mathbf{d}}

\newcommand{\mm}{{\normalfont\mathfrak{m}}}

\newcommand{\QQ}{\mathbb{Q}}
\newcommand{\pp}{\mathfrak{p}}
\newcommand{\aaa}{\mathfrak{a}}

\newcommand{\ttt}{{\normalfont\textbf{t}}}
\newcommand{\sss}{{\normalfont\textbf{s}}}

\newcommand{\rank}{\normalfont\text{rank }}

\newcommand{\Ext}{\normalfont\text{Ext}}

\newcommand{\vol}{{\rm vol}}
\newcommand{\Vol}{{\rm Vol}}
\newcommand{\bn}{{\normalfont\mathbf{n}}}
\newcommand{\bm}{{\normalfont\mathbf{m}}}
\newcommand{\bw}{{\normalfont\mathbf{w}}}
\newcommand{\ee}{{\normalfont\mathbf{e}}}

\newcommand{\codim}{{\rm codim}}

\newcommand{\Ann}{\normalfont\text{Ann}}

\newcommand{\Hom}{{\normalfont\text{Hom}}}

\newcommand{\HL}{\normalfont\text{H}_{\mm}}
\newcommand{\HH}{\normalfont\text{H}}

\newcommand{\AAA}{\mathbb{A}}

\newcommand{\init}{{\rm in}}

\newcommand{\Spec}{{\normalfont\text{Spec}}}

\newcommand{\Cone}{{\normalfont\text{Cone}}}

\newcommand{\Pic}{{\normalfont\text{Pic}}}

\newcommand{\fR}{\mathfrak{R}}

\newcommand{\Gr}{{\rm Gr}}
\newcommand{\Fl}{{{\mathcal{F}\ell}_n}}

\newcommand{\cC}{\mathcal{C}}
\newcommand{\Mat}{{\rm Mat}}
\newcommand{\supp}{\mathrm{supp}}

%\newcommand{\sfib}[1]{\widetilde{\mathfrak{F}_{#1}}}
%%**********************

\newcommand{\Hilb}{\normalfont\text{Hilb}}

%%************* New Theorems***************************************
%% Italic text theorems 
\newtheorem{theorem}{Theorem}[section]

\newtheorem{headthm}{Theorem}

\newaliascnt{headcor}{headthm}
\newtheorem{headcor}[headcor]{Corollary}
\aliascntresetthe{headcor}

\newaliascnt{headconj}{headthm}

\aliascntresetthe{headconj}

\newaliascnt{corollary}{theorem}
\newtheorem{corollary}[corollary]{Corollary}
\aliascntresetthe{corollary}

\newaliascnt{claim}{theorem}

\aliascntresetthe{claim}

\newaliascnt{lemma}{theorem}
\newtheorem{lemma}[lemma]{Lemma}
\aliascntresetthe{lemma}

\newaliascnt{conjecture}{theorem}

\aliascntresetthe{conjecture}

\newaliascnt{proposition}{theorem}
\newtheorem{proposition}[proposition]{Proposition}
\aliascntresetthe{proposition}

%% Non italic text theorems 
\theoremstyle{definition}
\newaliascnt{definition}{theorem}
\newtheorem{definition}[definition]{Definition}
\aliascntresetthe{definition}

\newaliascnt{notation}{theorem}

\aliascntresetthe{notation}

\newaliascnt{example}{theorem}
\newtheorem{example}[example]{Example}
\aliascntresetthe{example}

\newaliascnt{examples}{theorem}

\aliascntresetthe{examples}

\newaliascnt{remark}{theorem}
\newtheorem{remark}[remark]{Remark}
\aliascntresetthe{remark}

\newaliascnt{question}{theorem}
\newtheorem{question}[question]{Question}
\aliascntresetthe{question}

\newaliascnt{questions}{theorem}

\aliascntresetthe{questions}

\newaliascnt{problem}{theorem}

\aliascntresetthe{problem}

\newaliascnt{construction}{theorem}

\aliascntresetthe{construction}

\newaliascnt{setup}{theorem}
\newtheorem{setup}[setup]{Setup}
\aliascntresetthe{setup}

\newaliascnt{algorithm}{theorem}

\aliascntresetthe{algorithm}

\newaliascnt{observation}{theorem}

\aliascntresetthe{observation}

\newaliascnt{defprop}{theorem}

\aliascntresetthe{defprop}

%% Setup resized Zapf Chancery calligraphic fonts
\DeclareFontFamily{OT1}{pzc}{}
\DeclareFontShape{OT1}{pzc}{m}{it}{<-> s * [1.100] pzcmi7t}{}
\DeclareMathAlphabet{\mathchanc}{OT1}{pzc}{m}{it}

\def\equationautorefname~#1\null{(#1)\null}
\def\sectionautorefname~#1\null{Section #1\null}
\def\subsectionautorefname~#1\null{\S #1\null}

\def\surjects{\twoheadrightarrow}
%\renewcommand*{\qedsymbol}{\(\blacksquare\)}

%%***********************************************************************

%For pipe dreams, courtesy Nantel Bergeron, edited heavily by EM
\font\co=lcircle10

%Pair of avoiding elbows looks kind of like a `j' and then an `r'
\def\jr{\smash{\raise2pt\hbox{\co \rlap{\rlap{\char'005} \char'007}}
		\raise6pt\hbox{\rlap{\vrule height5pt}}
		\raise2pt\hbox{\rlap{\hskip4pt \vrule height0.4pt depth0pt
				width5.7pt}}
		\raise2pt\hbox{\rlap{\hskip-9.5pt \vrule height.4pt depth0pt
				width6.2pt}}
		\lower6pt\hbox{\rlap{\vrule height4.5pt}}}}
%Elbow tile, upside down from \jr
\def\rj{\smash{\raise2pt\hbox{\co \rlap{\rlap{\char'004} \char'006}}
		\raise6pt\hbox{\rlap{\vrule height5pt}}
		%              \raise2.05pt\hbox{\rlap{\hskip4pt \vrule height0.4pt depth0pt
				\raise2pt\hbox{\rlap{\hskip4pt \vrule height0.4pt depth0pt
						width5.7pt}}
				%              \raise2.05pt\hbox{\rlap{\hskip-9.5pt \vrule height.4pt depth0pt
						\raise2pt\hbox{\rlap{\hskip-9.5pt \vrule height.4pt depth0pt
								width6.2pt}}
						\lower6pt\hbox{\rlap{\vrule height4.5pt}}}}
				%Single elbow, shaped like a `j' (`j end', or `j elbow')
				\def\je{\smash{\raise2pt\hbox{\co \rlap{\rlap{\char'005}
								\phantom{\char'007}}}\raise6pt\hbox{\rlap{\vrule height5pt}}
						\raise2pt\hbox{\rlap{\hskip-9.5pt \vrule height.4pt depth0pt
								width6.2pt}}}}
				%Single elbow, an upside-down \je
				\def\ej{\smash{\raise2pt\hbox{\co \rlap{\rlap{\char'004}\phantom{\char'006}}}
						%              \raise2.05pt\hbox{\rlap{\hskip-9.5pt \vrule height.4pt depth0pt
								\raise2pt\hbox{\rlap{\hskip-9.5pt \vrule height.4pt depth0pt
										width6.2pt}}
								\lower6pt\hbox{\rlap{\vrule height4.5pt}}}}
						%Single elbow, shaped like an `r' (`end r', or `elbow r')
						\def\er{\smash{\raise2pt\hbox{\co \rlap{\rlap{\phantom{\char'005}} \char'007}}
								\raise2pt\hbox{\rlap{\hskip4pt \vrule height0.4pt depth0pt
										width5.7pt}}
								\lower6pt\hbox{\rlap{\vrule height4.5pt}}}}
						%Single elbow, an upside-down \er
						\def\re{\smash{\raise2pt\hbox{\co \rlap{\rlap{\phantom{\char'004}} \char'006}}
								\raise6pt\hbox{\rlap{\vrule height5pt}}
								%              \raise2.05pt\hbox{\rlap{\hskip4pt \vrule height0.4pt depth0pt
										\raise2pt\hbox{\rlap{\hskip4pt \vrule height0.4pt depth0pt
												width5.7pt}}}}
								%Cross, for pipe dreams (as opposed to \plus, for plus and dot diagrams)
								\def\+{\smash{\lower6pt\hbox{\rlap{\vrule height17pt}}
										%               \raise2.05pt% for top-to-bottom mirror reflected pipe dreams
										\raise2pt%%%% for ordinary pipe dreams
										\hbox{\rlap{\hskip-9pt \vrule height.4pt depth0pt
												width18.7pt}}}}
								%Horizontal bar = half of cross
								\def\hor{\smash{\raise2pt\hbox{\rlap{\hskip-9.5pt \vrule height.4pt depth0pt
												width19.2pt}}}}
								%Vertical bar = half of cross
								\def\ver{\smash{\lower6pt\hbox{\rlap{\vrule height17pt}}}}
								%Mostly horizontal cross, for grassmannian pipe dreams
								\def\ho{\smash{\hbox{\rlap{\vrule height5pt}}
										%               \raise2.05pt% for top-to-bottom mirror reflected pipe dreams
										\raise2pt%%%% for ordinary pipe dreams
										\hbox{\rlap{\hskip-9pt \vrule height.4pt depth0pt
												width18.7pt}}}}
								
								%Phantom cross, for blank spaces in heuristic rc-graphs (no elbows drawn)
								
								%Stars that take up no space, for irrelevant crosses in rc-graphs

								%Permutation, for the column labels in a pipe dream

								% Use the following two in math mode only; to flip the \textelbow
								% top-to-bottom, uncomment the %-commented out line and %-comment out
								% the adjacent line
								\def\textcross{\ \smash{\lower4pt\hbox{\rlap{\hskip4.15pt\vrule height14pt}}
										\raise2.8pt\hbox{\rlap{\hskip-3pt \vrule height.4pt depth0pt
												width14.7pt}}}\hskip12.7pt}
								\def\textelbow{\ \hskip.1pt\smash{\raise2.75pt%
										%               \hbox{\co \hskip 4.15pt\rlap{\rlap{\char'005} \char'007}
											\hbox{\co \hskip 4.15pt\rlap{\rlap{\char'004} \char'006}
												\lower6.8pt\rlap{\vrule height3.5pt}
												\raise3.6pt\rlap{\vrule height3.5pt}}
											\raise2.8pt\hbox{%
												\rlap{\hskip-7.15pt \vrule height.4pt depth0pt width3.5pt}%
												\rlap{\hskip4.05pt \vrule height.4pt depth0pt width3.5pt}}}
										\hskip8.7pt}
									
									%%%%%%%%%%%%%%%%%%%%%%%%%%%%%%%%%%%%%%%%%%%%%%%%%%%%%%%%%%%%%%%%%%%%%%%%%%%%%%%%%%
									\begin{document}
										
										% ---------------Tittle and presentation----------------------------
										\title{Log-concavity of polynomials arising from equivariant cohomology}

										\author{Yairon Cid-Ruiz}
										\address{Department of Mathematics, North Carolina State University, Raleigh, NC 27695, USA}
										\email{ycidrui@ncsu.edu}
										
										\author{Yupeng Li}
										\address{Department of Mathematics, Duke University, Durham, NC 27708, USA}
										\email{ypli@math.duke.edu}
										
										\author{Jacob P. Matherne}
										\address{Department of Mathematics, North Carolina State University, Raleigh, NC 27695, USA}
										\email{jpmather@ncsu.edu}
										
										\date{\today}
										\keywords{equivariant cohomology, multidegrees, cohomology rings, Richardson polynomials, Lorentzian polynomials, covolume polynomials, log-concavity, Schubert polynomials, Richardson varieties, Schubert varieties}
										\subjclass[2020]{14M15, 14C15, 14C17, 13H15, 52B40}
										
										\thanks{Jacob Matherne received support from a Simons Foundation Travel Support for Mathematicians Award MPS-TSM-00007970.}

										\begin{abstract}
											We study the equivariant cohomology classes of torus-equivariant subvarieties of the space of matrices.  
											For a large class of torus actions, we prove that the polynomials representing these classes (up to suitably changing signs) are covolume polynomials in the sense of Aluffi.  
											We study the cohomology rings of complex varieties in terms of Macaulay inverse systems over $\ZZ$.  
											As applications, we show that under certain conditions, the Macaulay dual generator is a denormalized Lorentzian polynomial in the sense of Br\"and\'en and Huh, and we give a characteristic-free extension (over $\mathbb{Z}$) of the  result of Khovanskii and Pukhlikov describing the cohomology ring of toric varieties in terms of volume polynomials.
										\end{abstract}	
										
										\maketitle

										\section{Introduction}
										
										A sequence of real numbers $a_0,a_1,\ldots,a_n$ is called \emph{log-concave} if $a_i^2 \geq a_{i-1}a_{i+1}$ for all $1 \le i \le n-1$.  
										Log-concave sequences naturally appear throughout algebra, combinatorics, and geometry; for thorough references on log-concavity and related concepts, we suggest \cite{B89,S89,B15}.  
										Recently, the theory of \emph{Lorentzian polynomials} was introduced by Br\"and\'en and Huh \cite{LORENTZIAN} (and in \cite{agv21,algv24,algv24b} under the name of \emph{completely log-concave polynomials}) and they have been instrumental in proving log-concavity and related results throughout mathematics, see, e.g.,  \cite{LORENTZIAN,EH20,MS21,HMMSD,BL23,BLP23,R23,algv24b, BES24,HMV24,KMS24,MMS24,RU24}. 
										
										The prototypical examples of Lorentzian polynomials are the volume polynomials of projective varieties \cite{LORENTZIAN}. Likewise, the \emph{covolume polynomials} of Aluffi \cite{ALUFFI_COVOL} are the prototypical examples of the \emph{dually Lorentzian polynomials} of Ross, S\"u\ss, and Wannerer \cite{RSW23}.  We introduce a new family of polynomials that specialize to a number of important polynomials in algebraic combinatorics, and we prove that they are covolume polynomials.

										For two permutations $u,w \in S_n$ with $w \ge u$ in Bruhat order, we define the \emph{double Richardson polynomial} as the product of double Schubert polynomials
										\[
										\fR_{w/u}(\ttt,\sss) \;=\; \fS_u(\ttt,\sss) \fS_{w_0 w}(\ttt,\sss'),
										\]
										where $\sss' = (s_n,\ldots,s_1)$ denotes the reverse of $\sss = (s_1,\ldots,s_n)$. 
										The double Richardson polynomial represents the torus-equivariant class of matrix Richardson varieties in the cohomology ring $\HH_T^\bullet(\Mat_{n,n})$ of the space of $n\times n$ matrices with the standard action of the torus $T = (\CC^*)^n \times (\CC^*)^n$. 
										Double Richardson polynomials specialize to (double) Schubert polynomials:
										\[
										\fS_u(\ttt,\sss) \;=\; \fR_{w_0/u}(\ttt,\sss)  \qquad \text{and} \qquad \fS_u(\ttt,\mathbf{0}) \;=\; \fS_u(\ttt) \;=\; \fR_{w_0/u}(\ttt, \mathbf{0}).
										\]
										The \emph{Richardson polynomial} $\fR_{w/u}(\ttt) = \fR_{w/u}(\ttt, \mathbf{0})$ is closely related to the \emph{skew Schubert polynomial} $\fS_{w/u}(\ttt)$ of Lenart and Sottile \cite{SKEWSCHUBERT} since both polynomials yield the same class in the cohomology ring $\HH^\bullet(\Fl)$ of the flag variety $\Fl$. 
										But we point out that $\fR_{w/u}(\ttt)$ and $\fS_{w/u}(\ttt)$ may not be equal in general (see \autoref{rem_skew_Schubert}).

										\begin{headthm}[{\autoref{t:equiv_coho_richardson}}]
											\label{thm:listofcovolumepolys}
											The {\rm(}sign-changed{\rm)} double Richardson polynomial $\fR_{w/u}(\ttt,-\sss)$ is a covolume polynomial. 
										\end{headthm}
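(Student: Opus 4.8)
The plan is to deduce the statement from two inputs: the realization of $\fR_{w/u}(\ttt,\sss)$ as a $T$-equivariant class, and our general theorem to the effect that the equivariant class of any torus-invariant subscheme of a vector space is a covolume polynomial whenever the action has its coordinate weights in a strictly convex cone. Recall the first input. The matrix Richardson variety $\fX_{w/u}\subseteq\Mat_{n,n}$ --- the intersection of the matrix Schubert variety of $u$ with an opposite matrix Schubert variety associated with $w$ --- is a $T$-stable closed subscheme, and its class in $\HH^{\bullet}_{T}(\Mat_{n,n})\cong\ZZ[\ttt,\sss]$ is $\fR_{w/u}(\ttt,\sss)=\fS_{u}(\ttt,\sss)\,\fS_{w_{0}w}(\ttt,\sss')$, where $T=(\CC^{*})^{n}\times(\CC^{*})^{n}$ scales rows by its first factor and columns by its second and the entry $x_{ij}$ carries the character $t_{i}-s_{j}$; the reversal $\sss\mapsto\sss'$ is the effect on characters of the column reversal that relates the opposite matrix Schubert variety of $w$ to the matrix Schubert variety of $w_{0}w$. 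This is the equivariant Knutson--Miller formula together with the Richardson factorization, as set up in the preceding section.

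Now I would change signs. Let $\iota\colon T\to T$ invert the second $(\CC^{*})^{n}$; composing the row-and-column action with $\iota$ produces a $T$-action $\star$ on $\Mat_{n,n}$ under which $x_{ij}$ has character $t_{i}+s_{j}$. Since $\iota$ is an automorphism of $T$, a subscheme is $\star$-stable if and only if it is stable for the original action, and the $\star$-equivariant class of a subscheme is obtained from its original class by applying $\iota^{*}$, i.e.\ by the substitution $s_{j}\mapsto-s_{j}$. Hence $\fX_{w/u}$ is $\star$-stable with $\star$-equivariant class $\fR_{w/u}(\ttt,-\sss)$. The weights $\{t_{i}+s_{j}\}$ of the coordinates of $\Mat_{n,n}$ under $\star$ are all made strictly positive by the cocharacter $(1,\dots,1)$, so they span a strictly convex cone and the $\star$-action satisfies the hypothesis of our general theorem; applying that theorem to the $\star$-stable subscheme $\fX_{w/u}$ shows that $\fR_{w/u}(\ttt,-\sss)$ is a covolume polynomial. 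Alternatively, one can bypass matrix Richardson varieties: $\fR_{w/u}(\ttt,-\sss)=\fS_{u}(\ttt,-\sss)\,\fS_{w_{0}w}(\ttt,-\sss')$ is the product of the equivariant classes of two matrix Schubert varieties under positive-weight $T$-actions, each a covolume polynomial by the general theorem, and covolume polynomials are closed under products.

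Given these inputs the proof is short, so I do not expect a genuine obstacle inside this particular argument --- the substance sits in the general theorem, whose proof is the technical core of the paper. Within the present proof the delicate points are purely bookkeeping: matching the sign and normalization conventions for multidegrees, for double Schubert polynomials, and for Aluffi's covolume polynomials (this is exactly why the statement is for $\fR_{w/u}(\ttt,-\sss)$ and not $\fR_{w/u}(\ttt,\sss)$), and verifying that the hypotheses of the general theorem are literally met --- strong convexity of the weight cone, checked above, and $\fX_{w/u}$ being a closed subscheme of $\Mat_{n,n}$, which is how it was constructed.
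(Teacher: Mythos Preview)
Your proposal is correct and follows essentially the same route as the paper: identify $\fR_{w/u}(\ttt,\sss)$ as the $T$-equivariant class of the matrix Richardson variety $D_u^w \subset \Mat_{n,n}$, then apply the general theorem (the paper's \autoref{c:cor_equiv_matrices}) after the sign change $\sss\mapsto -\sss$; your alternative via the product factorization and closure of covolume polynomials under products is exactly the content of the paper's remark following \autoref{t:equiv_coho_richardson}. One small point of precision: the general theorem in the paper is stated for \emph{irreducible} $T$-subvarieties (equivalently, prime homogeneous ideals), not arbitrary $T$-stable subschemes, so you should explicitly note that $D_u^w$ is irreducible when $w\ge u$ --- the paper records this fact just after \autoref{d:matrix_Richardson_Var}.
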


										Dually Lorentzian polynomials enjoy two nice combinatorial properties (see \autoref{prop_disc_log_conc}): their supports are {\rm M}-convex, and they are discretely log-concave.  A homogeneous polynomial $h = \sum_\bn a_\bn \ttt^\bn$ of degree $d$ with nonnegative coefficients is said to have M-convex support if $\supp(h)$ is the set of integer points of a generalized permutohedron in the sense of \cite{Postnikov2009}, and it is said to be discretely log-concave if $a_\bn^2 \geq a_{\bn + \ee_i - \ee_j}a_{\bn - \ee_i + \ee_j}$ for all $\bn \in \NN^n$ and all $1 \le i,j \le n$.
										
										\begin{headcor}[\autoref{cor_polynomials_properties}]
											\label{cor:mconvexanddlc}
											The following polynomials have {\rm M}-convex support and are discretely log-concave:
											\begin{enumerate}[\rm (i)]
												\item {\rm(}sign-changed{\rm)} Double Richardson polynomials $\fR_{w/u}(\ttt,-\sss)$.
												\item Richardson polynomials $\fR_{w/u}(\ttt)$.
												\item {\rm(}sign-changed{\rm)} Double Schubert polynomials $\fS_u(\ttt,-\sss)$.
												\item Schubert polynomials $\fS_u(\ttt)$.
											\end{enumerate}
										\end{headcor}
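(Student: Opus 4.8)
The plan is to derive all four statements from \autoref{t:equiv_coho_richardson} and the combinatorial properties of dually Lorentzian polynomials recorded in \autoref{prop_disc_log_conc}, using the stated specialization identities to reduce the two Schubert cases to the two Richardson cases.

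For part (i) I would argue as follows: by \autoref{t:equiv_coho_richardson}, $\fR_{w/u}(\ttt,-\sss)$ is a covolume polynomial, hence a dually Lorentzian polynomial (covolume polynomials are dually Lorentzian, cf.\ \cite{ALUFFI_COVOL,RSW23}), and therefore \autoref{prop_disc_log_conc} applies and shows it has {\rm M}-convex support and is discretely log-concave. Part (iii) is then immediate as the specialization $w = w_0$ of part (i), since $\fS_u(\ttt,-\sss) = \fR_{w_0/u}(\ttt,-\sss)$.

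For parts (ii) and (iv) I would pass to the substitution $\sss = \mathbf 0$, using $\fR_{w/u}(\ttt) = \fR_{w/u}(\ttt,\mathbf 0)$ and $\fS_u(\ttt) = \fR_{w_0/u}(\ttt,\mathbf 0)$ (so that (iv) is the case $w = w_0$ of (ii)), and the point is to check that setting the $\sss$-variables to zero preserves both properties of $\fR_{w/u}(\ttt,-\sss)$. Discrete log-concavity transfers for free: writing $\fR_{w/u}(\ttt,-\sss) = \sum_{\bn,\bm} c_{(\bn,\bm)}\,\ttt^\bn\sss^\bm$, the coefficient of $\ttt^\bn$ in $\fR_{w/u}(\ttt)$ is $c_{(\bn,\mathbf 0)}$, and the inequalities $c_{(\bn,\mathbf 0)}^2 \ge c_{(\bn,\mathbf 0)+\ee_i-\ee_j}\,c_{(\bn,\mathbf 0)-\ee_i+\ee_j}$ with $i,j$ running over the first $n$ indices form a sub-family of the discrete log-concavity inequalities already established for $\fR_{w/u}(\ttt,-\sss)$. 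For {\rm M}-convexity I would note that $\supp(\fR_{w/u}(\ttt))$ is identified with $\supp(\fR_{w/u}(\ttt,-\sss)) \cap \{\sss = \mathbf 0\}$; since the latter support is the set of lattice points of a generalized permutohedron $P \subseteq \RR^{2n}_{\ge 0}$, this intersection is precisely the lattice points of the face of $P$ minimizing $\sum_i s_i$, and faces of generalized permutohedra are generalized permutohedra \cite{Postnikov2009}; as this face lies in the coordinate subspace spanned by the first $n$ coordinates, all its edges are parallel to vectors $\ee_i - \ee_j$ with $i,j \le n$, so it is a generalized permutohedron in $\RR^n$, which is exactly {\rm M}-convexity of $\supp(\fR_{w/u}(\ttt))$.

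The only delicate step is this last one — confirming that the coordinate restriction $\sss = \mathbf 0$ does not destroy {\rm M}-convexity; the rest is bookkeeping around \autoref{t:equiv_coho_richardson}, \autoref{prop_disc_log_conc}, and the specialization identities. (If instead one knows that covolume, or dually Lorentzian, polynomials are closed under setting variables to zero, then $\fR_{w/u}(\ttt)$ is itself covolume and \autoref{prop_disc_log_conc} applies directly, making the face argument unnecessary; I would check which of these two routes is cleaner given the tools developed earlier in the paper.)
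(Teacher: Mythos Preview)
Your proposal is correct. For parts (i) and (iii) you match the paper exactly: covolume $\Rightarrow$ dually Lorentzian $\Rightarrow$ M-convex support and discrete log-concavity. (One quibble: \autoref{prop_disc_log_conc} as stated in the body of the paper only records discrete log-concavity; the M-convexity of dually Lorentzian polynomials is argued separately in the proof of \autoref{cor_polynomials_properties} via \cite[\S 44.6f]{Schrijver}.)

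For parts (ii) and (iv) your primary argument differs from the paper's. You handle the specialization $\sss=\mathbf{0}$ by hand: discrete log-concavity survives because the relevant inequalities form a subfamily, and M-convexity survives because $\supp(\fR_{w/u}(\ttt))$ is the lattice-point set of a face of the generalized permutohedron underlying $\supp(\fR_{w/u}(\ttt,-\sss))$. This is a valid and self-contained geometric route. The paper instead takes exactly the alternative you mention in your final parenthetical: it proves a general lemma (\autoref{prop_dual_Lorentzian_restrict}) that any $\bw$-truncation of a dually Lorentzian polynomial is again dually Lorentzian, and observes that setting $\sss=\mathbf{0}$ is the truncation with $\bw=(\infty,\ldots,\infty,0,\ldots,0)$. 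That route is shorter and yields the stronger conclusion that $\fR_{w/u}(\ttt)$ is itself dually Lorentzian, not just M-convex and discretely log-concave; your face argument, on the other hand, avoids proving any closure property and works directly from the combinatorics already in hand.
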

										
										We note that the M-convexity in \autoref{cor:mconvexanddlc}(iii) recovers a result of \cite{DOUBLE_SCHUBERT_POLYM} and both the M-convexity and the discrete log-concavity in \autoref{cor:mconvexanddlc}(iv) recover results of \cite{HMMSD}.  
										To the best of our knowledge, the discrete log-concavity of double Schubert polynomials is new.
										
										The proof of \autoref{thm:listofcovolumepolys} follows from a general theorem that we prove in \autoref{s:Equiv_coho_multigraded}.  
										More precisely, we prove the following result regarding the equivariant cohomology classes of torus-equivariant subvarieties of the space of matrices.

										\begin{headthm}[{\autoref{thm_equiv}, \autoref{c:cor_equiv_matrices}}]\label{t:equiv_cohomolgy=multidegree_covolume}
											Let ${\rm Mat}_{m,n} = \CC^{m \times n}$ be the space of $m \times n$ matrices with complex entries and consider the natural action of the torus $T = (\CC^*)^m \times (\CC^*)^n$ given by $(g,h) \cdot M = g \cdot M \cdot h^{-1}$ for all $M \in {\rm Mat}_{m,n}$ and $(g,h) \in T$.
											Let $X \subset {\rm Mat}_{m,n}$ be an irreducible $T$-subvariety and $C_X(t_1,\ldots,t_m,s_1,\ldots,s_n)$ be the polynomial representing the class $[X]^T$ of $X$ in $\HH_T^\bullet\left({\rm Mat}_{m,n}\right) = \ZZ[t_1,\ldots,t_m,s_1,\ldots,s_n]$.
											Then we have that $C_X(t_1,\ldots,t_m,-s_1,\ldots,-s_n)$ is a covolume polynomial.
										\end{headthm}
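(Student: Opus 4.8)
The plan is to translate the statement into multigraded commutative algebra, absorb the sign change $s_j\mapsto-s_j$ into an automorphism of the torus, and then reduce to the assertion that a multidegree with respect to a \emph{positive} multigrading is always a covolume polynomial. First, recall that for a torus $T$ acting linearly on a vector space $V$, the equivariant cohomology $\HH_T^\bullet(V)$ is the polynomial ring on the character group of $T$, which is at the same time the grading group of the coordinate ring $\CC[V]$, and that for a $T$-invariant subvariety $X\subseteq V$ the equivariant class $[X]^T$ equals the multidegree $\cC\big(\CC[V]/I_X\big)$ of the coordinate ring of $X$ for this grading (Knutson--Miller; Miller--Sturmfels). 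For $V=\Mat_{m,n}$ with the action $(g,h)\cdot M=gMh^{-1}$ this grades $\CC[\Mat_{m,n}]=\CC[x_{ij}]$ by $\deg(x_{ij})=t_i-s_j$, so that $C_X(\ttt,\sss)=\cC\big(\CC[x_{ij}]/I_X;\ \ttt,\sss\big)$ as an element of $\ZZ[t_1,\ldots,t_m,s_1,\ldots,s_n]$.

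Next I would dispose of the sign change. The substitution $s_j\mapsto-s_j$ on $\HH_T^\bullet(\mathrm{pt})=\ZZ[\ttt,\sss]$ is precisely the map on characters induced by the torus automorphism $\varphi\colon T\to T$, $(g,h)\mapsto(g,h^{-1})$. Precomposing the action with $\varphi$ produces the new action $(g,h)\cdot M=gMh$, for which $[X]^T$ is represented by $C_X(\ttt,-\sss)$ and the coordinate ring is graded by $\deg(x_{ij})=t_i+s_j$; now every variable lies in a nonzero degree of $\NN^{m+n}$, i.e.\ this is a \emph{positive} $\ZZ^{m+n}$-grading, and in particular $C_X(\ttt,-\sss)$ already has nonnegative coefficients by positivity of multidegrees. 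It therefore suffices to prove the general claim, of which our statement is the case $a_{ij}=\ee_i+\ee_{m+j}$: \emph{if $R=\kk[y_1,\ldots,y_N]$ is $\ZZ^k$-graded with $\deg(y_r)=a_r\in\NN^k\setminus\{0\}$ and $I\subseteq R$ is homogeneous, then $\cC(R/I)$ is a covolume polynomial.}

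To prove this general claim I would invoke that multidegrees are unchanged under Gr\"obner degeneration, so after fixing a term order we may replace $I$ by a monomial initial ideal $J$ and assume $I=J$ is a monomial ideal. Then $\cC(R/J)$ is the sum over the top-dimensional minimal primes $\pp$ of $J$ of $\mult_{\pp}(R/J)\cdot\prod_{y_r\in\pp}\langle a_r,\zz\rangle$, where each $\pp$ is generated by a set of variables and $\langle a_r,\zz\rangle$ is the linear form $\sum_\nu a_{r\nu}z_\nu$ with nonnegative integer coefficients; thus $\cC(R/J)$ is a nonnegative integer combination of products of a fixed number of these linear forms, each product supported on a coordinate subspace, with the multiplicities read off from the Stanley--Reisner complex of $J$. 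The remaining step is to recognize this expression as a covolume polynomial in the sense of Aluffi -- the simplicial complex of $J$, the weight vectors $a_r$, and the multiplicities $\mult_{\pp}(R/J)$ constituting exactly the combinatorial data of such a polynomial -- which yields that $\cC(R/I)=\cC(R/J)$, and hence $C_X(\ttt,-\sss)$, is a covolume polynomial.

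The main obstacle is this last identification: one must match the multidegree of a monomial ideal in a positively multigraded ring with the explicit construction of a covolume polynomial (equivalently, produce the lattice or polytope data whose covolume it computes), and check that the class of covolume polynomials is stable both under additivity over top-dimensional components and under the Gr\"obner degeneration invoked above. A possible alternative to the monomial reduction is to compute $[X]^T$ geometrically for the action $gMh$: pass to a $T$-equivariant projective completion $\PP\big(\bigoplus_{i,j}L_{ij}\oplus\OO\big)$ of $\Mat_{m,n}$, with $L_{ij}$ of weight $\ee_i+\ee_{m+j}$, and expand the class of the closure $\overline X$ via Segre classes; this again yields a covolume polynomial but still requires comparing the two descriptions.
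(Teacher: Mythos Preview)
Your translation to multidegrees and your absorption of the sign change into the torus automorphism $(g,h)\mapsto(g,h^{-1})$ are both correct and match the paper exactly (see \autoref{rem_equiv_multdeg} and \autoref{lem_flip_grading}). The problem is thus correctly reduced to showing that the multidegree of a \emph{prime} ideal in a positively $\NN^p$-graded polynomial ring is a covolume polynomial, which is the paper's \autoref{thm:dually_lor}.

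Your attack on this last step via Gr\"obner degeneration has a real gap. Passing to $\init(I)$ preserves the multidegree but destroys primality, and additivity then writes $\cC(R/J)$ as a nonnegative integer combination of products $\prod_{y_r\in\pp}\langle a_r,\zz\rangle$. Each such product is indeed covolume, but covolume polynomials are \emph{not} closed under nonnegative sums: $t_1^2$ and $t_2^2$ are each covolume, yet $t_1^2+t_2^2$ has non-M-convex support and is therefore not even dually Lorentzian. So the ``stability under additivity over top-dimensional components'' that you flag as the main obstacle is in fact false in general, and your stated general claim for arbitrary homogeneous $I$ is false. Restricting the claim to prime $I$ does not rescue the argument, because your first move already discards primality; one would need to exploit that $J=\init(P)$ for some prime $P$, and nothing in the proposal does so.

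The paper bypasses this by \emph{standardization} rather than degeneration: each variable $x_i$ of degree $a_i\in\NN^p$ is sent to a product $y_{i,1}\cdots y_{i,|a_i|}$ of new variables of standard degrees summing to $a_i$. This homomorphism preserves the multidegree and, crucially, preserves primality when $P$ contains no variable (\autoref{thm_std}(iii),(vi)); the variables lying in $P$ are handled by a short preliminary trick in the proof of \autoref{thm:dually_lor}. One is then left with a prime ideal in a \emph{standard} $\NN^p$-graded polynomial ring, i.e.\ an irreducible subvariety of a product of projective spaces, whose multidegree polynomial is a covolume polynomial by the very definition (\autoref{def_covol}).
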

										
										We point out that the result of \autoref{t:equiv_cohomolgy=multidegree_covolume} is sharp in the sense that for arbitrary torus actions we can find simple instances where the equivariant class of an irreducible $T$-variety is represented by a polynomial which does not even have $M$-convex support (see \autoref{examp_bad_grading}). 
										
										\medskip
										
										We also connect the theory of \emph{Macaulay inverse systems} over $\ZZ$ to the theory of Lorentzian polynomials.
										More precisely, we study the cohomology rings of smooth complex algebraic varieties over $\ZZ$ in terms of a suitable \emph{generic version} of the Macaulay inverse system. 
										In this characteristic-free situation over $\ZZ$, we show that the respective Macaulay dual generator is a denormalized Lorentzian polynomial (see \autoref{thm:mcinvsys} below). 
										
										Over a field of a characteristic zero, say the field of rational numbers $\QQ$, the cohomology ring $\HH^\bullet(X, \QQ)$ of a smooth complex algebraic variety $X$ is a classical example of an Artinian graded Gorenstein algebra (due to the Poincar\'e duality of $\HH^\bullet(X, \QQ)$).
										Therefore, the Macaulay inverse system stands as a powerful tool to study the cohomology ring $\HH^\bullet(X, \QQ)$ because Artinian graded Gorenstein algebras can be defined in terms of the annihilator of a single polynomial in a dual polynomial ring (see, e.g., \cite[\S 21.2]{EISEN_COMM}, \cite[\S 13.4]{Cox2011}).
										Perhaps the most spectacular application of this idea is the result of  Khovanskii and Pukhlikov \cite{KP} describing the cohomology ring of a smooth complete toric variety in terms of the annihilator of the volume polynomial. 
										For more recent applications and related results, the reader is referred to \cite{Kaveh_cohom, Lefschetz_book, Leonid4,Leonid3, Leonid2,Alexandra_HL,Leonid1}.

										Our interest is to study the cohomology ring $\HH^\bullet(X, \ZZ)$ of a smooth complex algebraic variety $X$ as a $\ZZ$-algebra and to find suitable extensions of the aforementioned known results for $\HH^\bullet(X, \QQ)$.
										To substitute the fact that $\HH^\bullet(X, \QQ)$ is an Artinian graded Gorenstein algebra, we should have that the structure morphism $f\colon \Spec\left(\HH^\bullet(X, \ZZ)\right) \rightarrow \Spec(\ZZ)$ is a finite Gorenstein morphism (i.e.; $f$ is a finite flat morphism and all its fibers are Gorenstein rings); if this happens, we say $\HH^\bullet(X, \ZZ)$ is Artinian Gorenstein over $\ZZ$.
										We also need to consider generic versions of the Matlis and Macaulay dual functors. (These generic versions have become of interest recently, see, e.g., \cite{KSmith,SPECIALIZATION_MARC_ARON,kleiman2022macaulay,FIB_FULL}.)
										Let $S = \ZZ[x_1,\ldots,x_n]$ be a positively graded polynomial ring and consider the inverse polynomial ring $T = \ZZ[y_1,\ldots,y_n]$ under the identification $y_i=x_i^{-1}$.
										For any homogeneous ideal $I \subset S$ such that the quotient $R = S/I$ is a \emph{finitely generated flat $\ZZ$-module}, we have a successful \emph{Macaulay inverse system} $I^{\perp_{\ZZ}} = \{G \in T \mid g \cdot G = 0 \text{ for all } g \in I\}$ that can be computed with the \emph{graded Matlis dual} $R^{\vee_\ZZ}={}^*\Hom_\ZZ(R, \ZZ)$. 
										We make the necessary developments of these notions in \autoref{subsect_Gor}.
										Our main result regarding cohomology rings is the following theorem.
										
										\begin{headthm}[{\autoref{thm_cohom_Macaulay}, \autoref{cor:kp}}]\label{thm:mcinvsys}
											Let $X$ be a $d$-dimensional smooth complex algebraic variety. 
											Suppose that the cohomology ring $R = \bigoplus_{i=0}^d\HH^{2i}(X, \ZZ)$ is a flat $\ZZ$-algebra {\rm(}i.e., it is $\ZZ$-torsion-free{\rm)}.
											Let $\rho\colon R_d = \HH^{2d}(X, \ZZ) \rightarrow \ZZ$ be the natural degree map.
											Choose a graded presentation $R \cong  S/I$ where $S = \ZZ[x_1,\ldots,x_n]$, $\delta_i = \deg(x_i) > 0$, and $I \subset S$ is a homogeneous ideal.
											Let $\delta = \delta_1+\cdots+\delta_n$.
											Then the following statements hold: 
											\begin{enumerate}[\rm (i)]
												\item $R$ is Artinian Gorenstein over $\ZZ$.
												\item We have the isomorphisms $\omega_{R/\ZZ} = \Ext_S^n(R, S(-\delta)) \cong {}^*\Hom_\ZZ(R, \ZZ) \cong R(d)$.
												\item Consider the inverse polynomial ring $T = \ZZ[y_1,\ldots,y_n]$ with the identification $y_i = x_i^{-1}$.
												Then the ideal $I \subset S$ is given as the annihilator
												$$
												I \;=\; \lbrace g \in S \mid g \cdot G_R = 0 \rbrace
												$$			
												of the inverse polynomial 
												$$
												G_R(y_1,\ldots,y_n) \;=\; \sum_{\alpha_1\delta_1 + \cdots + \alpha_n\delta_n = d} \rho\big(x_1^{\alpha_1}\cdots x_n^{\alpha_n}\big) \, y_1^{\alpha_1}\cdots y_n^{\alpha_n}  \;\;\in\;\; T = \ZZ[y_1,\ldots,y_n].
												$$		
												\item 
												Assume also that $X$ is complete and that each $x_i$ is equal to the first Chern class $c_1(L_i)$ of a nef line bundle $L_i$ on $X$.  
												Then the normalization 
												$$
												N(G_R) \;\in\; \RR[y_1,\ldots,y_n]
												$$ 
												of $G_R$ is a Lorentzian polynomial. 
												\item Assume also that $X = X_\Sigma$ is a complete toric variety and that each $x_i$ is equal to the class of a torus-invariant nef Cartier divisor $D_i$ on $X$. 
												Let $P_i = P_{D_i}$ be the polytope associated to $D_i$.  Then 
												\[
												G_R(y_1,\ldots,y_n) \;=\; \sum_{\alpha_1 + \cdots + \alpha_n = d} \mathrm{MV}_\alpha(P_1,\ldots,P_n) \, y_1^{\alpha_1}\cdots y_n^{\alpha_n} \;\;\in\;\; T = \ZZ[y_1,\ldots,y_n],
												\]
												where $\mathrm{MV}_\alpha(P_1,\ldots,P_n)$ is the mixed volume of $P_1,\ldots,P_n$ of type $\alpha = (\alpha_1,\ldots,\alpha_n)$.
											\end{enumerate}
										\end{headthm}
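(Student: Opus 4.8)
The plan is to read off statements (i)--(iii) from the formalism of Artinian Gorenstein $\ZZ$-algebras, graded Matlis duality over $\ZZ$, and the Macaulay inverse system over $\ZZ$ developed in \autoref{subsect_Gor}, feeding in Poincar\'e duality over $\ZZ$ as the one geometric ingredient, and then to obtain (iv)--(v) by recognizing $G_R$ as a rescaled volume, resp.\ mixed-volume, polynomial and quoting Br\"and\'en--Huh \cite{LORENTZIAN} and the classical toric intersection formula. For (i) and (ii): since $X$ is smooth and complete it is a compact oriented real $2d$-manifold, so cup product gives pairings $\HH^k(X,\ZZ)\times\HH^{2d-k}(X,\ZZ)\to\HH^{2d}(X,\ZZ)\cong\ZZ$ which are perfect modulo torsion; restricted to even degrees, where $R$ is $\ZZ$-free by hypothesis, these become perfect pairings of finite free $\ZZ$-modules $R_i\otimes_\ZZ R_{d-i}\to R_d$ with $R_d\cong\ZZ$ via $\rho$, or equivalently $r\mapsto(r'\mapsto\rho((rr')_d))$ is an isomorphism of graded $R$-modules $R(d)\cong{}^*\Hom_\ZZ(R,\ZZ)$. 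I would then quote from \autoref{subsect_Gor} that a graded $\ZZ$-algebra which is finite and flat as a $\ZZ$-module and whose graded Matlis dual is cyclic is Artinian Gorenstein over $\ZZ$, giving (i); for (ii), $R$ is finite flat over $\ZZ$, hence Cohen--Macaulay of Krull dimension $1$, so $\Spec R\hookrightarrow\Spec S$ has codimension $n$, $\Ext^i_S(R,S)=0$ for $i\neq n$, and $\Ext^n_S(R,S(-\delta))$ is the relative canonical module $\omega_{R/\ZZ}$ (using $\omega_{S/\ZZ}=S(-\delta)$); graded relative duality over $\ZZ$ (\autoref{subsect_Gor}) identifies this with ${}^*\Hom_\ZZ(R,\ZZ)$, and the Poincar\'e duality isomorphism above identifies the latter with $R(d)$.

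For (iii), the inverse-system dictionary of \autoref{subsect_Gor} identifies $I^{\perp_\ZZ}\subseteq T$ with ${}^*\Hom_\ZZ(R,\ZZ)$ as graded $S$-modules and shows the system is successful, i.e.\ $I=\Ann_S(I^{\perp_\ZZ})$; by (ii) this $S$-module is cyclic, so $I^{\perp_\ZZ}=S\cdot G$ for a single homogeneous $G\in T_{-d}$ and $I=\Ann_S(G)$. To identify $G$: under the perfect pairing $S_d\times T_{-d}\to\ZZ$ for which $\{y^\alpha\}$ is the basis dual to $\{x^\alpha\}$, an element of $T_{-d}$ is a functional on $S_d$, and the generator of $\omega_{R/\ZZ}\cong R(d)$ corresponds to $s\mapsto\rho(\overline s)$, that is, to $\sum_{\alpha_1\delta_1+\cdots+\alpha_n\delta_n=d}\rho(x_1^{\alpha_1}\cdots x_n^{\alpha_n})\,y_1^{\alpha_1}\cdots y_n^{\alpha_n}=G_R$. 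Alternatively, one checks $\Ann_S(G_R)=I$ directly: for homogeneous $g$ of degree $e$ and any $\gamma$ with $\gamma_1\delta_1+\cdots+\gamma_n\delta_n=d-e$, the $y^\gamma$-coefficient of $g\cdot G_R$ is $\rho(\overline g\cdot\overline{x^\gamma})$, so $g\cdot G_R=0$ iff $\overline g\cdot R_{d-e}\subseteq\ker\rho=0$ iff $\overline g=0$ (perfectness of the pairing) iff $g\in I$.

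In (iv) and (v) each generator has $\delta_i=\deg c_1(L_i)=1$, so by (iii) the coefficients of $G_R$ are the intersection numbers $\rho(x_1^{\alpha_1}\cdots x_n^{\alpha_n})=(L_1^{\alpha_1}\cdots L_n^{\alpha_n})$, which are nonnegative when the $L_i$ are nef; hence $G_R$ has nonnegative coefficients and
\[
N(G_R)=\sum_{\alpha_1+\cdots+\alpha_n=d}\frac{(L_1^{\alpha_1}\cdots L_n^{\alpha_n})}{\alpha_1!\cdots\alpha_n!}\,y_1^{\alpha_1}\cdots y_n^{\alpha_n}=\frac{1}{d!}\,\vol_X(y_1L_1+\cdots+y_nL_n).
\]
The volume polynomial of nef classes is Lorentzian by Br\"and\'en--Huh \cite{LORENTZIAN} for $X$ projective, and for a general complete $X$ by pulling back along a projective birational morphism $X'\to X$ furnished by Chow's lemma (which preserves nefness and the intersection numbers); since the Lorentzian property is preserved under scaling by $1/d!>0$, this gives (iv). For (v) the same computation gives $G_R=\sum_{\alpha_1+\cdots+\alpha_n=d}(D_1^{\alpha_1}\cdots D_n^{\alpha_n})\,y_1^{\alpha_1}\cdots y_n^{\alpha_n}$, and the classical identification of toric intersection numbers of nef torus-invariant Cartier divisors with mixed volumes of their polytopes, $(D_1^{\alpha_1}\cdots D_n^{\alpha_n})=\mathrm{MV}_\alpha(P_1,\ldots,P_n)$, yields the displayed formula; together with (iii) this is the characteristic-free extension over $\ZZ$ of the Khovanskii--Pukhlikov theorem \cite{KP}.

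The main obstacle is the step producing (i)--(ii): one must match topological Poincar\'e duality over $\ZZ$ with the ring-theoretic notion of being Artinian Gorenstein over $\ZZ$ and with the graded Matlis and local duality formalism over the base $\ZZ$, and recognize that $\ZZ$-freeness of the \emph{even} cohomology --- odd-degree torsion being invisible to $R$ --- is exactly what upgrades the perfect Poincar\'e pairing from $\QQ$ to $\ZZ$. Flatness of $R$ together with Gorensteinness of the generic fiber $R\otimes_\ZZ\QQ$ alone would not force $R$ to be Gorenstein over $\ZZ$, as small examples of flat $\ZZ$-algebras with non-Gorenstein special fibers show, so the integral pairing really is needed. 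Granting this, (iii) is a formal consequence of the inverse-system dictionary, and (iv)--(v) reduce to the cited facts about Lorentzian volume polynomials and toric mixed volumes.
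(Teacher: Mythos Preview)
Your proposal is correct and follows essentially the same approach as the paper: Poincar\'e duality over $\ZZ$ feeds into \autoref{thm_rel_Gor} for (i)--(iii), $N(G_R)$ is identified with a rescaled volume polynomial and handled via Chow's lemma plus \cite[Theorem~4.6]{LORENTZIAN} for (iv), and the toric intersection-number--equals--mixed-volume formula \cite[Theorem~13.4.3]{Cox2011} gives (v). The only minor differences are that the paper obtains the Ext isomorphisms in (ii) by invoking the generic local duality theorem \cite[Theorem~A]{FIB_FULL} rather than your direct Cohen--Macaulayness argument, and you make explicit the completeness needed for Poincar\'e duality that the paper's statement of (i)--(iii) leaves tacit.
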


										Motivated by \autoref{thm:mcinvsys}, we say that $G_R \in T = \ZZ[y_1,\ldots,y_n]$ is the \emph{Macaulay dual generator} of the cohomology ring $R = \HH^\bullet(X, \ZZ)$ over $\ZZ$.
										We perform several explicit computations with examples in \autoref{subset_cohom_ring}.
										
										\bigskip
										
										\noindent
										\textbf{Outline.}
										The basic outline of this paper is as follows. 									In \autoref{sect_prelim}, we recall some basic results and we fix the notation.	
										In \autoref{sec_pos_gradings}, we study the multidegree polynomial of prime ideals in positive but not necessarily standard gradings. 
										We prove \autoref{t:equiv_cohomolgy=multidegree_covolume} in \autoref{s:Equiv_coho_multigraded}.
										The proofs of \autoref{thm:listofcovolumepolys} and \autoref{cor:mconvexanddlc} are given in \autoref{sect_equiv_cohom_Richardson}.
										Finally, \autoref{sect_cohom_rings} contains the proof of \autoref{thm:mcinvsys}.
										
										\section{Preliminaries}
										\label{sect_prelim}
										
										In this section, we set up the notation used throughout the paper. We also present some preliminary results necessary to prove our main theorems.

										\subsection{Double Schubert polynomials}
										
										Let $p \ge 1$ be a positive integer.
										Let $[p] := \{1,2,\dots, p\}$ and $\binom{[p]}{k}$ denote the set of size $k$ subsets of $[p]$. 
										For $I= \{i_1<i_2<\cdots<i_k\}$, $J = \{j_1<j_2<\cdots<j_k\}\in \binom{[p]}{k}$, we define a partial order on $\binom{[p]}{k}$ where $I\leq J$ if $i_m\leq j_m$ for all $m\in [k]$. 
										We write $S_p$ for the symmetric group of permutations on $[p]$. 
										For any $w\in S_p$, we use one-line notation, i.e., $w = (a_1,a_2,\dots,a_p)$ with $w(i)= a_i$. 
										For any $k\in [p]$, we write $w[k]:=\{w(1),w(2),\dots,w(k)\}$. 
										We say $(i,j)$ is an \emph{inversion} of $w$ if $1\leq i <j\leq p$ and $w(i)>w(j)$. 
										For every $i\in [p-1] = \{1,\ldots, p-1\}$, we have the transposition $\mathfrak{s}_i = (i, i+1) \in S_p$.
										The \emph{length} $\ell(w)$ of a permutation $w$ is the number of inversions of $w$ or, equivalently, the minimum number of transpositions $\mathfrak{s}_i$ of which $w$ can be written as a product. 
										
										We equip $S_p$ with a partial order structure known as \emph{Bruhat order}.  For comprehensive references on Bruhat order and related topics, we point to \cite{H90,BB05}.
										
										\begin{definition}
											For $u,w\in S_p$, we say $w\geq u$ in Bruhat order if $w[i] \geq u[i]$ for all $i\in [p]$. 
										\end{definition}

										Double Schubert polynomials were introduced by Lascoux and Sch\"utzenberger \cite{LS_Schubert} and can be defined via divided difference operators.
										\begin{definition}
											The $i$-th divided difference operator $\partial_i$ takes each polynomial $f\in \ZZ[t_1,\dots,t_p]$ to 
											\[
											\partial_i f(t_1,\dots,t_p) \;=\; \frac{f(t_1,\dots,t_p)-f(t_1,\dots,t_{i-1}, t_{i+1},t_i,t_{i+2}\dots,t_p)}{t_i-t_{i+1}}.
											\]
											The \emph{Schubert polynomial} for $w \in S_p$ is defined by the recursion
											\[
											\mathfrak{S}_w(t_1,\dots,t_p) = \partial_i\mathfrak{S}_{w\mathfrak{s}_i}(t_1,\dots,t_p)
											\]
											whenever $\ell(w)<\ell(w\mathfrak{s}_i)$, with initial data $\mathfrak{S}_{w_0}(t_1,\ldots,t_p) = \prod_{i=1}^{p-1}t_i^{p-i}\in \ZZ[t_1,\dots,t_p]$. 
											The \emph{double Schubert polynomials} $\mathfrak{S}_w(\ttt,\sss)$ are defined by the same recursion where the divided difference operators only act on the $t_i$'s, but starting from $\mathfrak{S}_{w_0}(\ttt,\sss)= \prod_{i+j\leq p}(t_i-s_j)\in \ZZ[t_1,\dots,t_p,s_1,\dots,s_p]$.
										\end{definition}

										\subsection{Multidegrees}
										Here we recall the definition and basic properties of multidegrees as presented in \cite{KNUTSON_MILLER_SCHUBERT}  and \cite[Chapter 8]{MS}.
										
										Let $R = \kk[x_1,\ldots,x_n]$ be a $\ZZ^p$-graded polynomial ring over a field $\kk$. 
										Let $M$ be a finitely generated $\ZZ^p$-graded $R$-module and $F_\bullet$ be a $\ZZ^p$-graded free $R$-resolution 
										$
										F_\bullet\colon \; \cdots \rightarrow F_i \rightarrow F_{i-1} \rightarrow \cdots \rightarrow F_1 \rightarrow F_0
										$
										of $M$.
										Let $t_1,\ldots,t_p$ be variables over $\ZZ$ and consider the Laurent polynomial ring $\ZZ[\ttt, \mathbf{t^{-1}}] = \ZZ[t_1,\ldots,t_p,t_1^{-1},\ldots,t_p^{-1}]$, where the variable $t_i$ corresponds with the $i$-th elementary vector $\ee_i \in \ZZ^p$. 
										If we write $F_i = \bigoplus_{j} R(-\mathbf{b}_{i,j})$ with $\mathbf{b}_{i,j} = (\mathbf{b}_{i,j,1},\ldots,\mathbf{b}_{i,j,p}) \in \ZZ^p$, then we define the Laurent polynomial 
										$
										\left[F_i\right]_\ttt \, := \, \sum_{j} \ttt^{\mathbf{b}_{i,j}} = \sum_{j} t_1^{\mathbf{b}_{i,j,1}} \cdots t_p^{\mathbf{b}_{i,j,p}}.
										$
										
										\begin{definition}
											The \emph{$K$-polynomial} of $M$ is the Laurent polynomial given by 
											$$
											\mathcal{K}(M;\ttt) \; := \; \sum_{i} {(-1)}^i \left[ F_i \right]_\ttt.
											$$
										\end{definition}
										
										We have that, even if the grading of $R$ is non-positive and we do not have a well-defined notion of Hilbert series, the above definition of $K$-polynomial is an invariant of the module $M$ and it does not depend on the chosen free $R$-resolution $F_\bullet$ (see \cite[Theorem 8.34]{MS}).
										
										\begin{definition}
											The \emph{multidegree polynomial} of $M$ is the homogeneous polynomial $\mathcal{C}(M; \ttt) \in \ZZ[\ttt]$ given as the sum of all terms in 
											$\mathcal{K}(M; \mathbf{1} - \ttt) = \mathcal{K}(M; 1-t_1,\ldots,1-t_p)$ having total degree $\codim(M)$, which is the lowest degree appearing.
										\end{definition}

										\subsection{Lorentzian and covolume polynomials}\label{sec:lorandcovol}
										In this subsection, we briefly recall Lorentzian polynomials, dually Lorentzian polynomials, and covolume polynomials.
										
										A subset $J \subset \NN^p$ is called \emph{$\mathrm{M}$-convex} if for any $q = (q_1,\dots,q_p)$ and $r = (r_1,\dots,r_p)$ in $J$, and any $i$ where $q_i < r_i$, there exists $j$ such that $q_j> r_j$ and both points $q + \ee_i - \ee_j$ and $r -\ee_i + \ee_j$ are also contained in $J$. 
										We point out that M-convex sets are equivalent to basis elements of \emph{discrete polymatroids} \cite{Murota_Discrete_Convex_Analysis, Schrijver} and integer points of \emph{generalized permutohedra} \cite{Postnikov2009}. 
										A polynomial with M-convex support has \emph{saturated Newton polytope} (SNP). 
										In \cite{MTY2019}, Monical, Tokcan, and Yong conjectured that many polynomials in algebraic combinatorics have saturated Newton polytopes.
										
										Let $h(t_1,\dots,t_p)$ be a homogeneous polynomial of degree $d$ in $\RR[\ttt]=\RR[t_1,\ldots,t_p]$.
										\begin{definition}[\cite{LORENTZIAN}]
											The homogeneous polynomial $h$ is called \emph{Lorentzian} if the following conditions hold:
											\begin{enumerate}[\rm (i)]
												\item The coefficients of $h$ are nonnegative.
												\item The support of $h$ is M-convex.
												\item The quadratic form $\frac{\partial}{\partial t_{i_1}}\frac{\partial}{\partial t_{i_2}}\cdots \frac{\partial}{\partial t_{i_e}}h$ has at most one positive eigenvalue for any $i_1,\ldots,i_e\in [p]$ where $e = d-2$.
											\end{enumerate}		
										\end{definition}
										There are several linear operators discussed in \cite{LORENTZIAN} that preserve the Lorentzian property. In particular, the normalization operator 
										$$
										N\left(\sum_{\bn} a_\bn\ttt^\bn\right) \; := \;\sum_{\bn} \frac{a_\bn}{\bn!}\ttt^\bn \qquad \text{ where }  \qquad \bn! := n_1!\cdots n_p!,
										$$ 
										preserves the Lorentzian property \cite[Corollary 3.7]{LORENTZIAN}.

										In \cite{ALUFFI_COVOL}, Aluffi defined the notion of covolume polynomials. 
										These polynomials arise by considering the Chow classes of irreducible subvarieties of a product of projective spaces.
										
										\begin{definition}[\cite{ALUFFI_COVOL}]
											\label{def_covol}
											Let $\PP = \PP_\kk^{m_1}\times_\kk\cdots\times_\kk\PP_{\kk}^{m_p}$ be a multiprojective space over a field $\kk$.
											Let $X \subset \PP$ be an irreducible subvariety of codimension $c$. 
											The class of $X$ can be written as 
											$$
											[X] \;=\; \sum_{|\bn| = c}a_\bn \, H_1^{n_1}\cdots H_p^{n_p} \;\in\; A^\bullet(\PP) = \ZZ[H_1,\dots, H_p]/\left( H_1^{m_1+1},\dots, H_p^{m_p+1}\right).
											$$
											Let $P_{[X]}(t_1,\ldots,t_p) = \sum_{|\bn| = c}a_\bn\ttt^{\bn}$ be the polynomial associated to the class $[X]$ of $X \subset \PP$.
											A polynomial $P(t_1,\dots,t_p) \in \RR[t_1,\ldots,t_p]$ with nonnegative real coefficients is a \emph{covolume polynomial}
											if it is a limit of polynomials of the form $cP_{[X]}$ for a positive real number $c$ and an irreducible subvariety $X$ of $\PP$.
										\end{definition}

										We are interested in the family of dually Lorentzian polynomials introduced by Ross, S\"u\ss, and Wannerer \cite{RSW23}.
										
										\begin{definition}[\cite{RSW23}]
											A polynomial $h\in \RR[t_1,\dots, t_p]$ is \emph{dually Lorentzian} if 
											$$
											N\left(t_1^{m_1}\cdots t_p^{m_p} h\left(t_1^{-1},\dots,t_p^{-1}\right)\right)
											$$ 
											is Lorentzian for some $\bm = (m_1,\ldots,m_p)$ sufficiently large (i.e., such that $t_1^{m_1}\cdots t_p^{m_p} h\left(t_1^{-1},\dots,t_p^{-1}\right)$ is a polynomial).
										\end{definition}
										
										As shown by Aluffi \cite[Proposition 2.8]{ALUFFI_COVOL}, covolume polynomials form a subfamily of the family of dually Lorentzian polynomials.
										
										\begin{remark}
											\label{rem_dual_Lorentzian_monomial_product}
											For any monomial $x_1^{\alpha_1}\cdots x_p^{\alpha_n}$ and any polynomial $h \in \RR[t_1,\ldots,t_p]$, we have that $h$ is dually Lorentzian if and only if $x_1^{\alpha_1}\cdots x_p^{\alpha_n}h$ is dually Lorentzian.  
										\end{remark}
										\begin{proof}
											This follows directly from \cite[Lemma 7]{HMMSD}.
										\end{proof}
										
										Finally, we introduce the following definition.
										
										\begin{definition}
											For any polynomial $h = \sum_\bn a_\bn\ttt^\bn \in \RR[t_1,\ldots,t_p]$ and any $\bw \in \NN^p$, the \emph{$\bw$-truncation} of $h$ is the polynomial given by 
											$
											\sum_{\bn \le \bw} a_\bn\ttt^\bn \in \RR[t_1,\ldots,t_p].
											$
										\end{definition}
										
										\section{Multidegree polynomials of prime ideals in non-standard gradings}\label{sec_pos_gradings}
										
										In this section, working over an arbitrary positive $\NN^p$-grading, we show that the multidegree polynomial of a prime ideal is a covolume polynomial.
										Our main tool is the technique of \emph{standardization} that was used in \cite{DOUBLE_SCHUBERT_POLYM, MULTDEG_NON_STD}.
										
										Let $\kk$ be an algebraically closed field and $R = \kk[x_1,\ldots,x_n]$ be a \emph{positively $\NN^p$-graded} polynomial ring (that is, $\deg(x_i) \in \NN^p \setminus \{\mathbf{0}\}$ for all $1 \le i \le n$ and $\deg(\alpha) = \mathbf{0} \in \NN^p$ for all $\alpha \in \kk$).  
										Let $M$ be a finitely generated $\ZZ^p$-graded $R$-module.
										Since $R$ is assumed to be positively graded, there is a well-defined notion of Hilbert series 
										$$
										\Hilb_M(\ttt) \;=\; \Hilb_M(t_1,\ldots,t_p)  \;:= \; \sum_{\bn \in \ZZ^p} \dim_\kk\left(\left[M\right]_\bn\right) \ttt^\bn = \sum_{\bn = (n_1,\ldots,n_p) \in \ZZ^p}\dim_\kk\left(\left[M\right]_\bn\right) t_1^{n_1}\cdots t_p^{n_p},
										$$
										and then we can write
										$$
										\Hilb_M(\ttt) \;= \;  \frac{\mathcal{K}(M;\ttt)}{\prod_{i=1}^n \left(1 - \ttt^{\deg(x_i)}\right)}.
										$$

										Below we discuss the case of a standard multigrading.
										This case is of special importance since it deals with closed subschemes of a product of projective spaces.

										\begin{remark}[Standard multigradings]
											\label{rem_std_grading}
											Assume that $R$ has a standard $\NN^p$-grading (i.e., $|\deg(x_i)|=1$) and that $R$ is the coordinate ring of $\PP = \PP_\kk^{m_1} \times_\kk \cdots \times_\kk \PP_\kk^{m_p}$.
											Let $X \subset \PP$ be a $d$-dimensional integral closed subscheme with coordinate ring $R/P$, where $P$ is an $R$-homogeneous prime ideal.
											The class of $X$ in the Chow ring of $\PP$ is given by
											$$
											\left[X\right] \;=\; \sum_{\substack{\bn=(n_1,\ldots,n_p)\in \NN^p\\ |\bn|=n_1+\cdots+n_p=d}} \deg_\PP^\bn(X)\cdot H_1^{m_1-n_1}\cdots H_p^{m_p-n_p}  \;\in\;  A^\bullet(\PP) = \frac{\ZZ[H_1,\ldots,H_p]}{\big(H_1^{m_1+1},\ldots, H_p^{m_p+1}\big)},
											$$
											where $H_i$ represents the class of the inverse image of a hyperplane of $\PP_\kk^{m_i}$.
											We say that $\deg_\PP^\bn(X)$ is the multidegree of $X$ of type $\bn$.
											Then the multidegree polynomial of $R/P$ is given by 
											$$
											\mathcal{C}(R/P; \ttt) \;=\; \sum_{\bn\in \NN^p,\; |\bn|=d} \deg_\PP^\bn(X)\cdot t_1^{m_1-n_1}\cdots t_p^{m_p-n_p}.
											$$
											The \emph{volume polynomial} of $X$ (see \cite[\S 4.2]{LORENTZIAN}) is given by 
											$$
											{\rm vol}_X(\ttt) \;=\; \int\big(H_1t_1 + \cdots + H_pt_p\big)^{d} \cap \left[X\right] \;=\; \sum_{\bn\in \NN^p,\; |\bn|=d} \deg_\PP^\bn(X)\cdot\frac{d!}{n_1!\cdots n_p!} \cdot t_1^{n_1}\cdots t_p^{n_p}.  
											$$
											Due to a fundamental result of Br\"and\'en and Huh (see \cite[Theorem 4.6]{LORENTZIAN}), we have that $\vol_X(\ttt)$ is a Lorentzian polynomial.
										\end{remark}
										
										The following lemma tells us that multidegree polynomials in a standard multigrading are \emph{dually Lorentzian} \cite{RSW23}.
										This result already appeared in \cite[Proposition 2.8]{ALUFFI_COVOL} (also, see \cite[Theorem 6]{HMMSD}), but here we give a short self-contained proof.
										
										\begin{lemma}
											\label{lem_std_grading}
											Keep the same notations and assumptions of \autoref{rem_std_grading}.
											Consider the polynomial
											$$
											F(t_1,\ldots,t_p) \;=\; t_1^{m_1}\cdots t_p^{m_p} \cdot \mathcal{C}\Big(R/P; \frac{1}{t_1}, \ldots, \frac{1}{t_p}\Big).
											$$
											Then the normalization $N(F)$ is a Lorentzian polynomial {\rm(}i.e., $\mathcal{C}(R/P;\ttt)$ is dually Lorentzian{\rm)}.
										\end{lemma}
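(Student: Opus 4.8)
The plan is to compute $F$ explicitly and to recognize its normalization as a positive rescaling of the volume polynomial $\vol_X$, which is already known to be Lorentzian.

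First I would carry out the substitution and simplification. Starting from the formula for $\mathcal{C}(R/P;\ttt)$ recorded in \autoref{rem_std_grading}, replacing each $t_i$ by $1/t_i$ turns the monomial $t_1^{m_1-n_1}\cdots t_p^{m_p-n_p}$ into $t_1^{-(m_1-n_1)}\cdots t_p^{-(m_p-n_p)}$, and multiplying through by $t_1^{m_1}\cdots t_p^{m_p}$ yields
\[
F(\ttt) \;=\; \sum_{\bn\in\NN^p,\; |\bn|=d} \deg_\PP^\bn(X)\cdot t_1^{n_1}\cdots t_p^{n_p}.
\]
Since each multidegree $\deg_\PP^\bn(X)$ is a nonnegative integer and vanishes unless $0 \le n_i \le m_i$ for all $i$, this $F$ is an honest polynomial with nonnegative coefficients, so $\bm = (m_1,\ldots,m_p)$ is an admissible choice in the definition of ``dually Lorentzian'' (and by \autoref{rem_dual_Lorentzian_monomial_product} the particular choice of $\bm$ is immaterial).

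Next I apply the normalization operator termwise:
\[
N(F)(\ttt) \;=\; \sum_{\bn\in\NN^p,\; |\bn|=d} \frac{\deg_\PP^\bn(X)}{n_1!\cdots n_p!}\cdot t_1^{n_1}\cdots t_p^{n_p} \;=\; \frac{1}{d!}\,\vol_X(\ttt),
\]
where the last equality is just the explicit formula for $\vol_X$ from \autoref{rem_std_grading}. By the theorem of Br\"and\'en and Huh (\cite[Theorem 4.6]{LORENTZIAN}, as recalled in \autoref{rem_std_grading}) the polynomial $\vol_X$ is Lorentzian, and multiplication by the positive scalar $1/d!$ preserves the Lorentzian property: the coefficients stay nonnegative, the support is unchanged, and rescaling a quadratic form by a positive constant does not alter its signature. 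Hence $N(F)$ is Lorentzian, which is exactly the assertion that $\mathcal{C}(R/P;\ttt)$ is dually Lorentzian. There is no serious obstacle in this argument; the only points requiring a little care are the exponent bookkeeping in the substitution $t_i \mapsto 1/t_i$ and the observation that $N(F)$ and $\vol_X$ differ only by the positive constant $d!$, so that the Lorentzian property transfers.
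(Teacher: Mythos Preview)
Your proof is correct and follows essentially the same approach as the paper: both compute $F$ explicitly from the formula in \autoref{rem_std_grading}, identify $N(F)$ with $\frac{1}{d!}\vol_X$, and then invoke \cite[Theorem 4.6]{LORENTZIAN}. Your additional remarks about nonnegativity of the coefficients and stability of the Lorentzian property under positive scaling are fine elaborations but do not change the underlying argument.
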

										\begin{proof}
											From \autoref{rem_std_grading}, we derive the following equalities 
											\begin{align*}
												N(F(\ttt)) &\;=\; N\Bigg(t_1^{m_1}\cdots t_p^{m_p} \cdot \mathcal{C}\Big(R/P; \frac{1}{t_1}, \ldots, \frac{1}{t_p}\Big)\Bigg) \\
												&\;=\; N\Bigg(t_1^{m_1}\cdots t_p^{m_p} \cdot \sum_{\bn\in \NN^p,\; |\bn|=d} \deg_\PP^\bn(X)\cdot \frac{1}{t_1^{m_1-n_1}}\cdots \frac{1}{t_p^{m_p-n_p}}\Bigg) \\
												&\;=\; N\Bigg( \sum_{\bn\in \NN^p,\; |\bn|=d} \deg_\PP^\bn(X)\cdot {t_1^{n_1}}\cdots {t_p^{n_p}}\Bigg) \\
												&\;=\; \sum_{\bn\in \NN^p,\; |\bn|=d} \deg_\PP^\bn(X)\cdot\frac{1}{n_1!\cdots n_p!} \cdot t_1^{n_1}\cdots t_p^{n_p}\\
												&\;=\; \frac{1}{d!} \cdot \vol_X(\ttt).
											\end{align*}
											Therefore \cite[Theorem 4.6]{LORENTZIAN} implies that $N(F)$ is Lorentzian.
										\end{proof}
										
										We now describe some basic properties of the process of standardization as developed in \cite{DOUBLE_SCHUBERT_POLYM, MULTDEG_NON_STD}.
										For the rest of the section, the following setup is fixed.

										\begin{setup}
											\label{setup_std}
											For $1 \le i \le n$, let $\ell_i = |\deg(x_i)|$ be the total degree of the variable $x_i$.
											Let
											$$
											S \;= \; \kk\left[y_{i,j} \mid 1 \le i \le n \text{ and } 1 \le j \le \ell_i\right]
											$$
											be a standard $\NN^p$-graded polynomial ring such that
											$$
											\deg(x_i) \;=\; \sum_{j = 1}^{\ell_i} \deg(y_{i,j})   \quad  \text{for all \;\;$1 \le i \le n$.}
											$$
											We define the $\NN^p$-graded $\kk$-algebra homomorphism 
											\begin{equation*}
												\phi\colon R=\kk[\xx] \longrightarrow S = \kk[\mathbf{y}], \quad
												\phi(x_{i}) = y_{i,1}y_{i,2}\cdots y_{i,\ell_i}.
											\end{equation*}
											For an $R$-homogeneous ideal $I \subset R$, we say that the extension  $J = \phi(I) S$ is the \emph{standardization} of $I$, since $J \subset S$ is an $S$-homogeneous ideal in the standard $\NN^p$-graded polynomial ring $S$.
											By a slight abuse of notation, we consider both multidegree polynomials $\mathcal{C}(R/I;\ttt)$ and $\mathcal{C}(S/J;\ttt)$ as elements of the same polynomial ring $\ZZ[\ttt]=\ZZ[t_1,\ldots,t_p]$.
										\end{setup}

										The next theorem contains some of the basic and desirable properties of the standardization of an ideal.
										
										\begin{theorem}[{\cite[Theorem 7.2]{MULTDEG_NON_STD}, \cite[Proposition 4.2]{DOUBLE_SCHUBERT_POLYM}}]
											\label{thm_std}
											Let $I \subset R$ be an $R$-homogeneous ideal and $J = \phi(I)S$ be its standardization. 
											Then the following statements hold:
											\begin{enumerate}[\rm (i)]
												\item $\codim(I) = \codim(J)$.
												\item $I \subset R$ and $J \subset S$ have the same $\NN^p$-graded Betti numbers. 
												\item $\mathcal{K}(R/I;\ttt) = \mathcal{K}(S/J;\ttt)$ and  $\mathcal{C}(R/I;\ttt) = \mathcal{C}(S/J;\ttt)$.
												\item $R/I$ is a Cohen--Macaulay ring if and only if $S/J$ is a Cohen--Macaulay ring.
												\item Let $>$ be a monomial order on $R$ and $>'$ be a monomial order on $S$ which is compatible with $\phi$ {\rm(}i.e.,~if $f,g \in R$ with $f > g$, then $\phi(f) >' \phi(g)${\rm)}.
												Then $\init_{>'}(J) = \phi(\init_{>}(I))S$.
												\item If $I \subset R$ is a prime ideal and it does not contain any variable, then $J \subset S$ is also a prime ideal.
											\end{enumerate}
										\end{theorem}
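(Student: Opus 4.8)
The plan is to derive all six parts from a single structural observation: the homomorphism $\phi$ exhibits $S$ as an $\NN^p$-graded \emph{free} (in particular faithfully flat) $R$-module. I would prove this first. For a fixed $i$, every monomial in $\kk[y_{i,1},\dots,y_{i,\ell_i}]$ factors uniquely as a power of $\prod_{j}y_{i,j}$ times a ``reduced'' monomial $y_{i,1}^{a_1}\cdots y_{i,\ell_i}^{a_{\ell_i}}$ with $\min_j a_j=0$, so these reduced monomials form a homogeneous basis of $\kk[y_{i,1},\dots,y_{i,\ell_i}]$ over its subring $\kk[\prod_j y_{i,j}]$; tensoring these bases over $\kk$ across $1\le i\le n$ writes $S=\bigoplus_\alpha R(-\mathbf{c}_\alpha)$ as a graded free $R$-module via $\phi$. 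In particular $S\otimes_R(-)$ is exact, and the fibres of $\Spec S\to\Spec R$ are equidimensional of the same dimension $\sum_i(\ell_i-1)$ at every point.

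Given this, parts (i)--(iv) become essentially formal. Take a minimal $\NN^p$-graded free resolution $F_\bullet\to R/I$; flatness of $S$ makes $F_\bullet\otimes_R S$ an exact complex with $H_0=(R/I)\otimes_R S=S/\phi(I)S=S/J$, hence a free $S$-resolution of $S/J$; it stays minimal because the differentials of $F_\bullet$ have entries in $\mathfrak{m}_R$ and $\phi(\mathfrak{m}_R)S\subseteq\mathfrak{m}_S$; and $F_i=\bigoplus_j R(-\mathbf{b}_{i,j})$ becomes $\bigoplus_j S(-\mathbf{b}_{i,j})$, so the $\NN^p$-graded Betti numbers agree, giving (ii). Then $[F_i]_\ttt=[F_i\otimes_R S]_\ttt$ yields $\mathcal{K}(R/I;\ttt)=\mathcal{K}(S/J;\ttt)$, and since $\mathcal{C}$ is by definition the lowest-degree part of $\mathcal{K}(-;\mathbf{1}-\ttt)$ and $\codim$ is exactly that lowest degree, parts (iii) and (i) both follow (alternatively, (i) follows from $\dim S=\dim R+\sum_i(\ell_i-1)$ and the analogous identity for $S/J$). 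For (iv), Auslander--Buchsbaum gives that $R/I$ is Cohen--Macaulay iff $\pd_R(R/I)=\codim(I)$, and both the projective dimension (length of the minimal resolution) and the codimension are transported by (ii) and (i).

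For (v) I would argue with Gröbner bases. Because $\phi$ is injective on monomials and $>'$ is compatible with $\phi$, one gets $\init_{>'}(\phi(f))=\phi(\init_{>}(f))$ for all $f\in R$, hence $\phi(\init_{>}(I))S\subseteq\init_{>'}(J)$. For the reverse inclusion I would invoke the (flatness-based) identity of part (iii) applied to the monomial ideal $\init_{>}(I)$ together with the standard fact $\mathcal{K}(R/I)=\mathcal{K}(R/\init_{>}(I))$ for positively graded rings (where $\mathcal{K}=\Hilb\cdot\prod_i(1-\ttt^{\deg x_i})$), chaining $\mathcal{K}\big(S/\init_{>'}(J)\big)=\mathcal{K}(S/J)=\mathcal{K}(R/I)=\mathcal{K}(R/\init_{>}(I))=\mathcal{K}\big(S/\phi(\init_{>}(I))S\big)$; two monomial ideals of $S$, one contained in the other and with equal $\mathcal{K}$-polynomials (equivalently equal Hilbert functions), must coincide. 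For (vi): since $S/J=S\otimes_R R/I$ is flat over the domain $R/I$, it is torsion-free and embeds into $(S/J)\otimes_{R/I}\operatorname{Frac}(R/I)=K[y_{i,j}]/\big(\bar{x}_i-\textstyle\prod_j y_{i,j}:1\le i\le n\big)$ with $K=\operatorname{Frac}(R/I)$; the hypothesis that $I$ contains no variable makes each $\bar{x}_i$ a unit of $K$, and eliminating $y_{i,\ell_i}$ through $y_{i,\ell_i}=\bar{x}_i\prod_{j<\ell_i}y_{i,j}^{-1}$ identifies this ring with the Laurent polynomial ring $K[y_{i,j}^{\pm1}:1\le i\le n,\ 1\le j\le\ell_i-1]$, which is a domain; hence $S/J$ is a domain and $J$ is prime.

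I expect the genuine content to sit in two places. The first is the opening step, establishing freeness of $S$ over $R$ via $\phi$: once that is in place, (i)--(iv) are bookkeeping. The second is the reverse inclusion in (v), where in the non-standard graded setting one cannot argue by naive dimension or degree counts and must instead go through $\mathcal{K}$-polynomials as above; part (vi), by contrast, is the most self-contained once one passes to the generic fibre over $R/I$.
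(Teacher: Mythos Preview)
The paper does not supply its own proof of this theorem; it is quoted from \cite[Theorem~7.2]{MULTDEG_NON_STD} and \cite[Proposition~4.2]{DOUBLE_SCHUBERT_POLYM}, so there is no in-paper argument to compare against.

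Your proof is correct and is essentially the natural one. The structural observation that $\phi$ exhibits $S$ as a graded free $R$-module (via the ``reduced monomial'' basis) is exactly the right starting point, and parts (i)--(iv) follow formally by tensoring a minimal free resolution along this faithfully flat extension. Your $\mathcal{K}$-polynomial squeeze for the reverse inclusion in (v) is clean; Macaulay's basis theorem gives $\mathcal{K}(R/I;\ttt)=\mathcal{K}(R/\init_{>}(I);\ttt)$ for any monomial order, independent of the grading, so the chain you write is valid in the positively graded setting.

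One small presentational gap in (vi): before writing $y_{i,\ell_i}=\bar{x}_i\prod_{j<\ell_i}y_{i,j}^{-1}$ you should remark that in the ring $K[y_{i,j}]/(\bar{x}_i-\prod_j y_{i,j})$ each $y_{i,j}$ divides the unit $\bar{x}_i$ and is therefore itself a unit; hence this ring coincides with its own localization at the $y_{i,j}$, and only then is the elimination to the Laurent polynomial ring $K[y_{i,j}^{\pm1}:1\le j\le\ell_i-1]$ legitimate. With that sentence added, the argument is complete.
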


										Finally, we are ready to present the main result of this section. 
										It yields a large new family of covolume polynomials.

										\begin{theorem}\label{thm:dually_lor}
											Let $P \subset R$ be a prime $R$-homogeneous ideal. 
											Then $\mathcal{C}(R/P; \ttt)$ is a covolume polynomial.
										\end{theorem}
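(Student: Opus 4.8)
The plan is to reduce to the standard‑multigraded situation of \autoref{rem_std_grading}, where the multidegree polynomial of a homogeneous prime is, essentially by definition, the class of an irreducible subvariety of a product of projective spaces, and hence a covolume polynomial (\autoref{def_covol}). Two things have to be arranged: the passage from an arbitrary positive $\NN^p$-grading to a standard one, which is exactly what the standardization of \autoref{setup_std}--\autoref{thm_std} does; and the fact that, before standardizing, $P$ may contain some of the variables, which would destroy primeness under $\phi$.

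First I would peel off the variables contained in $P$. If $x_i\in P$, put $\bar R=R/(x_i)$ — still a positively $\NN^p$-graded polynomial ring, now on the remaining variables — and $\bar P=P/(x_i)$, which is again a homogeneous prime with $\bar R/\bar P=R/P$. Since $x_i$ annihilates $R/P$, this module has one and the same Hilbert series over $R$ and over $\bar R$, so comparing the expressions $\mathcal K_R(R/P;\ttt)/\prod_{j=1}^n(1-\ttt^{\deg x_j})$ and $\mathcal K_{\bar R}(R/P;\ttt)/\prod_{j\ne i}(1-\ttt^{\deg x_j})$ gives $\mathcal K_R(R/P;\ttt)=(1-\ttt^{\deg x_i})\,\mathcal K_{\bar R}(R/P;\ttt)$. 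Substituting $\mathbf 1-\ttt$ and extracting the lowest-degree part — the factor $1-(\mathbf 1-\ttt)^{\deg x_i}$ contributes the degree-one form $\langle\deg x_i,\ttt\rangle:=\sum_k(\deg x_i)_k\,t_k$, which is nonzero because the grading is positive, while codimensions add because $x_i$ is a nonzerodivisor lying in $P$ — I obtain
$$
\mathcal C_R(R/P;\ttt)\;=\;\langle\deg x_i,\ttt\rangle\cdot\mathcal C_{\bar R}(\bar R/\bar P;\ttt).
$$
Iterating over all variables of $R$ contained in $P$ reduces the problem to a homogeneous prime $P'\subset R':=\kk[\,x_j:x_j\notin P\,]$ that contains no variable, at the cost of a factor $\prod_{x_i\in P}\langle\deg x_i,\ttt\rangle$, a product of linear forms with nonnegative integer coefficients.

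Next I would standardize $P'$. Let $J'=\phi(P')S'$ be its standardization (\autoref{setup_std}). By \autoref{thm_std}(iii), $\mathcal C_{R'}(R'/P';\ttt)=\mathcal C_{S'}(S'/J';\ttt)$, and by \autoref{thm_std}(vi), $J'$ is prime — here it is crucial that $P'$ contains no variable. Moreover $J'$ itself contains no variable: a degree-$\ee_k$ element of $\phi(P')S'$ is forced to be $\phi$ of a degree-$\ee_k$ element of $P'$, and since $\phi$ is the identity on the degree-$\ee_k$ variables, such an element is a $\kk$-linear combination of those variables; it equals a single variable only if $P'$ contains the corresponding $x_i$. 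Consequently $J'$ does not contain the multiprojective irrelevant ideal of any factor, so $J'$ defines a nonempty integral closed subscheme $X$ of the product $\PP$ of projective spaces whose homogeneous coordinate ring is $S'$; by \autoref{rem_std_grading}, $\mathcal C_{S'}(S'/J';\ttt)=P_{[X]}(\ttt)$, a covolume polynomial. Combining this with the previous step yields
$$
\mathcal C_R(R/P;\ttt)\;=\;\Big(\textstyle\prod_{x_i\in P}\langle\deg x_i,\ttt\rangle\Big)\cdot P_{[X]}(\ttt).
$$

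It then remains to see that this product is a covolume polynomial. For this I would use that (a) multiplication by a single variable $t_k$ preserves covolume-ness — if $Y\subset\prod_l\PP^{m_l}$ is irreducible, the closed embedding into $\PP^{m_k+1}\times\prod_{l\ne k}\PP^{m_l}$ induced by a linear $\PP^{m_k}\hookrightarrow\PP^{m_k+1}$ in the $k$-th factor keeps $Y$ irreducible and sends its class to $H_k\cdot[Y]$ — and (b) each $\langle\baa,\ttt\rangle$ with $\baa\in\NN^p\setminus\{\mathbf 0\}$ is itself covolume, being the class of a general, hence irreducible, hypersurface of multidegree $\baa$ in a product of projective spaces with large enough factors; granting stability of covolume polynomials under products, multiplying $P_{[X]}$ successively by the forms $\langle\deg x_i,\ttt\rangle$ then stays within the class. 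The hard part is precisely this last closure property (equivalently, stability under multiplication by the $\langle\baa,\ttt\rangle$): while $\langle\baa,\ttt\rangle\cdot P_{[Y]}$ is visibly the class of $Y\cap D$ for a general multidegree-$\baa$ hypersurface $D$ — which meets $Y$ properly because $\OO(\baa)$ is globally generated — that intersection need not be irreducible, so one must either establish, via a Bertini-type connectedness/degeneration argument, that the required polynomial is nonetheless the class of an irreducible subvariety of a suitable product of projective spaces, or else appeal to the closure of the cone of covolume polynomials under products from \cite{ALUFFI_COVOL}. Everything else — the Hilbert-series identity, the codimension count, and the properties of standardization — is routine given \autoref{thm_std} and \autoref{rem_std_grading}.
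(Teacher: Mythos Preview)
Your proposal is correct, and its overall architecture---peel off variables, then standardize---matches the paper's, but the paper handles the variables in $P$ by a neater trick that lets it avoid the product-closure step you flag as ``the hard part''. Instead of deleting the variables $x_i\in P$ and later multiplying $\mathcal C$ by the linear forms $\langle\deg x_i,\ttt\rangle$, the paper adjoins new variables $z_i$ with $\deg(z_i)=\deg(x_i)$, forms the larger polynomial ring $T=R[z_i:\,x_i\in P]$, and replaces $P$ by $\mathfrak P=P'T+(x_i-z_i:\,x_i\in P)$. This $\mathfrak P$ is still prime, contains no variable, and has the \emph{same} multidegree polynomial as $P$ (each $x_i-z_i$ contributes the factor $\langle\deg x_i,\ttt\rangle$ just as your peeled-off $x_i$ does). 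One standardization of $\mathfrak P$ then produces a single prime $\mathfrak Q$ in a standard $\NN^p$-graded polynomial ring, and \autoref{rem_std_grading} finishes the proof directly. The payoff is that the paper never needs to know that a product of covolume polynomials is covolume, nor any Bertini-type irreducibility statement; your route is valid but must import that closure property (e.g.\ \cite[Corollary~2.14]{ALUFFI_COVOL}) to recombine the linear factors with $P_{[X]}$.
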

										\begin{proof}
											Let $\mathcal{L} = \{ i \mid x_i \in P \}$ be the set that indexes the variables belonging to $P$.
											Write $P = P' + \left(x_i \mid i \in \mathcal{L}\right)$ with $P' \subset R$ only involving the variables not in $\mathcal{L}$.
											Let $T = R\left[z_i \mid i \in \mathcal{L}\right]$ be a positively $\NN^p$-graded polynomial ring extending the grading of $R$ and with $\deg(z_i) = \deg(x_i)$.
											Let $\mathfrak{P} = P'T + \left(x_i - z_i \mid i \in \mathcal{L}\right) \subset T$.
											Notice that $\mathfrak{P}$ is a prime ideal containing no variable and that 
											$$
											\mathcal{C}(T/\mathfrak{P}; \ttt) \;=\;  \prod_{i \in \mathcal{L}} \langle \deg(x_i-z_i), \ttt \rangle \cdot \mathcal{C}(T/P'T; \ttt) \;=\; \prod_{i \in \mathcal{L}} \langle \deg(x_i), \ttt \rangle \cdot \mathcal{C}(R/P'; \ttt) \;=\; \mathcal{C}(R/P;\ttt),
											$$
											where $\langle \deg(x_i), \ttt \rangle = a_{i,1}t_1 + \cdots + a_{i,p}t_p \in \NN[\ttt]$ after writing $(a_{i,1}, \ldots,a_{i,p}) = \deg(x_i) \in \NN^p$ (see \cite[Exercise 8.12]{MS}).
											Let $W = S\left[w_{i,j} \mid i \in \mathcal{L}, 1 \le j \le \ell_i \right]$ be a standard $\NN^p$-graded polynomial ring where we consider the corresponding standardization $\mathfrak{Q} \subset W$ of $\mathfrak{P} \subset T$.
											From \autoref{thm_std}(iii), we get 
											$$
											\mathcal{C}(R/P;\ttt) \;=\; \mathcal{C}(T/\mathfrak{P};\ttt) \;=\; \mathcal{C}(W/\mathfrak{Q}; \ttt).
											$$
											Finally, $\mathfrak{Q} \subset W$ is a prime ideal by \autoref{thm_std}(vi), and so the result follows by \autoref{rem_std_grading}.
										\end{proof}

										\section{Equivariant cohomology in multigraded commutative algebra}\label{s:Equiv_coho_multigraded}
										
										In this section, we study the equivariant cohomology of the irreducible varieties that appear in multigraded commutative algebra. 
										Here we show that, under suitable conditions, equivariant classes of irreducible multigraded varieties yield covolume polynomials (up to changing the sign of negative coefficients). 
										We follow the references \cite{AF_Equi_Coho_AG} and \cite[Chapter 5]{REP_THEORY_GEOM} for the basics of equivariant cohomology and equivariant $K$-theory.
										
										Let $R=\CC[x_1,\ldots,x_n]$ be a polynomial ring with a $\ZZ^p$-grading. 
										Let $T = (\CC^*)^p$ be a torus acting on $\AAA_\CC^n$.
										Any such $\ZZ^p$-grading can be specified by the torus weights. 
										More precisely, if we have the action
										$$
										\left(h_1,\ldots,h_p\right) \cdot \left(a_1,\ldots,a_n\right) \;=\; \big(h_1^{d_{1,1}}h_2^{d_{1,2}}\cdots h_p^{d_{1,p}} a_1,\, h_1^{d_{2,1}}h_2^{d_{2,2}}\cdots h_p^{d_{2,p}} a_2,\, \ldots,\, h_1^{d_{n,1}}h_2^{d_{n,2}}\cdots h_p^{d_{n,p}} a_n\big) 
										$$
										for all $(h_1,\ldots,h_p) \in T$ and $(a_1,\ldots,a_n) \in \AAA_\CC^n$, then we obtain the specific grading $\deg(x_i) = \dd_i = (d_{i,1}, d_{i,2}, \ldots, d_{i,p}) \in \ZZ^p$ for all $1 \le i \le n$.
										There is a bijective correspondence between $\ZZ^p$-graded $R$-modules and $T$-equivariant coherent sheaves on $\AAA_\CC^n$.
										Since $\AAA_\CC^n$ is smooth, the Grothendieck ring of $T$-equivariant vector bundles coincides with the Grothendieck ring of $T$-equivariant coherent sheaves. 
										We denote this ring by $K_T(\AAA_\CC^n)$.
										The Grothendieck ring $K_T(\AAA_\CC^n)$ coincides with the Grothendieck ring of a point and the representation ring of $T$:
										$$
										K_T(\AAA_\CC^n) \;\cong\; K_T({\rm pt}) \;\cong\; R(T) \;=\; \ZZ[t_1,\ldots, t_p,t_1^{-1},\ldots, t_p^{-1}].
										$$
										Given a finitely generated $\ZZ^p$-graded $R$-module $M$ (i.e., $\widetilde{M}$ is a $T$-equivariant coherent sheaf on $\AAA_\CC^n$), we denote by $[M]^T$ the equivariant class of $M$ in $K_T(\AAA_\CC^n) \cong \ZZ[t_1,\ldots, t_p,t_1^{-1},\ldots, t_p^{-1}]$.

										We consider the $T$-equivariant cohomology ring 
										$$
										\HH_T^\bullet(\AAA_\CC^n) \;:=\;  \HH^\bullet\big(\mathbb{E}T \times^T \AAA_\CC^n\big),
										$$
										where $\mathbb{E}T$ is contractible with $T$ acting freely (on the right). 
										Then $\mathbb{B}T := \mathbb{E}T/T$ is a classifying space for $T$.
										Since we can take $\mathbb{E}T=\left(\CC^\infty \setminus \{0\}\right)^n$ and $\mathbb{B}T = \left(\PP_\CC^\infty\right)^n$, it follows that 
										$$
										\HH_T^\bullet(\AAA_\CC^n) \;\cong\; \Lambda_T := \HH^\bullet_T({\rm pt}) \;=\; \HH^\bullet(\mathbb{B}T) \;=\; \ZZ[t_1,\ldots,t_p].
										$$
										Given a $T$-subvariety $X \subset \AAA_\CC^n$, we denote by $[X]^T := [\mathbb{E}T \times^T X]$ the equivariant class of $X$ in $\HH_T^\bullet(\AAA_\CC^n) \cong \ZZ[t_1,\ldots,t_p]$.
										
										The following remark allows us to apply our results in \autoref{sec_pos_gradings} to equivariant cohomology. 
										
										\begin{remark}
											\label{rem_equiv_multdeg}
											Let $M$ be a finitely generated $\ZZ^p$-graded $R$-module and $X \subset \AAA_\CC^n$ be a $T$-subvariety with coordinate ring $R/I$.  
											Let $K_M(t_1,\ldots,t_p)$ and $C_X(t_1,\ldots,t_p)$ be the Laurent polynomial and the polynomial representing the classes $[M]^T \in K_T(\AAA_\CC^n)$ and $[X]^T \in \HH_T^\bullet(\AAA_\CC^n)$, respectively.
											Then we have the equalities
											$$
											K_M(t_1,\ldots,t_p) \;=\; \mathcal{K}(M;t_1,\ldots,t_p) \qquad \text{ and } \qquad C_X(t_1,\ldots,t_p) \;=\; \mathcal{C}(R/I;t_1,\ldots,t_p).
											$$
										\end{remark}
										\begin{proof}
											See, e.g., \cite[\S]{KNUTSON_MILLER_SCHUBERT} or \cite[Proposition 1.19]{KMS_QUIVER}.
										\end{proof}
										
										Then \autoref{thm:dually_lor} poses the question of whether the class $[X]^T$ of any irreducible $T$-subvariety $X \subset \AAA_\CC^n$  yields a covolume polynomial.
										The following example shows that it is not always the case for arbitrary $T$-actions. 
										
										\begin{example}
											\label{examp_bad_grading}
											Consider $\AAA_\CC^2$ and the torus $T = (\CC^*)^2$ with the action 
											$$
											(h_1, h_2) \cdot (a_1, a_2) \;:=\; (h_1h_2 a_1, h_1h_2^{-1}a_2)
											$$
											for all $(a_1,a_2) \in \AAA_\CC^2$ and $(h_1,h_2) \in T$.
											Let $R = \CC[x,y]$ be the coordinate ring of $\AAA_\CC^2$. 
											The $T$-action above specifies the grading $\deg(x) = (1, 1) \in \ZZ^2$ and $\deg(y) = (1,-1) \in \ZZ^2$.
											Consider the variety $X = \{(0,0)\} \subset \AAA_\CC^2$ given by the origin. 
											It is clearly irreducible, but the polynomial representative of the cohomology class
											$$
											[X]^T \;=\; \cC(R/(x,y); t_1, t_2) \;=\; (t_1+t_2)(t_1-t_2) \;=\; t_1^{2} - t_2^{2}  \qquad \text{(see, e.g., \cite[Proposition 8.49]{MS})}
											$$
											is not a covolume polynomial (its support is not $M$-convex) after changing signs.
										\end{example}

										For the rest of this section, we shall use the following setup that avoids the complications of the previous example.
										
										\begin{setup}
											\label{setup_almost_pos}
											Assume that the torus $T$ is given as $T = (\CC^*)^{q} \times (\CC^*)^{p-q}$ where ``$ (\CC^*)^{q}$ comes with positive weights and $(\CC^*)^{p-q}$ comes with negative weights''.
											More precisely, we require that 
											$$
											\dd_i \in \NN^p \setminus \{\mathbf{0}\} \quad \text{ for all $1 \le i \le q$}
											$$
											and 
											$$
											-\dd_i \in \NN^p \setminus \{\mathbf{0}\} \quad \text{ for all $q+1 \le i \le p$}.
											$$
											In this case we say that the action of $T$ determines a \emph{twisted positive grading} on $R$.
										\end{setup}

										The following lemma tells us that we can ``flip'' twisted positive gradings to positive gradings. 
										A version of this lemma appeared in \cite[Lemma 3.3]{DOUBLE_SCHUBERT_POLYM}.
										Let $\widetilde{R} = \kk[x_1,\ldots,x_n]$ be a polynomial ring with the same variables as $R$ but with grading given by setting $\deg(x_i) = \dd_i$ for $1 \le i \le q$ and $\deg(x_i) = -\dd_i$ for $q+1 \le i \le p$.
										Notice that $\widetilde{R}$ has a positive $\NN^p$-grading.
										
										\begin{lemma}
											\label{lem_flip_grading}
											Assume \autoref{setup_almost_pos}. 
											Let $I \subset R$ be a $\ZZ^p$-graded ideal, and also denote by $I$ the corresponding $\NN^p$-graded ideal in $\widetilde{R}$.
											Then we have $\mathcal{C}(\widetilde{R}/I; t_1,\ldots,t_q, t_{q+1},\ldots,t_p)  = \mathcal{C}(R/I; t_1,\ldots,t_q, -t_{q+1},\ldots,-t_p).$
										\end{lemma}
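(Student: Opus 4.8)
The plan is to prove the identity first at the level of $K$-polynomials and only then pass to multidegrees, using the defining recipe that writes $\mathcal{C}(M;\ttt)$ as the lowest total-degree homogeneous part of $\mathcal{K}(M;\mathbf{1}-\ttt)$. The structural point I would start from is that passing from $R$ to $\widetilde{R}$ is nothing but transporting the $\ZZ^p$-grading along the group automorphism
\[
\sigma\colon \ZZ^p \longrightarrow \ZZ^p,\qquad \sigma(a_1,\ldots,a_p) \;=\; (a_1,\ldots,a_q,\,-a_{q+1},\ldots,-a_p);
\]
under the twisted positive grading of \autoref{setup_almost_pos} each $\dd_i$ has nonnegative first $q$ coordinates and nonpositive last $p-q$ coordinates, so $\sigma(\dd_i)\in\NN^p$, which is exactly why $\widetilde{R}$ is positively $\NN^p$-graded. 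Since $\sigma$ is $\ZZ$-linear and $\sigma=\sigma^{-1}$, one checks $[\widetilde{R}]_{\mathbf{c}}=[R]_{\sigma(\mathbf{c})}$ for all $\mathbf{c}\in\ZZ^p$; in particular the two gradings declare the same elements homogeneous, so $I$ is automatically $\widetilde{R}$-homogeneous, and any $\ZZ^p$-graded free $R$-resolution $F_\bullet$ of $R/I$ with $F_i=\bigoplus_j R(-\mathbf{b}_{i,j})$ is, with the very same differentials, a $\ZZ^p$-graded free $\widetilde{R}$-resolution of $\widetilde{R}/I$ with $F_i=\bigoplus_j \widetilde{R}(-\sigma(\mathbf{b}_{i,j}))$.

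Since the $K$-polynomial does not depend on the resolution \cite[Theorem 8.34]{MS}, this would immediately give
\[
\mathcal{K}(\widetilde{R}/I;\ttt) \;=\; \sum_i (-1)^i \sum_j \ttt^{\sigma(\mathbf{b}_{i,j})} \;=\; \mathcal{K}\big(R/I;\,t_1,\ldots,t_q,\,t_{q+1}^{-1},\ldots,t_p^{-1}\big),
\]
because $\ttt^{\sigma(\mathbf{b})}$ is obtained from $\ttt^{\mathbf{b}}$ by inverting $t_{q+1},\ldots,t_p$. Note the left-hand side is a genuine polynomial (the grading of $\widetilde{R}$ is positive), while the right-hand side is a Laurent polynomial.

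To conclude, I would substitute $t_j\mapsto 1-t_j$ in this identity, expand each $(1-t_j)^{-1}$ with $j>q$ as the formal power series $\sum_{k\ge 0}t_j^k$, and extract the homogeneous component of total degree $c:=\codim(R/I)=\codim(\widetilde{R}/I)$ (the last equality because the two rings coincide once the grading is forgotten). On the left this component is $\mathcal{C}(\widetilde{R}/I;\ttt)$ by definition. On the right, writing $\mathcal{K}(R/I;\mathbf{1}-\mathbf{v})=\sum_{\bn}\gamma_{\bn}\mathbf{v}^{\bn}$ and plugging in $v_j=t_j$ for $j\le q$ and $v_j=1-(1-t_j)^{-1}=-t_j-t_j^2-\cdots$ for $j>q$, the degree-$c$ part turns out to be $\sum_{|\bn|=c}\gamma_{\bn}\prod_{j\le q}t_j^{n_j}\prod_{j>q}(-t_j)^{n_j}$, which is precisely $\mathcal{C}\big(R/I;\,t_1,\ldots,t_q,-t_{q+1},\ldots,-t_p\big)$. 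Comparing the two degree-$c$ parts yields the lemma.

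I expect the only genuinely delicate point to be this last manoeuvre: one must check that, after the substitution $t_j\mapsto(1-t_j)^{-1}$ for $j>q$, extracting the degree-$c$ part may be done by first truncating $\mathcal{K}(R/I;\mathbf{1}-\ttt)$ to its degree-$c$ part and only afterwards replacing $t_j$ by $-t_j$. This is legitimate because $\gamma_{\bn}=0$ whenever $|\bn|<c$ and the correction terms $-t_j^2-t_j^3-\cdots$ strictly raise total degree, so contributions with $|\bn|>c$ cannot feed back into degree $c$; this is essentially the argument of \cite[Lemma 3.3]{DOUBLE_SCHUBERT_POLYM}, which I would adapt to the present twisted positive setting. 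Everything else is routine bookkeeping with free resolutions and $K$-polynomials.
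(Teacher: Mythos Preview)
Your proposal is correct and follows essentially the same route as the paper's own proof: establish the $K$-polynomial identity $\mathcal{K}(\widetilde{R}/I;\ttt)=\mathcal{K}(R/I;t_1,\ldots,t_q,t_{q+1}^{-1},\ldots,t_p^{-1})$ from the transported free resolution, substitute $t_j\mapsto 1-t_j$, and then use that $1-\tfrac{1}{1-t_j}=-t_j+(\text{higher order})$ to read off the lowest-degree part. Your discussion of the ``delicate point'' is more explicit than the paper's one-line ``by expanding the right hand side,'' but the substance is the same (and indeed both arguments are adaptations of \cite[Lemma~3.3]{DOUBLE_SCHUBERT_POLYM}).
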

										\begin{proof}
											Let $r = p-q$ and set $s_i = t_{p+i}$ for $1 \le i \le r$.
											Notice that, if $\widetilde{F}_\bullet$ is a $(\ZZ^q \oplus \ZZ^{r})$-graded free $\widetilde{R}$-resolution of $\widetilde{R}/I$ with $\widetilde{F}_i = \bigoplus_{j} \widetilde{R}(-\mathbf{a}_{i,j},-\mathbf{b}_{i,j})$, then there is a corresponding  $(\ZZ^q \oplus \ZZ^{r})$-graded free $R$-resolution $F_\bullet$ of $R/I$ with $F_i = \bigoplus_{j} R(-\mathbf{a}_{i,j},\mathbf{b}_{i,j})$.
											By definition, this yields the equality of $K$-polynomials
											$$
											\mathcal{K}(\widetilde{R}/I;\ttt,\sss) = \mathcal{K}(\widetilde{R}/I;t_1,\ldots,t_q,s_1,\ldots,s_r) = \mathcal{K}(R/I;t_1,\ldots,t_q,s_1^{-1},\ldots,s_r^{-1}) = 	\mathcal{K}(R/I;\ttt,\sss^{\mathbf{-1}}).
											$$
											From \cite[Claim 8.54]{MS}, we have $\mathcal{K}(R/I;\mathbf{1-t},\mathbf{1-s}) = \mathcal{C}(R/I;\ttt,\sss) + Q(\ttt,\sss)$, where $Q(\ttt,\sss)$ is the sum of the terms of degree at least $\codim(I) + 1$.
											Equivalently, we get $\mathcal{K}(R/I;\ttt,\sss) = \mathcal{C}(R/I;\mathbf{1-t},\mathbf{1-s}) + Q(\mathbf{1-t},\mathbf{1-s})$. 
											It then follows that 
											$$
											\mathcal{K}(\widetilde{R}/I;\mathbf{1-t},\mathbf{1-s}) = \mathcal{C}(R/I; t_1,\ldots,t_q,1-\tfrac{1}{1-s_1},\ldots,1-\tfrac{1}{1-s_r}) + Q(t_1,\ldots,t_q,1-\tfrac{1}{1-s_1},\ldots,1-\tfrac{1}{1-s_r}).
											$$
											By expanding the right hand side of the above equality, the result of the lemma is obtained.	
										\end{proof}

										We are now ready to present the main result of this section.

										\begin{theorem}
											\label{thm_equiv}
											Assume \autoref{setup_almost_pos}. 
											Let $X \subset \AAA_\CC^n$ be an irreducible $T$-subvariety and $C_X(t_1,\ldots,t_p)$ be the polynomial representing the class $[X]^T \in \HH_T^\bullet(\AAA_\CC^n)$.
											Then $C_X(t_1,\ldots,t_q, -t_{q+1},\ldots,-t_p)$ is a covolume polynomial.
										\end{theorem}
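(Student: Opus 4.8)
The plan is to reduce the statement to the two results already proved in this paper: the grading-flip identity of \autoref{lem_flip_grading}, and \autoref{thm:dually_lor}, which asserts that the multidegree polynomial of a homogeneous prime in a positively $\NN^p$-graded polynomial ring over an algebraically closed field is a covolume polynomial. The first step is to translate the geometry into commutative algebra. Writing $I = I(X) \subset R = \CC[x_1,\ldots,x_n]$ for the defining ideal of $X$, the hypothesis that $X$ is a $T$-subvariety says precisely that $I$ is $\ZZ^p$-homogeneous for the grading induced by the torus action, and $I$ is prime since $X$ is irreducible. By \autoref{rem_equiv_multdeg}, the polynomial representing $[X]^T$ is then literally the multidegree polynomial,
$$
C_X(t_1,\ldots,t_p) \;=\; \mathcal{C}(R/I;\, t_1,\ldots,t_p).
$$

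Next I would move to the flipped ring $\widetilde R$ from \autoref{setup_almost_pos} (the one introduced just before \autoref{lem_flip_grading}), which by construction carries a genuine positive $\NN^p$-grading. As an ungraded ring $\widetilde R = R$, so $I$, regarded inside $\widetilde R$, is still prime; and it stays homogeneous there, because replacing the twisted positive grading by the positive grading only relabels the graded components — this is exactly what makes \autoref{lem_flip_grading} go through. Applying \autoref{lem_flip_grading} to $I$ gives
$$
\mathcal{C}(\widetilde R/I;\, t_1,\ldots,t_q,t_{q+1},\ldots,t_p) \;=\; \mathcal{C}(R/I;\, t_1,\ldots,t_q,-t_{q+1},\ldots,-t_p) \;=\; C_X(t_1,\ldots,t_q,-t_{q+1},\ldots,-t_p).
$$
Since $\widetilde R$ is positively $\NN^p$-graded over the algebraically closed field $\CC$ and $I \subset \widetilde R$ is a homogeneous prime, \autoref{thm:dually_lor} shows that $\mathcal{C}(\widetilde R/I;\ttt)$ is a covolume polynomial; combined with the displayed identity, this is precisely the assertion of the theorem.

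I do not expect a genuine obstacle here: once \autoref{lem_flip_grading} and \autoref{thm:dually_lor} are in hand, the argument is bookkeeping. The one place where the hypothesis of \autoref{setup_almost_pos} is essential — and cannot be dropped, as \autoref{examp_bad_grading} shows — is in guaranteeing that the flipped grading on $\widetilde R$ is again \emph{positive}, so that \autoref{thm:dually_lor} is applicable at all; for an arbitrary torus action the sign pattern of the coordinates of the $\dd_i$ need not be uniform, the flip cannot be performed, and $C_X$ need not even have $M$-convex support after any change of signs. The minor points to be careful about are that ``$T$-subvariety'' really does force $I$ to be $\ZZ^p$-homogeneous, and that the reindexing of graded pieces used in \autoref{lem_flip_grading} does preserve primality (immediate, since the underlying ring is unchanged).
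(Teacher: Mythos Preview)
Your proposal is correct and follows exactly the paper's own proof: combine \autoref{rem_equiv_multdeg} to identify $C_X$ with $\mathcal{C}(R/I;\ttt)$, apply \autoref{lem_flip_grading} to pass to the positively graded ring $\widetilde R$, and then invoke \autoref{thm:dually_lor} for the homogeneous prime $I\subset\widetilde R$. Your remarks about where \autoref{setup_almost_pos} is used and why \autoref{examp_bad_grading} shows it cannot be dropped are also on point.
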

										\begin{proof}
											The result follows by combining \autoref{rem_equiv_multdeg}, \autoref{lem_flip_grading}, and \autoref{thm:dually_lor}.
										\end{proof}
										
										Our main application of the above theorem is the following corollary. 
										
										\begin{corollary}
											\label{c:cor_equiv_matrices}
											Let ${\rm Mat}_{m,n} = \CC^{m \times n}$ be the space of $m \times n$ matrices with complex entries and consider the natural action of the torus $T = (\CC^*)^m \times (\CC^*)^n$ given by $(g,h) \cdot M = g \cdot M \cdot h^{-1}$ for all $M \in {\rm Mat}_{m,n}$ and $(g,h) \in T$.
											Let $X \subset {\rm Mat}_{m,n}$ be an irreducible $T$-variety and $C_X(t_1,\ldots,t_m,s_1,\ldots,s_n)$ be the polynomial representing the class $[X]^T$ in $\HH_T^\bullet\left({\rm Mat}_{m,n}\right) = \ZZ[t_1,\ldots,t_m,s_1,\ldots,s_n]$.
											Then we have that $C_X(t_1,\ldots,t_m,-s_1,\ldots,-s_n)$ is a covolume polynomial.
										\end{corollary}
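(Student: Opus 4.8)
The plan is to obtain the corollary as the special case of \autoref{thm_equiv} corresponding to the two-sided torus action on a space of matrices. First I would identify $\Mat_{m,n}$ with $\AAA_\CC^{mn}$, so that its coordinate ring is $R = \CC[x_{ij} \mid 1 \le i \le m,\ 1 \le j \le n]$ with $x_{ij}$ the $(i,j)$-entry function. Writing $g = (g_1,\ldots,g_m)$ and $h = (h_1,\ldots,h_n)$ for elements of the two torus factors, the $(i,j)$-entry of $g M h^{-1}$ is $g_i M_{ij} h_j^{-1}$, so in the convention of \autoref{s:Equiv_coho_multigraded} the $T$-action induces the $\ZZ^{m+n}$-grading on $R$ determined by $\deg(x_{ij}) = \mathbf{e}_i - \mathbf{f}_j$, where $\mathbf{e}_1,\ldots,\mathbf{e}_m$ and $\mathbf{f}_1,\ldots,\mathbf{f}_n$ are the standard bases of the two summands of $\ZZ^{m+n} = \ZZ^m \oplus \ZZ^n$, matched with the cohomology variables $t_1,\ldots,t_m$ and $s_1,\ldots,s_n$. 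Also, since $\Mat_{m,n} \cong \AAA_\CC^{mn}$ is a $T$-representation, \autoref{s:Equiv_coho_multigraded} already gives $\HH_T^\bullet(\Mat_{m,n}) \cong \HH^\bullet(\mathbb{B}T) = \ZZ[t_1,\ldots,t_m,s_1,\ldots,s_n]$, so the polynomial $C_X$ representing $[X]^T$ is well defined.

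Next I would check that the hypotheses of \autoref{thm_equiv} hold with $p = m+n$ and $q = m$, writing $T = (\CC^*)^m \times (\CC^*)^n = (\CC^*)^q \times (\CC^*)^{p-q}$. For each variable $x_{ij}$, the vector $\deg(x_{ij}) = \mathbf{e}_i - \mathbf{f}_j$ is nonzero (because $m,n \ge 1$), its first $m$ coordinates all lie in $\NN$, and its last $n$ coordinates all lie in $-\NN$. Hence the action of $T$ determines a twisted positive grading on $R$ in the sense of \autoref{setup_almost_pos}, and (via \autoref{rem_equiv_multdeg}) the polynomial $C_X$ coincides with the multidegree polynomial $\mathcal{C}(R/I;\ttt,\sss)$ of the defining prime ideal $I$ of $X$.

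Then I would simply apply \autoref{thm_equiv} with these values of $p$ and $q$: for any irreducible $T$-subvariety $X \subset \Mat_{m,n}$, negating precisely the last $p - q = n$ variables in $C_X$ produces a covolume polynomial, i.e., $C_X(t_1,\ldots,t_m,-s_1,\ldots,-s_n)$ is a covolume polynomial, which is exactly the assertion. The whole argument is a bookkeeping translation; the only place that requires genuine care --- and the step I would double-check --- is pinning down the sign convention relating the $T$-action to the induced grading, so that it is the block $s_1,\ldots,s_n$ attached to the right action of $(\CC^*)^n$, and not the block $t_1,\ldots,t_m$, that carries the nonpositive weights and therefore gets negated. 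All the substantive content --- standardization and the passage to genuine positive gradings --- has already been carried out in \autoref{lem_flip_grading} and \autoref{thm:dually_lor} and invoked inside \autoref{thm_equiv}, so there is no further obstacle.
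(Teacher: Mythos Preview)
Your proposal is correct and is exactly the intended argument: the paper states \autoref{c:cor_equiv_matrices} without proof precisely because it is the instance of \autoref{thm_equiv} obtained by observing that the action $(g,h)\cdot M = gMh^{-1}$ gives each matrix entry $x_{ij}$ weight $\mathbf{e}_i-\mathbf{f}_j$, which is a twisted positive grading with $q=m$ and $p=m+n$. Your careful verification of the sign convention (so that it is the $s$-block that gets negated) is the only thing one actually has to check, and you have done it.
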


										\section{Equivariant cohomology of matrix Richardson varieties}
										\label{sect_equiv_cohom_Richardson}
										
										In this section, we study the equivariant cohomology of matrix Richardson varieties for a pair of permutations.
										An interesting outcome of our approach is the definition of a new family of polynomials that we call \emph{double Richardson polynomials}.
										These polynomials specialize to many polynomials of interest.

										We take the viewpoint of studying Schubert varieties and Schubert polynomials through a certain degeneracy loci problem that was introduced by Fulton \cite{Fulton1992}.
										Let $E_\bullet$ and $F_\bullet$ be two flagged complex vector spaces of dimension $n$ with 
										$$
										F_1 \subset \cdots \subset F_n = F \qquad \text{ and } \qquad E = E_n \surjects \cdots \surjects E_1,
										$$ 
										where $\dim(F_i) = \dim(E_i) = i$.
										By choosing bases, we identify $\Hom(F, E)$ with the space $\Mat_{n,n} \cong \CC^{n \times n}$ of $n \times n$ matrices with complex entries.
										Consider the natural action of the torus $T = (\CC^*)^n \times (\CC^*)^n$ given by $(g,h) \cdot M = g \cdot M \cdot h^{-1}$ for all $M \in {\rm Mat}_{n,n}$ and $(g,h) \in T$.
										For a given permutation $w \in S_n$, we consider the degeneracy locus 
										$$
										D_w \;:=\; \big\lbrace \varphi \in \Hom(F, E) \mid \rank(\varphi_{p,q}) \le \rank(w_{p,q}) \text{ for all $1 \le p,q \le n$}\big\rbrace,
										$$
										where $\varphi_{p,q}$ is the composition of maps $F_q \rightarrow F \xrightarrow{\varphi} E \rightarrow E_p$ and $w_{p\times q}$ is the upper-left submatrix of size $p\times q$ of the permutation matrix of $w$.
										Under the identification $\Hom(F, E) \cong \Mat_{n,n}$, we say that $D_w$ is the \emph{matrix Schubert variety} of $w$. 
										
										Choose a basis $\{f_1,\ldots,f_n\}$ of $F$ such that $\{f_1,\ldots,f_i\}$ is a basis of $F_i$ for all $1 \le i \le n$.
										Let $\widetilde{F}_\bullet$ be the opposite flag 
										$$
										\widetilde{F}_1 \subset \cdots \subset \widetilde{F}_n \qquad \text{ where } \qquad \widetilde{F}_i = \langle f_{n-i+1}, \ldots, f_n \rangle.
										$$
										For a given permutation $w \in S_n$, we consider the degeneracy locus 
										$$
										D^w \;:=\; \big\lbrace \varphi \in \Hom(F, E) \mid \rank(\varphi^{p,q}) \le \rank(w^{p,q}) \text{ for all $1 \le p,q \le n$}\big\rbrace,
										$$
										where $\varphi^{p,q}$ is the composition of maps $\widetilde{F}_q \rightarrow F \xrightarrow{\varphi} E \rightarrow E_p$ and $w^{p\times q}$ is the upper-right submatrix of size $p\times q$ of the permutation matrix of $w$.
										Under the identification $\Hom(F, E) \cong \Mat_{n,n}$, we say that $D^w$ is the \emph{opposite matrix Schubert variety} of $w$.

										A result of fundamental importance for us is the following geometric interpretation of double Schubert polynomials. 
										
										\begin{theorem}[{\cite{FR,KNUTSON_MILLER_SCHUBERT, AF_Equi_Coho_AG}}]
											\label{thm_double_Schubert_equiv}
											The equivariant class $\left[D_w\right]^T$ of the matrix Schubert variety $D_w$ in the cohomology ring $\HH_T^\bullet(\Mat_{n,n}) \cong \ZZ[t_1,\ldots,t_n,s_1,\ldots,s_n]$ is given by the double Schubert polynomial $\fS_w(\ttt,\sss)$.
										\end{theorem}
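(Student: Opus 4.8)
This is a classical theorem, established in \cite{Fulton1992, FR, KNUTSON_MILLER_SCHUBERT, AF_Equi_Coho_AG}; the plan below is how we would prove it. First pass to commutative algebra through \autoref{rem_equiv_multdeg}: let $R = \CC[z_{ij} \mid 1 \le i,j \le n]$ be the coordinate ring of $\Mat_{n,n}$, graded by the $T$-action so that $z_{ij}$ carries the weight $t_i - s_j$ (a twisted positive grading in the sense of \autoref{setup_almost_pos}, with the $t$'s positive and the $s$'s negative), and let $I_w \subset R$ be the \emph{Schubert determinantal ideal}, generated by the size-$(\rank(w_{p,q})+1)$ minors of the generic top-left $p \times q$ submatrix of $(z_{ij})$, over all $1 \le p,q \le n$. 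By a foundational theorem of Fulton \cite{Fulton1992}, $I_w$ is the prime ideal of $D_w$, $\codim(I_w) = \ell(w)$, and $R/I_w$ is Cohen--Macaulay; in particular $D_w$ is irreducible and reduced, so $[D_w]^T = \mathcal{C}(R/I_w;\ttt,\sss)$ by \autoref{rem_equiv_multdeg}, and both sides of the claimed identity are homogeneous of degree $\ell(w)$.

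Next, match both families against the recursion that characterizes double Schubert polynomials: $\fS_{w_0}(\ttt,\sss) = \prod_{i+j \le n}(t_i - s_j)$, and $\partial_i\,\fS_v(\ttt,\sss) = \fS_{v\mathfrak{s}_i}(\ttt,\sss)$ whenever $\ell(v\mathfrak{s}_i) = \ell(v) - 1$. Since every $w$ is reached from $w_0$ by a chain of such right multiplications (read off a reduced word of $w_0 w$), these two properties determine the family uniquely, so it suffices to verify them for $w \mapsto [D_w]^T$. The base case is a direct computation with the rank conditions: as $\rank((w_0)_{p,q}) = \max(0, p+q-n)$, the conditions with $p+q \le n$ force $z_{ij} = 0$ for $i+j \le n$, and the remaining ones become automatic, so $D_{w_0}$ is exactly the $T$-invariant coordinate subspace $\{z_{ij} = 0 \mid i+j \le n\} \subset \Mat_{n,n}$; its equivariant class is the product of the weights of the vanishing coordinates, namely $\prod_{i+j \le n}(t_i - s_j) = \fS_{w_0}(\ttt,\sss)$.

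The crux is the recursion $\partial_i\,[D_v]^T = [D_{v\mathfrak{s}_i}]^T$ for $v\mathfrak{s}_i < v$ in Bruhat order, so that $D_{v\mathfrak{s}_i} \supsetneq D_v$ has codimension $\ell(v) - 1$. The mechanism, which is the essential content of \cite{Fulton1992, FR, KNUTSON_MILLER_SCHUBERT, AF_Equi_Coho_AG}, is a standard $\PP^1$-bundle and parabolic-sweep argument: $D_v$ and $D_{v\mathfrak{s}_i}$ differ by modifying one of the ambient flags at level $i$, so, writing $P_i \supset B$ for the corresponding minimal parabolic of the relevant copy of $\GL_n$ acting on $\Hom(F,E)$, one shows that $D_{v\mathfrak{s}_i}$ is $P_i$-invariant, equals the closure of the $P_i$-sweep of $D_v$, and that the sweeping morphism $P_i \times^B D_v \to D_{v\mathfrak{s}_i}$ is birational (using $\ell(v) = \ell(v\mathfrak{s}_i)+1$ and the explicit rank-condition description to check that the generic fibre is a single reduced point); since $\partial_i$ is the Gysin pushforward along the $\PP^1$-fibration of classifying spaces attached to $T \subset P_i$, the projection formula for this birational sweep gives $[D_{v\mathfrak{s}_i}]^T = \partial_i\,[D_v]^T$. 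I expect the main obstacle to be verifying the birationality of the $P_i$-sweep from the rank-condition description, and tracking the flag and Borel conventions so that the divided difference that appears is the one acting on the $t$'s and paired with right multiplication $v \mapsto v\mathfrak{s}_i$ (as in our definition of $\fS_w(\ttt,\sss)$) — together with keeping the signs straight in the equivariant pushforward.

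An alternative to the geometric inductive step, arguably closer to the commutative-algebra viewpoint of this paper, is the Gr\"obner-geometry route of \cite{KNUTSON_MILLER_SCHUBERT}: degenerate $I_w$ to its initial ideal $\init(I_w)$ for a suitable term order, show that $\init(I_w)$ is squarefree and equal to the Stanley--Reisner ideal of the subword complex of $w$, which is shellable (hence Cohen--Macaulay) of codimension $\ell(w)$, so that $\mathcal{C}(R/I_w;\ttt,\sss)$ is the sum over the facets of the multidegrees of the associated coordinate subspaces. Each such multidegree is a single monomial in the weights $t_i - s_j$, and the sum is precisely the pipe-dream (Billey--Jockusch--Stanley) formula for $\fS_w(\ttt,\sss)$. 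Here the obstacle is instead the Gr\"obner-basis computation identifying $\init(I_w)$ and the combinatorial matching of facets with reduced pipe dreams.
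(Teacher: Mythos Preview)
The paper does not supply its own proof of this theorem: it is stated with attribution to \cite{FR,KNUTSON_MILLER_SCHUBERT,AF_Equi_Coho_AG} and used as a black box (it feeds into \autoref{l:cohomology_class_opp_sch} and \autoref{t:equiv_coho_richardson}). So there is nothing in the paper to compare your argument against line by line.

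That said, your outline is a faithful summary of how the cited references actually prove the result. The reduction to multidegrees via \autoref{rem_equiv_multdeg}, the identification of $I_w$ with Fulton's Schubert determinantal ideal, the base case $[D_{w_0}]^T = \prod_{i+j\le n}(t_i-s_j)$, and the parabolic-sweep/$\PP^1$-bundle realization of $\partial_i$ are exactly the ingredients in \cite{FR} and \cite[Chapter 10]{AF_Equi_Coho_AG}; your alternative Gr\"obner/pipe-dream route is precisely \cite{KNUTSON_MILLER_SCHUBERT}. The caveats you flag (checking birationality of the sweep, matching the side on which $\mathfrak{s}_i$ acts with the set of variables on which $\partial_i$ acts) are the genuine bookkeeping points, and you have them right for the conventions in this paper: the flag $E_\bullet$ contributes the $t$-variables and the rank conditions on upper-left submatrices make right multiplication by $\mathfrak{s}_i$ correspond to $\partial_i$ in the $t$'s. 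In short, your proposal is correct and matches the literature the paper defers to; for the purposes of this paper, a one-line citation (as the paper does) is all that is expected.
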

										
										The next lemma expresses the equivariant class of the degeneracy locus $D^w$.
										
										\begin{lemma}\label{l:cohomology_class_opp_sch}
											The equivariant class $\left[D^w\right]^T \in \HH_T^\bullet(\Mat_{n,n})$ of $D^w$ is given by the double Schubert polynomial 
											$$
											\fS_{w_0w}(t_1,\ldots,t_n,s_n,\ldots,s_1)
											$$ 
											of $w_0w$ with a reverse ordering of the variables $s_1,\ldots,s_n$. 
										\end{lemma}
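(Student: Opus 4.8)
The plan is to identify $D^w$ with an ordinary matrix Schubert variety after the linear change of coordinates on $\Hom(F,E)$ that reverses the flag $F_\bullet$, and then to track how this change of coordinates acts on the $T$-grading. Let $g_0\colon F\to F$ be the automorphism with $g_0(f_j)=f_{n+1-j}$, so that $g_0(F_q)=\widetilde{F}_q$ for every $q$; then $(\varphi\circ g_0)_{p,q}=\varphi^{p,q}\circ(g_0|_{F_q})$ with $g_0|_{F_q}\colon F_q\to\widetilde{F}_q$ an isomorphism, and hence $\rank\big((\varphi\circ g_0)_{p,q}\big)=\rank(\varphi^{p,q})$. On the matrix side $g_0$ has matrix $P_{w_0}$, so $\varphi\circ g_0$ has matrix $M\,P_{w_0}$, that is, right multiplication by $P_{w_0}$. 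The combinatorial point is that reversing the columns of the permutation matrix $P_w$ produces $P_{w_0w}$ (the $1$ in row $i$ moves from column $w(i)$ to column $n+1-w(i)=(w_0w)(i)$), which exchanges ``upper-right $p\times q$ submatrix'' with ``upper-left $p\times q$ submatrix''. Putting these together, the defining rank conditions of $D^w$ become precisely those of $D_{w_0w}$ evaluated at $\varphi\circ g_0$; thus the automorphism $\Psi\colon\Mat_{n,n}\to\Mat_{n,n}$, $M\mapsto M\,P_{w_0}$, restricts to an isomorphism $D^w\xrightarrow{\sim}D_{w_0w}$, so that $D^w=\Psi^{-1}(D_{w_0w})$.

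With this in hand I would pass to equivariant classes via \autoref{rem_equiv_multdeg}. Let $R=\CC[x_{i,j}\mid 1\le i,j\le n]$ be the coordinate ring of $\Mat_{n,n}$; the standard $T$-action gives $x_{i,j}$ the weight $t_i-s_j$, so $D^w$ is $T$-stable and $[D^w]^T$ is represented by $\mathcal{C}\big(R/I(D^w);\ttt,\sss\big)$. The comorphism of $\Psi$ is the ring automorphism $\Psi^\sharp\colon R\to R$ with $\Psi^\sharp(x_{i,j})=x_{i,\,n+1-j}$, and $D^w=\Psi^{-1}(D_{w_0w})$ gives $I(D^w)=\Psi^\sharp\big(I(D_{w_0w})\big)$. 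Since $\Psi^\sharp$ carries a variable of weight $t_i-s_j$ to one of weight $t_i-s_{n+1-j}$, it is a graded isomorphism from $R$ with its standard $\ZZ^{2n}$-grading onto $R$ with the grading obtained by reversing $s_1,\dots,s_n$. Because $K$-polynomials and multidegrees are read off from graded free resolutions, this reversal passes through to the polynomial representatives, so
$$
\mathcal{C}\big(R/I(D^w);\,t_1,\dots,t_n,\,s_1,\dots,s_n\big)\;=\;\mathcal{C}\big(R/I(D_{w_0w});\,t_1,\dots,t_n,\,s_n,\dots,s_1\big),
$$
and by \autoref{thm_double_Schubert_equiv} the right-hand side equals $\fS_{w_0w}(t_1,\dots,t_n,s_n,\dots,s_1)$, which is the claim. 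Equivalently one can skip coordinate rings and observe directly that $\Psi$ is $T$-equivariant up to the automorphism of $T=(\CC^*)^n\times(\CC^*)^n$ that reverses the second factor, so that $[D^w]^T$ is the pullback of $[D_{w_0w}]^T$ along that automorphism.

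The step that requires care — rather than being hard — is the bookkeeping in the first paragraph: matching ``upper-right submatrix of $P_w$'' with ``upper-left submatrix of $P_{w_0w}$'', checking that the reversal of $F_\bullet$ corresponds to the plain reversal $s_j\leftrightarrow s_{n+1-j}$ of variables (and not to one twisted by a sign or acting on the $t_i$'s as well), and verifying that $D^w$ is genuinely $T$-stable so that $[D^w]^T$ is defined. I would pin these conventions down by first checking the formula for $n=2$ against the known values $\fS_{w_0}(\ttt,\sss)=t_1-s_1$ and $\fS_e=1$ before writing the general argument.
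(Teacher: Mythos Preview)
Your proof is correct and follows essentially the same approach as the paper: both identify $D^w$ with $D_{w_0w}$ via the column-reversal involution $\varphi\mapsto\varphi\circ g_0$ (the paper calls this map $\alpha$) and then track the resulting swap $s_j\leftrightarrow s_{n+1-j}$. The only cosmetic difference is that the paper phrases the final step directly in equivariant cohomology --- computing the pullback $f^*$ via the Borel construction and using smoothness to get $f^*([D_{w_0w}]^T)=[f^{-1}(D_{w_0w})]^T$ --- whereas you work through multidegrees and the graded ring automorphism $\Psi^\sharp$; you yourself note this equivalence in your last paragraph.
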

										\begin{proof}
											Let $\alpha\colon F \rightarrow F$ be the involution given by setting $\alpha(f_i) = f_{n+1-i}$.
											To simplify notation, let $\mathcal{M} = \Hom(F, E)$.
											Let $\eta\colon T \rightarrow T$ be the involution given by $\eta(g_1,\ldots,g_n, h_1,\ldots,h_n) = (g_1,\ldots,g_n, h_n,\ldots,h_1)$.
											Then the induced involution $f\colon \mathcal{M} \rightarrow \mathcal{M}, \varphi \mapsto \varphi \circ \alpha$ is an equivariant map with respect to $\eta$ because 
											$$
											f\left((g,h) \cdot \varphi\right) \;=\; \eta(g,h) \cdot f(\varphi)
											$$
											for all $\varphi \in \mathcal{M}$ and $(g, h) \in T$.
											Then the induced pullback homomorphism is given by 
											$$
											f^*\colon\; \HH_T^\bullet(\mathcal{M}) = \ZZ[\ttt, \sss] \;\rightarrow\; \HH_T^\bullet(\mathcal{M}) = \ZZ[\ttt, \sss], \qquad t_i \mapsto t_i \text{\; and \;} s_j \mapsto s_{n+1-j}
											$$
											(see \cite[Exercise 3.2.3]{AF_Equi_Coho_AG}).
											Let $\mathbb{E}_m = (\CC^m\setminus 0)^n \times (\CC^m\setminus 0)^n$.
											Then $\HH_T^i(\mathcal{M}) = \HH^i(\mathbb{E}_m \times^T \mathcal{M})$ for $i < 2m-1$
											and the pullback map $f^*$ can be constructed by taking the pullback in (usual) cohomology along the equivariant map
											$$
											\mathbb{E}_m \times^T \mathcal{M}  \;\xrightarrow{(\gamma, f)} \; \mathbb{E}_m \times^T \mathcal{M}
											$$
											where $\gamma\colon \mathbb{E}_m \rightarrow \mathbb{E}_m$ is the involution given by $\gamma(u_1,\ldots,u_n,v_1,\ldots,v_n)=(u_1,\ldots,u_n,v_n,\ldots,v_1)$ for all $(u_1,\ldots,u_n,v_1,\ldots,v_n) \in \mathbb{E}_m$ (see \cite[Exercise 3.2.1]{AF_Equi_Coho_AG}).
											Since $\mathbb{E}_m \times^T \mathcal{M}$ is smooth, it follows that 
											$$
											f^*\left(\big[D_{w_0w}\big]^T\right) \;=\; \big[f^{-1}(D_{w_0w})\big]^T = \big[D^{w}\big]^T
											$$
											(see, e.g., \cite[Proposition A.3.2]{AF_Equi_Coho_AG}, \cite[\S B.3]{Fulton_Young_Tableaux}).
											The result of the lemma now follows by invoking \autoref{thm_double_Schubert_equiv}.
										\end{proof}

										We are now ready to introduce the two main objects of this section.
										
										\begin{definition}\label{d:matrix_Richardson_Var}
											For a pair of permutations $(w, u)$ in $S_n$, we have:
											\begin{enumerate}[\rm (i)]
												\item $D_u^w := D_u\cap D^w$ is the \emph{matrix Richardson variety}.
												\item $\fR_{w/u}(\ttt,\sss):=\fS_u(\ttt,\sss)\fS_{w_0 w}(\ttt,\sss')$ is the \emph{double Richardson polynomial}, where $\sss' = (s_n,\ldots,s_1)$ denotes the reverse of $\sss = (s_1,\ldots,s_n)$. 
											\end{enumerate}
										\end{definition}

										We point out that the matrix Richardson variety $D_u^w$ is a reduced and irreducible $T$-subvariety of $\Mat_{n,n}$.
										We have that $D_u^w$ is nonempty if and only if $w \ge u$ in the  Bruhat order, and when it is nonempty, it has dimension $\dim(D_u^w) = \ell(w) - \ell(u)$.
										For details on Richardson varieties, the reader is referred to, e.g., \cite{Richardson1992,BRION_FLAGS,Speyer2024}.
										
										Our main result in this section is the following theorem.
										
										\begin{theorem}\label{t:equiv_coho_richardson}
											For two permutations $u,w \in S_n$ with $w \ge u$ in Bruhat order,  the following statements hold: 
											\begin{enumerate}[\rm (i)]
												\item The double Richardson polynomial $\fR_{w/u}(\ttt,\sss)$ represents the equivariant class of the matrix Richardson variety $D_u^w$ in $\HH_T^\bullet(\Mat_{n,n})$.
												\item The {\rm(}sign-changed{\rm)} double Richardson polynomial $\fR_{w/u}(\ttt,-\sss)$ is a covolume polynomial. 
											\end{enumerate}
										\end{theorem}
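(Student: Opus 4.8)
The plan is to get part (i) by multiplying the two equivariant classes already computed in this section, and then to read part (ii) off \autoref{c:cor_equiv_matrices}. By \autoref{thm_double_Schubert_equiv} we have $[D_u]^T=\fS_u(\ttt,\sss)$, and by \autoref{l:cohomology_class_opp_sch} we have $[D^w]^T=\fS_{w_0 w}(\ttt,\sss')$ with $\sss'=(s_n,\dots,s_1)$. Since $D_u^w=D_u\cap D^w$ by \autoref{d:matrix_Richardson_Var}, part (i) is exactly the product formula
\[
[D_u^w]^T\;=\;[D_u]^T\cdot[D^w]^T\qquad\text{in }\HH_T^\bullet(\Mat_{n,n})=\ZZ[\ttt,\sss],
\]
because the right-hand side equals $\fS_u(\ttt,\sss)\,\fS_{w_0 w}(\ttt,\sss')=\fR_{w/u}(\ttt,\sss)$. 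As $\Mat_{n,n}$ is smooth, this product formula holds as soon as $D_u$ and $D^w$ meet properly, i.e.\ $\codim(D_u\cap D^w)=\codim D_u+\codim D^w=\ell(u)+\ell(w_0 w)$: the matrix Richardson variety $D_u^w$ is reduced and irreducible (as recorded in the paragraph after \autoref{d:matrix_Richardson_Var}), so a proper intersection is generically transverse along its unique component, and the equivariant intersection product on the smooth space $\Mat_{n,n}$ then gives $[D_u^w]^T=[D_u]^T\cdot[D^w]^T$. Equivalently, in the language of \autoref{rem_equiv_multdeg} one is claiming that the multidegrees $\cC(R/I_u;\ttt,\sss)$ and $\cC(R/I^w;\ttt,\sss)$ of the two defining primes multiply, which follows from Tor-independence once the intersection is proper, using that matrix Schubert varieties are Cohen--Macaulay \cite{Fulton1992,KNUTSON_MILLER_SCHUBERT}.

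So everything reduces to the properness of $D_u\cap D^w$ in $\Mat_{n,n}$, and this is the step I expect to be the main obstacle. The route I would take is to reduce it to the classical Richardson transversality theorem on the flag variety \cite{Richardson1992}. Restricting to the dense open $\GL_n\subset\Mat_{n,n}$ and pushing along the quotient $\pi\colon\GL_n\to\Fl$, the traces $D_u\cap\GL_n$ and $D^w\cap\GL_n$ are preimages of a Schubert variety and an opposite Schubert variety in $\Fl$ --- this is the content of Fulton's degeneracy-locus description \cite{Fulton1992} together with the fact that $\widetilde F_\bullet$ is the flag opposite to $F_\bullet$ --- and a Schubert variety and an opposite Schubert variety meet properly in $\Fl$. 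Since $w\ge u$, the resulting flag Richardson variety is nonempty, so $D_u\cap D^w$ meets $\GL_n$; because $D_u\cap D^w$ is irreducible, its intersection with $\GL_n$ is dense in it, so $\dim(D_u\cap D^w)=\dim B+\dim(\text{flag Richardson variety})=n^2-\ell(u)-\ell(w_0 w)$, the expected value. The subtle point here is the bookkeeping with the three flags $E_\bullet$, $F_\bullet$, $\widetilde F_\bullet$ and their relative positions under an invertible matrix.

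Part (ii) then needs no further geometry. By part (i), $\fR_{w/u}(\ttt,\sss)$ is the polynomial $C_{D_u^w}(t_1,\dots,t_n,s_1,\dots,s_n)$ representing the equivariant class of the irreducible $T$-subvariety $D_u^w\subset\Mat_{n,n}$, where $T=(\CC^*)^n\times(\CC^*)^n$ acts by $(g,h)\cdot M=gMh^{-1}$. Applying \autoref{c:cor_equiv_matrices} with $m=n$ shows that $C_{D_u^w}(t_1,\dots,t_n,-s_1,\dots,-s_n)=\fR_{w/u}(\ttt,-\sss)$ is a covolume polynomial, which is the assertion of (ii).
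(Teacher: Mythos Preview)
Your proof is correct and follows exactly the same strategy as the paper: multiply the equivariant classes of $D_u$ and $D^w$ using \autoref{thm_double_Schubert_equiv} and \autoref{l:cohomology_class_opp_sch}, then invoke \autoref{c:cor_equiv_matrices} for part (ii). The paper simply asserts the key equality $[D_u\cap D^w]^T=[D_u]^T\cdot[D^w]^T$ without further comment, whereas you supply the justification (proper intersection via Richardson transversality on $\Fl$, reducedness of $D_u^w$, Cohen--Macaulayness of matrix Schubert varieties to kill higher Tor); this extra detail is sound and fills in what the paper leaves implicit.
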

										\begin{proof}
											(i) We have the equality $[D_u^w]^T = [D_u \cap D^w]^T = [D_u]^T \cdot [D^w]^T$ in $\HH_T^\bullet(\Mat_{n,n})$.
											Therefore \autoref{l:cohomology_class_opp_sch} and \autoref{thm_double_Schubert_equiv} imply that $\fS_u(\ttt,\sss)\fS_{w_0 w}(\ttt,\sss') = \fR_{w/u}(\ttt,\sss)$ represents the equivariant class of the matrix Richardson variety $D_u^w$.
											
											(ii) By utilizing part (i) and \autoref{c:cor_equiv_matrices}, it follows that $\fR_{w/u}(\ttt, -\sss)$ is a covolume polynomial.
										\end{proof}
										
										\begin{remark}
											Since the product of two covolume polynomials is a covolume polynomial (see \cite[Corollary 2.14]{ALUFFI_COVOL}, \cite[Proposition 4.9]{RSW23}) and covolume polynomials are stable under certain linear transformations (see \cite[Theorem 2.13]{ALUFFI_COVOL}), to show that $\fR_{w/u}(\ttt,-\sss) = \fS_u(\ttt,-\sss)\fS_{w_0 w}(\ttt,-\sss')$ is a covolume polynomial, it suffices to prove that $\fS_w(\ttt,-\sss)$ is a covolume polynomial for all $w \in S_n$.
											In any case, we would need to apply \autoref{c:cor_equiv_matrices} to either matrix Richardson varieties or matrix Schubert varieties. 
										\end{remark}                                        
										
										The double Richardson polynomial specializes to many polynomials of interest:
										\begin{enumerate}[\rm (i)]
											\item $\fR_{w_0/u}(\ttt,\sss) = \fS_u(\ttt,\sss)$ is the double Schubert polynomial. 
											\item $\fR_{w_0/u}(\ttt,\mathbf{0}) = \fS_u(\ttt,\mathbf{0}) = \fS_u(\ttt)$ is the ordinary Schubert polynomial.
											\item We say that $\fR_{w/u}(\ttt) = \fR_{w/u}(\ttt,\mathbf{0})$ is the (ordinary) \emph{Richardson polynomial}.
										\end{enumerate}
										
										The next proposition shows that dually Lorentzian polynomials are discretely log-concave.
										
										\begin{proposition}
											\label{prop_disc_log_conc}
											If $h = \sum_\bn a_\bn \ttt^\bn$ is a dually Lorentzian polynomial, we have that 
											$$
											a_\bn^2 \;\,\geq\;\, a_{\bn + \ee_i - \ee_j}a_{\bn - \ee_i + \ee_j}.
											$$
										\end{proposition}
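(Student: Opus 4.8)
The plan is to transport the coefficientwise log-concavity enjoyed by genuine Lorentzian polynomials across the substitution $\ttt\mapsto\ttt^{-1}$ that is built into the definition of dual Lorentzianity.

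First I would record the relevant property of Lorentzian polynomials: if $f=\sum_{\bw}\frac{e_\bw}{\bw!}\,\ttt^\bw\in\RR[t_1,\dots,t_p]$ is Lorentzian of degree $D$, then $e_\bw^2\ge e_{\bw+\ee_i-\ee_j}\,e_{\bw-\ee_i+\ee_j}$ for all $\bw\in\NN^p$ and all $i,j\in[p]$, where we set $e_\alpha:=0$ whenever $\alpha\notin\NN^p$. This is standard (see, e.g., \cite{LORENTZIAN}), and the short proof I would include is the following. One may assume $|\bw|=D$ and $w_i,w_j\ge1$, since otherwise every coefficient in sight vanishes or the right-hand side is $0$. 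Then $\gamma:=\bw-\ee_i-\ee_j\in\NN^p$ has $|\gamma|=D-2$, so $\partial^\gamma f$ is a quadratic form whose Hessian $Q$ satisfies $Q_{aa}=e_{\gamma+2\ee_a}$ and $Q_{ab}=e_{\gamma+\ee_a+\ee_b}$ for $a\ne b$. By the defining property of Lorentzian polynomials, $Q$ has at most one positive eigenvalue, hence so does the principal $2\times2$ submatrix on rows and columns $i,j$ (Cauchy interlacing); since the diagonal entries of that submatrix are nonnegative, its determinant is $\le0$, which is precisely $e_{\gamma+2\ee_i}\,e_{\gamma+2\ee_j}\le e_{\gamma+\ee_i+\ee_j}^2$, i.e.\ $e_{\bw+\ee_i-\ee_j}\,e_{\bw-\ee_i+\ee_j}\le e_\bw^2$.

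Next I would feed $h$ into this. By hypothesis there is $\bm\in\NN^p$ with $\supp(h)\subseteq\{\bn\in\NN^p:\bn\le\bm\}$ such that $f:=N\big(\ttt^{\bm}h(\ttt^{-1})\big)$ is Lorentzian. Writing $h=\sum_\bn a_\bn\ttt^\bn$, one has $\ttt^{\bm}h(\ttt^{-1})=\sum_{\bw}a_{\bm-\bw}\,\ttt^{\bw}$ and therefore $f=\sum_{\bw}\frac{a_{\bm-\bw}}{\bw!}\,\ttt^{\bw}$, so in the normalization above the coefficients of $f$ are $e_\bw=a_{\bm-\bw}$. Applying the previous paragraph with $\bw=\bm-\bn$ gives $a_\bn^2\ge a_{\bn+\ee_i-\ee_j}\,a_{\bn-\ee_i+\ee_j}$ for all $\bn\le\bm$. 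For the remaining $\bn\in\NN^p$ — those with $n_l>m_l$ for some $l$ — we have $a_\bn=0$ because $\bn\notin\supp(h)$, and a one-line check of the $l$-th coordinate shows that at least one of $\bn+\ee_i-\ee_j$ and $\bn-\ee_i+\ee_j$ still has $l$-th coordinate $>m_l$, so the right-hand side is $0$ and the inequality is trivial. I do not foresee a genuine obstacle: the only delicate points are tracking the normalization operator $N$ through the substitution $\ttt\mapsto\ttt^{-1}$ (so that the normalized coefficients of $f$ are literally $a_{\bm-\bw}$, with no spurious factorials) and the bookkeeping of coefficients indexed by vectors outside $\NN^p$ or outside $\supp(h)$, which is exactly what extends the inequality from the support to all of $\NN^p$.
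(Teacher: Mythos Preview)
Your proposal is correct and follows the same approach as the paper: both observe that $N\big(\ttt^{\bm}h(\ttt^{-1})\big)=\sum_{\bn}\frac{a_\bn}{(\bm-\bn)!}\ttt^{\bm-\bn}$ is Lorentzian and read off the discrete log-concavity of the $a_\bn$ from that of the normalized coefficients. The only difference is that the paper simply cites \cite[Proposition~4.4]{LORENTZIAN} for the coefficientwise inequality of Lorentzian polynomials, whereas you supply the standard Hessian/Cauchy-interlacing argument inline; your bookkeeping for indices outside $\{\bn:\bn\le\bm\}$ is also correct but not needed in the paper's terser version.
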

										\begin{proof}
											Since $N\left(\ttt^\bm h(t_1^{-1},\ldots,t_p^{-1})\right) = \sum_{\bn}\frac{a_\bn}{(\bm-\bn)!}\ttt^{\bm-\bn}$ is Lorentzian for some $\bm \in \NN^p$ large enough, by \cite[Proposition 4.4]{LORENTZIAN} we get the inequality in the statement.
										\end{proof}
										
										The following remark states that the truncation of a dually Lorentzian polynomial is dually Lorentzian. 
										This result also follows from \cite[\S 3]{RSW23}, but here we give a short self-contained proof.
										
										\begin{remark}
											\label{prop_dual_Lorentzian_restrict}
											Let $P(\ttt) = \sum_\bn a_\bn\ttt^{\bn}$ be a dually Lorentzian polynomial.
											Let $\bw = (w_1,\ldots,w_n) \in \NN^p$.
											Then the truncation
											$$
											P'(\ttt) \;=\; \sum_{\bn \le \bw} a_\bn \ttt^{\bn}
											$$
											is dually Lorentzian.
										\end{remark}
										\begin{proof}
											Take $\bm \in \NN^p$ large enough such that $\bw \le \bm$ and 
											$$
											Q(\ttt) \;=\; N\left(\ttt^\bm P(t_1^{-1},\ldots,t_p^{-1})\right) \;=\; \sum_{\bn} \frac{a_\bn}{(\bm-\bn)!} \ttt^{\bm-\bn}
											$$ 
											is a Lorentzian polynomial. 
											Then
											$$
											\partial_{t_1}^{m_1-w_1}\cdots \partial_{t_p}^{m_p-w_p}Q(\ttt) \;=\; \sum_{\bn \le \bw} \frac{a_\bn}{(\bw-\bn)!} \ttt^{\bw-\bn}
											$$
											is also Lorentzian, and consequently  
											$$
											\sum_{\bn \le \bw} a_\bn \ttt^{\bn + \bm - \bw} \;=\; \ttt^{\bm - \bw} P'(\ttt)
											$$
											is dually Lorentzian.  
											Finally, \autoref{rem_dual_Lorentzian_monomial_product} implies that $P'(\ttt)$ is dually Lorentzian.
										\end{proof}
										
										We now obtain some consequences for certain polynomials of interest in algebraic combinatorics.
										
										\begin{corollary}
											\label{cor_polynomials_properties}
											The following polynomials have {\rm M}-convex support and are discretely log-concave:
											\begin{enumerate}[\rm (i)]
												\item {\rm(}sign-changed{\rm)} Double Richardson polynomials $\fR_{w/u}(\ttt,-\sss)$.
												\item Richardson polynomials $\fR_{w/u}(\ttt)$.
												\item {\rm(}sign-changed{\rm)} Double Schubert polynomials $\fS_u(\ttt,-\sss)$.
												\item Schubert polynomials $\fS_u(\ttt)$.
												\item Any truncation of the above polynomials. 
											\end{enumerate}
										\end{corollary}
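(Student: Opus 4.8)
The plan is to reduce every part to two properties of \emph{dually Lorentzian} polynomials: that their supports are M-convex and that they are discretely log-concave. The latter is exactly \autoref{prop_disc_log_conc}. For the former, I would argue as follows: if $h = \sum_\bn a_\bn\ttt^\bn$ is dually Lorentzian then $N\big(\ttt^\bm h(t_1^{-1},\ldots,t_p^{-1})\big)$ is Lorentzian for some large $\bm$, and a Lorentzian polynomial has M-convex support by definition; since $N$ only rescales coefficients by positive scalars, the set $\{\bm - \bn : a_\bn \ne 0\}$ is M-convex, and since the reflection $x \mapsto \bm - x$ carries M-convex sets to M-convex sets (a one-line check, swapping the roles of the two points in the exchange axiom), $\supp(h)$ is M-convex. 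As covolume polynomials are dually Lorentzian by \cite[Proposition 2.8]{ALUFFI_COVOL}, it then suffices to exhibit each of the polynomials in (i)--(v) as a dually Lorentzian polynomial.

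For (i) this is immediate from \autoref{t:equiv_coho_richardson}(ii): $\fR_{w/u}(\ttt,-\sss)$ is a covolume polynomial, hence dually Lorentzian. Part (iii) is the case $w = w_0$, since $\fS_u(\ttt,-\sss) = \fR_{w_0/u}(\ttt,-\sss)$. For (ii) I would observe that $\fR_{w/u}(\ttt) = \fR_{w/u}(\ttt,\mathbf 0)$ is obtained from $\fR_{w/u}(\ttt,-\sss)$ by setting the $s$-variables to $0$, which is the $\bw$-truncation with $\bw$ having large entries in the $t$-slots and $0$ in the $s$-slots, followed by deleting the (now absent) $s$-variables; the truncation stays dually Lorentzian by \autoref{prop_dual_Lorentzian_restrict}, and deleting a variable that does not occur preserves dual Lorentzianness (this reduces to the fact that multiplying by a power of an extra variable, and taking partial derivatives in it, both preserve the Lorentzian property). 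Part (iv) is then the case $w = w_0$ of (ii), and (v) is a direct application of \autoref{prop_dual_Lorentzian_restrict} to the polynomials produced in (i)--(iv).

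I do not expect a genuine obstacle here: the substantive content is already packaged in \autoref{t:equiv_coho_richardson} and \autoref{prop_dual_Lorentzian_restrict}. The only points requiring a moment's care are the two small closure statements for dually Lorentzian polynomials invoked above — stability of M-convexity under the reflection $x \mapsto \bm - x$, and stability of the dually Lorentzian property under adjoining or removing an unused variable — both of which are routine once one unwinds the definitions.
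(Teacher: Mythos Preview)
Your proposal is correct and follows essentially the same route as the paper: establish that each polynomial in the list is dually Lorentzian via \autoref{t:equiv_coho_richardson} and \autoref{prop_dual_Lorentzian_restrict}, and then deduce M-convex support and discrete log-concavity from general properties of dually Lorentzian polynomials. The only difference is cosmetic: where you supply a short direct argument (the reflection $x \mapsto \bm - x$ preserving M-convexity, and the harmless addition/removal of an unused variable), the paper simply cites \cite[\S 44.6f]{Schrijver} and \cite[Corollary~2.12]{ALUFFI_COVOL} for the M-convexity step and is otherwise terse about the specializations.
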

										\begin{proof}
											By \autoref{t:equiv_coho_richardson} and \autoref{prop_dual_Lorentzian_restrict},  these  polynomials and their truncations are dually Lorentzian. 
											
											Since all these polynomials are dually Lorentzian  and Lorentzian polynomials have M-convex support, \cite[§44.6f]{Schrijver} implies that the support of all these polynomials is M-convex (also, see \cite[Corollary 2.12]{ALUFFI_COVOL}).
											The discrete log-concavity statement follows from \autoref{prop_disc_log_conc}.
										\end{proof}          
										
										We now compare the Richardson polynomial $\fR_{w/u}(\ttt)$ with the \emph{skew Schubert polynomial} of Lenart and Sottile \cite{SKEWSCHUBERT} (also, see \cite[Remark 2.16]{WYSER_YONG}).
										
										\begin{remark}
											\label{rem_skew_Schubert}
											Let $w \ge u$ be two permutations in $S_n$.
											Lenart and Sottile \cite{SKEWSCHUBERT} defined the skew Schubert polynomial $\fS_{w/u}$ as the polynomial representative in normal form of the class of the \emph{Richardson variety} $X_u^w = X_u \cap X^w$ in the cohomology ring $\HH^\bullet(\Fl)$ of the full flag variety $\Fl$.
											The Borel presentation 
											$$
											\HH^\bullet(\Fl) \;=\; \frac{\ZZ[t_1,\ldots,t_n]}{\left(e_1(t_1,\ldots,t_n), \ldots, e_n(t_1,\ldots,t_n)\right)}
											$$
											(where $e_i(t_1,\ldots,t_n)$ is the $i$-th elementary symmetric polynomial) yields a $\ZZ$-basis given by the monomials $t_1^{a_1}\cdots t_n^{a_n}$ with $a_i \le n - i$.
											We say that a polynomial representative is in \emph{normal form} if it can be written as a $\ZZ$-linear combination of this distinguished monomial basis.
											Let $n = 4$ and consider the permutations $w = 3412 > 2143 = u$.
											The Richardson polynomial is given by 
											$$
											\fR_{3412/2143}(\ttt) \;=\; \fS_{2143}(\ttt)^2 \;=\; t_{1}^{4}+2\,t_{1}^{3}t_{2}+t_{1}^{2}t_{2}^{2}+2\,t_{1}^{3}t_{3}+2\,t_{1}^{2}t_{2}t_{3}+t_{1}^{2}t_{3}^{2}.
											$$
											However, the skew Schubert polynomial (the polynomial representative in normal form) is given by 
											$$
											\fS_{3412/2143}(\ttt) \;=\; t_{1}^{3}t_{2} \,+\, t_{1}^{3}t_{3} \,+\, t_{1}^{2}t_{2}t_{3}.
											$$
										\end{remark}          
										
										Due to \autoref{cor_polynomials_properties}, if the skew Schubert polynomial $\fS_{w/u}(\ttt)$ is a truncation of the Richardson polynomial $\fR_{w/u}(\ttt)$, then $\fS_{w/u}(\ttt)$ is a dually Lorentzian polynomial. 
										However, as in \autoref{rem_skew_Schubert}, $\fS_{w/u}(\ttt)$ may not coincide with a truncation of $\fR_{w/u}(\ttt)$.
										Thus we should ask the question below.

										\begin{question}
											Let $w \ge u$ be two permutations in $S_n$. 
											We ask the following:
											\begin{enumerate}[\rm (a)]
												\item Is the support of $\fS_{w/u}(\ttt)$ an M-convex set?
												\item Is $\fS_{w/u}(\ttt)$ discretely log-concave?
												\item Is $\fS_{w/u}(\ttt)$ a dually Lorentzian polynomial?
												\item Is $\fS_{w/u}(\ttt)$ a Lorentzian polynomial?
											\end{enumerate}
										\end{question}
										
										\section{Macaulay dual generators of cohomology rings}
										\label{sect_cohom_rings}
										
										In this section, we study the Macaulay dual generators of cohomology rings of smooth complex algebraic varieties. 
										We prove that under certain positivity assumptions the Macaulay dual generators are denormalized Lorentzian polynomials.

										\subsection{Gorenstein algebras over a base ring}
										\label{subsect_Gor}
										
										Before presenting our results on cohomology rings, which is our main interest, we develop basic ideas regarding the notion of Gorenstein algebras over a base ring. (These developments are probably known to the experts but we could not find a suitable reference for us.)
										For details on Gorenstein rings and duality results, the reader is referred to \cite[Chapter 21]{EISEN_COMM}, \cite[Chapter 3]{BRUNS_HERZOG}.
										Throughout this subsection we use the following setup. 
										
										\begin{setup}
											\label{setup_Gor}
											Let $A$ be a Noetherian ring (always assumed to be commutative and with identity) and $R$ be a positively graded  finitely generated algebra over $R_0 = A$.
											Choose a positively graded polynomial ring $S = A[x_1,\ldots,x_n]$ with a graded surjection $S \surjects R$ and write $R \cong S/I$ for some homogeneous ideal $I \subset S$.
											Let $\mm := (x_1,\ldots,x_n) \subset S$ be the graded irrelevant ideal.
											Let $\delta_i := \deg(x_i) > 0$ and $\delta := \delta_1+\cdots+\delta_n$.
											For any $\pp \in \Spec(A)$, we denote by $\kappa(\pp) := A_\pp/\pp A_\pp$ the residue field at $\pp$ and we set $S(\pp) := S \otimes_A \kappa(\pp) \cong \kappa(\pp)[x_1,\ldots,x_n]$ and $R(\pp):= R \otimes_A \kappa(\pp)$.
										\end{setup} 
										
										We are interested in the following relative notion of Gorenstein.
										
										\begin{definition}
											We say that $R$ is a \emph{Gorenstein algebra over $A$} if  the natural morphism $f\colon \Spec(R) \rightarrow \Spec(A)$ is a Gorenstein morphism (see \cite[\href{https://stacks.math.columbia.edu/tag/0C02}{Tag 0C02}]{stacks-project}).
											This means that $R$ is $A$-flat and the fiber $R(\pp) = R \otimes_A \kappa(\pp)$ is a Gorenstein ring for all $\pp \in \Spec(A)$. 	
											We also say that $R$ is an \emph{Artinian Gorenstein algebra over $A$} if $f\colon \Spec(R) \rightarrow \Spec(A)$ is a finite Gorenstein morphism.
										\end{definition}

										For a graded $S$-module  $M$, we denote the \emph{$A$-relative graded Matlis dual} by
										$$
										M^{\vee_A} \;=\; {}^*\Hom_A(M, A) \;:=\; \bigoplus_{\nu \in \ZZ} \Hom_A\left({M}_{-\nu}, A\right).
										$$ 
										
										\begin{remark}
											\label{rem_matlis}
											Let $L, M, N$ be finitely generated graded $R$-modules that are $A$-flat. 
											Then: 
											\begin{enumerate}[\rm (i)]
												\item For all $\nu \in \ZZ$, the graded part $M_\nu$ is a finitely generated flat $A$-module, and thus $A$-projective.        
												\item $\left(M^{\vee_A}\right)^{\vee_A} \cong M$.
												\item $\Ann_R\left(M^{\vee_A}\right) = \Ann_R(M)$.
												\item If $0 \rightarrow L \rightarrow M \rightarrow N \rightarrow 0$ is a short exact sequence, then $0 \rightarrow N^{\vee_A} \rightarrow M^{\vee_A} \rightarrow  L^{\vee_A} \rightarrow 0$ is also exact.
											\end{enumerate}
										\end{remark}										
										
										\begin{remark}
											\label{rem_dim_fibers}
											All the fibers $R(\pp) = R \otimes_A \kappa(\pp)$ of $f\colon \Spec(R) \rightarrow \Spec(A)$ have the same dimension in either of the following two cases:
											\begin{enumerate}[\rm (i)]
												\item $f$ is a finite morphism.
												\item $f$ is flat and $\Spec(A)$ is connected (e.g., $A$ is a domain).
											\end{enumerate}
										\end{remark}
										\begin{proof}
											(i) In this case it is clear that all the fibers $R(\pp)$ have Krull dimension $0$.
											
											(ii) Notice that each graded part $R_\nu$ is a locally-free $A$-module of constant rank (see, e.g., \cite[\href{https://stacks.math.columbia.edu/tag/00NX}{Tag 00NX}]{stacks-project}).
											Thus the Hilbert function of the graded $\kappa(\pp)$-algebra $R(\pp)$ is the same for all $\pp \in \Spec(A)$.
										\end{proof}
										
										\begin{remark}
											\label{rem_vanishing}
											Let $M$ be a finitely generated $R$-module. 
											Then $M = 0$ if and only if $M\otimes_A \kappa(\pp) = 0$ for all $\pp \in \Spec(A)$. 
										\end{remark}
										
										\begin{remark}
											A Cohen--Macaulay ring $S$ with a canonical module $\omega_S$ is Gorenstein if and only if $\omega_S$ is generated locally by one element (i.e., $\omega_{S_P} \cong \omega_S \otimes_S S_P$ is generated by one element for all $P \in \Spec(S)$).
										\end{remark}
										
										The next proposition is similar to \cite[Lemma 2.10]{FIB_FULL}.
										
										\begin{proposition}
											\label{prop_gor_char}
											Assume that $f\colon \Spec(R) \rightarrow \Spec(A)$ is flat and that either $\Spec(A)$ is connected or $f$ is finite.
											Let $e$ be the common dimension of the fibers of $f$ {\rm(}see \autoref{rem_dim_fibers}{\rm)}.
											Then $R$ is Gorenstein over $A$ if and only if the following two conditions are satisfied:
											\begin{enumerate}[\rm (a)]
												\item $\Ext_S^{i}(R, S) = 0$ for all $0 \le i \le n$ such that $i \neq n-e$.
												\item $\Ext_S^{n-e}(R, S)$ is $A$-flat and it is generated locally by one element as an $R$-module.
											\end{enumerate}
										\end{proposition}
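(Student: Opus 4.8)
The plan is to reduce the statement to a fibrewise criterion over the residue fields $\kappa(\pp)$ and then transport that criterion to $\Spec(S)$ by a base-change argument built on the $A$-flatness of $R$, following the line of \cite[Lemma 2.10]{FIB_FULL}. First I would record the fibrewise description. By \autoref{rem_dim_fibers}, each fibre $R(\pp)=R\otimes_A\kappa(\pp)$ is a graded quotient of the regular ring $S(\pp)=\kappa(\pp)[x_1,\ldots,x_n]$ of dimension $n$, and all of them have the same dimension $e$. Graded local duality over $S(\pp)$ (see \cite[Chapter 21]{EISEN_COMM}, \cite[Chapter 3]{BRUNS_HERZOG}) then gives: $R(\pp)$ is Gorenstein if and only if $\Ext_{S(\pp)}^i(R(\pp),S(\pp))=0$ for all $i\neq n-e$ and $\Ext_{S(\pp)}^{n-e}(R(\pp),S(\pp))$ — which, up to a degree shift, is the canonical module $\omega_{R(\pp)}$ — is generated by one element over $R(\pp)$; the vanishing forces $R(\pp)$ to be Cohen--Macaulay, and a Cohen--Macaulay graded algebra is Gorenstein exactly when its canonical module is cyclic, equivalently locally free of rank one.

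Next I would set up the base change. Choose a resolution $F_\bullet\to R$ by finitely generated free $S$-modules. Since $S$ is $A$-flat and $R$ is $A$-flat, breaking $F_\bullet$ into short exact sequences and using that the kernel of a surjection of $A$-flat modules with $A$-flat cokernel is again $A$-flat shows that $F_\bullet\otimes_A\kappa(\pp)$ is a free resolution of $R(\pp)$ over $S(\pp)$; in particular $\Hom_S(F_\bullet,S)\otimes_A\kappa(\pp)\cong\Hom_{S(\pp)}(F_\bullet\otimes_A\kappa(\pp),S(\pp))$ termwise. This produces a universal-coefficients (base-change) spectral sequence relating $\Tor_p^A(\Ext_S^q(R,S),\kappa(\pp))$ to $\Ext_{S(\pp)}^\bullet(R(\pp),S(\pp))$. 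The same flatness argument applied to $\Tor$ gives $\Tor_p^S(R,\kappa(P))\cong\Tor_p^{S(\pp)}(R(\pp),\kappa(P))$ for every prime $P\subset S$ lying over $\pp$, whence $\pd_S R=\sup_{\pp}\pd_{S(\pp)}R(\pp)$.

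For the implication ``$R$ Gorenstein over $A$ $\Rightarrow$ (a), (b)'': each $R(\pp)$ is Cohen--Macaulay of dimension $e$, so $\pd_{S(\pp)}R(\pp)=n-e$ for all $\pp$, hence $\pd_S R=n-e$ and $\Ext_S^i(R,S)=0$ for $i>n-e$ automatically. For the lowest nonvanishing $\Ext_S^i(R,S)$, the edge map of the spectral sequence identifies $\Ext_S^i(R,S)\otimes_A\kappa(\pp)$ with $\Ext_{S(\pp)}^i(R(\pp),S(\pp))$, which is zero for $i<n-e$ by Cohen--Macaulayness of the fibres; by \autoref{rem_vanishing} this forces $\Ext_S^i(R,S)=0$ for $i<n-e$ too, so (a) holds. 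Now the complex $\Hom_S(F_\bullet,S)$ has cohomology concentrated in degree $n-e$, hence is quasi-isomorphic to a shift $E[-(n-e)]$ of $E:=\Ext_S^{n-e}(R,S)$; tensoring with $\kappa(\pp)$ and comparing with the fibrewise Ext (again concentrated in degree $n-e$ since the fibres are Cohen--Macaulay) forces $\Tor_p^A(E,\kappa(\pp))=0$ for all $p>0$ and all $\pp$. As $E$ is a finitely generated graded $S$-module, each graded piece $E_\nu$ is a finitely generated $A$-module, so this vanishing makes every $E_\nu$, hence $E$, $A$-flat. Finally, base-changing stalks and using Nakayama, local cyclicity of $\omega_{R(\pp)}$ over $R(\pp)$ for every $\pp$ lifts to local cyclicity of $E$ over $R$, giving (b). Conversely, assuming (a) and (b): $\Hom_S(F_\bullet,S)$ is quasi-isomorphic to $E[-(n-e)]$ with $E$ an $A$-flat module, so $\Ext_{S(\pp)}^i(R(\pp),S(\pp))=0$ for $i\neq n-e$ and $\cong E\otimes_A\kappa(\pp)$ for $i=n-e$; by the fibrewise criterion each $R(\pp)$ is Cohen--Macaulay with locally cyclic canonical module $\omega_{R(\pp)}\cong E\otimes_A\kappa(\pp)$ (up to shift), hence Gorenstein, and since $R$ is $A$-flat by hypothesis, $R$ is Gorenstein over $A$.

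I expect the main obstacle to be the base-change step: making precise, from the $A$-flatness of $R$, that $\Hom_S(-,S)$ commutes with $-\otimes_A\kappa(\pp)$ along the chosen free resolution, and extracting from the resulting spectral sequence both the vanishing of $\Ext_S^i(R,S)$ away from $i=n-e$ and the $A$-flatness of $\Ext_S^{n-e}(R,S)$ — the Cohen--Macaulay/Gorenstein hypothesis on the fibres being exactly what is needed to force the requisite degeneration.
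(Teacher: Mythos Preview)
Your overall strategy coincides with the paper's: reduce to a fibrewise Gorenstein criterion and transport it via base change for $\Ext_S^\bullet(R,S)$, exactly in the spirit of \cite[Lemma 2.10]{FIB_FULL}. The converse direction (conditions (a), (b) $\Rightarrow$ Gorenstein over $A$) is fine, and so is your use of $\pd_S R=\sup_\pp \pd_{S(\pp)}R(\pp)$ to obtain $\Ext_S^i(R,S)=0$ for $i>n-e$ in the forward direction.

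The gap is in the forward direction for $i<n-e$. Your claim that for the \emph{lowest} nonvanishing index $i_0$ the edge map
\[
\Ext_S^{i_0}(R,S)\otimes_A\kappa(\pp)\;\longrightarrow\;\Ext_{S(\pp)}^{i_0}(R(\pp),S(\pp))
\]
is an identification is not correct in general: in the spectral sequence $E_2^{p,q}=\Tor_{-p}^A(\Ext_S^q(R,S),\kappa(\pp))\Rightarrow\Ext_{S(\pp)}^{p+q}$, the term $E_2^{0,i_0}$ receives differentials from $E_r^{-r,\,i_0+r-1}=\Tor_r^A(\Ext_S^{i_0+r-1}(R,S),\kappa(\pp))$, and these can be nonzero precisely because you have not yet established that $\Ext_S^{n-e}(R,S)$ is $A$-flat (nor that the intermediate $\Ext$'s vanish). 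So the argument is circular at this point: you need flatness of $E=\Ext_S^{n-e}(R,S)$ to kill the incoming differentials, but you deduce that flatness only \emph{after} assuming the cohomology is already concentrated in degree $n-e$. (A small example over $A=k[s,t]$: take $K^1=A^2\to K^2=A$, $(a,b)\mapsto sa+tb$; then $H^1(K)\cong A$ via $(t,-s)$, yet the edge map $H^1(K)\otimes k\to H^1(K\otimes k)$ sends the generator to $(0,0)$.)

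The paper circumvents this by quoting the exchange properties for local $\Ext$ (from \cite{LONSTED_KLEIMAN,ALTMAN_KLEIMAN_COMPACT_PICARD}): (i) if the base-change map $\gamma_\pp^i$ is surjective then it is an isomorphism, and (ii) if $\gamma_\pp^i$ is surjective then $\gamma_\pp^{i-1}$ is surjective iff $\Ext_S^i(R,S)$ is $A$-flat at $\pp$. One then runs a \emph{descending} induction from $i=n$ (where vanishing is by Hilbert's syzygy theorem) down through $i=n-e$, obtaining simultaneously the vanishing of $\Ext_S^i(R,S)$ for $i>n-e$, the flatness of $\Ext_S^{n-e}(R,S)$, and then the vanishing for $i<n-e$. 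Your spectral sequence encodes the same information, but the correct way to extract it is top-down rather than bottom-up; alternatively, once you have truncated to a bounded complex of flat modules in degrees $0,\ldots,n-e$ whose fibres are acyclic below the top, invoke the standard acyclicity/flatness lemma for such complexes rather than an edge-map argument at the bottom. The Fitting-ideal step the paper uses for the local cyclicity of $E$ is equivalent to your Nakayama argument.
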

										\begin{proof}
											Here we use the property of exchange for local Ext modules (see \cite[Theorem A.5]{LONSTED_KLEIMAN}, \cite[Theorem 1.9]{ALTMAN_KLEIMAN_COMPACT_PICARD}).
											For any $\pp \in \Spec(A)$, consider the natural base change maps 
											$$
											\gamma_\pp^i\colon\; \Ext_S^{i}(R, S) \otimes_A \kappa(\pp) \;\rightarrow\; \Ext_{S(\pp)}^{i}(R(\pp), S(\pp)).
											$$
											We have the following two important properties:
											\begin{enumerate}[\rm (i)]
												\item If $\gamma_\pp^i$ is surjective, then it is an isomorphism.  
												\item If $\gamma_\pp^i$ is surjective, then $\gamma_\pp^{i-1}$ is surjective if and only if $\Ext_S^{i}(R, S) \otimes_A A_\pp$ is $A_\pp$-flat.
											\end{enumerate}
											For all $i > n$, since each $\gamma_\pp^i$ is surjective by Hilbert's syzygy theorem, we obtain the vanishing $\Ext_S^{i}(R, S) = 0$ (see \autoref{rem_vanishing}).
											For all $\pp \in \Spec(A)$, recall that the fiber $R(\pp)$ is a Cohen--Macaulay ring if and only if $\Ext_{S(\pp)}^i(R(p), S(\pp)) = 0$ for all $i \neq n-e$ (this follows for instance by the local duality theorem).

											Let $i > n-e$.
											If each $\gamma_\pp^{i+1}$ is surjective and $\Ext_S^{i+1}(R, S)=0$, it follows that $\gamma_\pp^{i}$ is surjective, and this implies that $\Ext_S^{i}(R, S) = 0$ if and only if $\Ext_{S(\pp)}^i(R(\pp), S(\pp)) = 0$ for all $\pp \in \Spec(A)$.
											Hence, by descending induction on $i$, we obtain $\Ext_S^{i}(R, S) = 0$ for all $i > n-e$ if and only if  $\Ext_{S(\pp)}^i(R(\pp), S(\pp)) = 0$ for all $i > n-e$ and $\pp \in \Spec(A)$. 
											
											So we assume that $\Ext_S^{i}(R, S) = 0$ for all $i > n-e$ (and, equivalently, that $\Ext_{S(\pp)}^i(R(\pp), S(\pp)) = 0$ for all $i > n-e$ and $\pp \in \Spec(A)$).
											
											Since each $\gamma_\pp^{n-e+1}$ is surjective and $\Ext_S^{n-e+1}(R, S)=0$, it follows that each $\gamma_\pp^{n-e}$ is surjective.
											In turn, this implies that $\gamma_\pp^{n-e-1}$ is surjective for all $\pp \in \Spec(A)$ if and only if $\Ext_S^{n-e}(R, S)$ is $A$-flat.
											Similarly, by descending induction on $i$, we can show that $\Ext_{S(\pp)}^i(R(\pp), S(\pp)) = 0$ for all $i < n-e$ and $\pp \in \Spec(A)$ if and only if $\Ext_S^i(R, S) = 0$ for all $i < n-e$ and $\Ext_S^{n-e}(R, S)$ is $A$-flat.
											
											Therefore, in our current setting, we have shown that $f\colon \Spec(R) \rightarrow \Spec(A)$ is a Cohen--Macaulay morphism (see \cite[\href{https://stacks.math.columbia.edu/tag/045Q}{Tag 045Q}]{stacks-project}) if and only if $\Ext_S^i(R, S) = 0$ for all $0 \le i \le n$ with $i \neq n-e$ and $\Ext_S^{n-e}(R, S)$ is $A$-flat.
											
											Let $M = \Ext_S^{n-e}(R, S)$.
											To conclude the proof, we may assume that $f$ is a Cohen--Macaulay morphism, and we need to show that $M$ can be generated locally by one element if and only if $M \otimes_A \kappa(\pp) \cong \Ext_{S(\pp)}^{n-e}(R(\pp), S(\pp))$ can be generated by one element for all $\pp \in \Spec(A)$.
											This can be proved using basic properties of Fitting ideals (see \cite[\S 20.2]{EISEN_COMM}).
											Indeed, the claim follows by \cite[Proposition 20.6]{EISEN_COMM} because ${\rm Fitt}_1(M) = R$ if and only if ${\rm Fitt}_1(M \otimes_A \kappa(\pp)) = \left({\rm Fitt}_1(M)\right)R(\pp) = R(\pp)$ for all $\pp \in \Spec(A)$ (again, see \autoref{rem_vanishing}).
											So the proof is complete.
										\end{proof}
										
										Motivated by the above proposition, when $R$ is Gorenstein over $A$ and $e$ is the common dimension of the fibers, we say that 
										$$
										\omega_{R/A} \;:=\; \Ext_S^{n-e}\big(R, S(-\delta)\big)
										$$ 
										is a relative canonical module of $R$ over $A$.
										
										Assume that $R$ is a finitely generated $A$-module.
										Let $d > 0$ be a positive integer and $\rho\colon R \rightarrow A(-d)$ be a graded $A$-linear map (following tradition we call this map an \emph{orientation}).
										We say that \emph{$R$ satisfies Poincar\'e duality with respect to the orientation $\rho$} if 
										$$
										R \otimes_A R \;\rightarrow\; A(-d), \quad r_1 \otimes r_2 \mapsto \rho(r_1r_2)
										$$
										is a perfect pairing. 
										This means that the pairing induces a graded isomorphism $R \cong {}^*\Hom_A(R, A)(-d)$ of $R$-modules.
										In particular, we have perfect pairings $R_i \otimes_A R_{d-i} \rightarrow A$ for all $0\le i \le d$.
										We also denote by 
										$$
										\rho\colon R_d \xrightarrow{\;\cong\;} A
										$$ 
										the induced isomorphism.
										Since we are assuming that $R$ is a finitely generated $A$-module, we actually have that ${}^*\Hom_A(R, A)$ coincides with $\Hom_A(R, A)$ (see \cite[Exercise 1.5.19]{BRUNS_HERZOG}), but we prefer the notation ${}^*\Hom_A(R, A)$ to stress our interest in the graded parts of $R$.
										
										\begin{remark}
											\label{rem_hom_tensor_adj}
											By the Hom-tensor adjunction, we have the isomorphism 
											$$
											\Hom_R\left(R, {}^*\Hom_A(R, A(-d))\right) \;\cong\; {}^*\Hom_A\left(R \otimes_A R, A(-d)\right).
											$$
											Thus having a perfect pairing $R \otimes_A R \rightarrow A(-d)$ is the same as having $R \cong {}^*\Hom_A(R, A)(-d)$.
										\end{remark}
										
										We consider the inverse polynomial ring $T = A[y_1,\ldots,y_n]$ where $y_i$ is identified with $x_i^{-1}$.
										The $S$-module structure of $T$ is given by setting that $$
										x_1^{\alpha_1}\cdots x_{n}^{\alpha_n} \,\cdot\, y_1^{\beta_1}\cdots y_{n}^{\beta_n} \;\;=\;\; 
										\begin{cases}
											y_1^{\beta_1-\alpha_1}\cdots y_{n}^{\beta_n-\alpha_n} & \text{if $\beta_i \ge \alpha_i$ for all $1 \le i \le n$} \\
											0 & \text{otherwise.}
										\end{cases}
										$$
										Then $T$ is a negatively graded polynomial ring with $\deg(y_i)=-\delta_i$.
										We have the natural isomorphisms
										$$
										T \;\cong\; {}^*\Hom_A(S, A) \;\cong\; \HL^{n}\big(S\big)(-\delta)
										$$
										of graded $S$-modules. 
										Recall the basic computation $\HL^{n}(S) \cong \frac{1}{x_1\ldots x_n} A[x_1^{-1},\ldots,x_n^{-1}]$.
										
										The following theorem extends well-known results about Artinian Gorenstein algebras (over a field) to our current relative setting over a Noetherian base ring. 										
										
										\begin{theorem}
											\label{thm_rel_Gor}
											Assume \autoref{setup_Gor} and that $f\colon \Spec(R) \rightarrow \Spec(A)$ is a finite flat morphism and that $R$ satisfies Poincar\'e duality with respect to an orientation $\rho\colon R \rightarrow A(-d)$. 
											Then the following statements hold: 
											\begin{enumerate}[\rm (i)]
												\item $R$ is Artinian Gorenstein over $A$.
												\item We have the isomorphisms $\omega_{R/A} = \Ext_S^{n}(R, S(-\delta)) \cong {}^*\Hom_A(R, A) \cong R(d)$.
												\item Consider the inverse polynomial 
												$$
												G_R(y_1,\ldots,y_n) \;=\; \sum_{\alpha_1\delta_1 + \cdots + \alpha_n\delta_n = d} \rho\big(x_1^{\alpha_1}\cdots x_n^{\alpha_n}\big) y_1^{\alpha_1}\cdots y_n^{\alpha_n}  \;\;\in\;\; T = A[y_1,\ldots,y_n].
												$$
												Then the presenting ideal of $R = S/I$ is given as the annihilator of $G_R${\rm:}
												$$
												I \;=\; \lbrace g \in S \mid g \cdot G_R = 0 \rbrace.
												$$			
											\end{enumerate}
										\end{theorem}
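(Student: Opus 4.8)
The plan is to establish the three parts in order, relying on \autoref{prop_gor_char} together with a relative form of graded local duality over the base ring $A$. For part (i): since $f$ is finite, $R$ is a finitely generated $A$-module, and being also $A$-flat it is finitely generated projective over $A$, so every graded component $R_\nu$ is finitely generated projective over $A$. The Poincar\'e duality pairing identifies $R_i \xrightarrow{\;\cong\;} \Hom_A(R_{d-i}, A)$ for all $0 \le i \le d$, an isomorphism of finitely generated projective $A$-modules, which therefore remains an isomorphism after $-\otimes_A\kappa(\pp)$ for every $\pp\in\Spec(A)$. Hence each fiber $R(\pp) = R\otimes_A\kappa(\pp)$ is a graded Artinian $\kappa(\pp)$-algebra satisfying Poincar\'e duality with the induced orientation, so $R(\pp)$ is a graded Artinian Gorenstein ring by the classical theory \cite[Chapter 21]{EISEN_COMM}. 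As $R$ is $A$-flat and $f$ is finite, $f$ is then a finite Gorenstein morphism, i.e.\ $R$ is Artinian Gorenstein over $A$.

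For part (ii), the fibers of $f$ have dimension $e = 0$, so \autoref{prop_gor_char} applied to the now-established fact that $R$ is Gorenstein over $A$ yields $\Ext_S^i(R, S) = 0$ for all $i \neq n$, that $\omega_{R/A} = \Ext_S^n(R, S(-\delta))$ is $A$-flat, and, as in its proof, that the base-change maps $\gamma^i_\pp$ are isomorphisms. To identify $\omega_{R/A}$ with ${}^*\Hom_A(R, A)$, note first that $\mm^{d+1}R = 0$ (as $R$ vanishes in degrees $> d$), so $R$ is $\mm$-torsion with $\HL^0(R) = R$ and $\HL^i(R) = 0$ for $i \neq 0$. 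Using $\HL^n(S) \cong {}^*\Hom_A(S, A)(\delta)$ — whence $\HL^n(S(-\delta)) \cong {}^*\Hom_A(S, A)$ carries a natural residue map to $A$, given by projection onto the degree-$0$ component followed by evaluation at $1$ — the Yoneda pairing
$$
\Ext_S^n\big(R, S(-\delta)\big) \times \HL^0(R) \longrightarrow \HL^n\big(S(-\delta)\big) \longrightarrow A
$$
defines a natural $A$-linear map $\Theta\colon \Ext_S^n(R, S(-\delta)) \to {}^*\Hom_A(R, A)$. This map commutes with base change to each $\kappa(\pp)$, where it becomes the classical graded local-duality isomorphism for the Artinian Gorenstein $\kappa(\pp)$-algebra $R(\pp)$; since both its source (by \autoref{prop_gor_char}) and its target are finitely generated $A$-flat modules, a morphism that is an isomorphism modulo every $\pp$ is an isomorphism. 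Hence $\omega_{R/A} \cong {}^*\Hom_A(R, A)$, and the remaining isomorphism ${}^*\Hom_A(R, A) \cong R(d)$ is precisely the restatement of Poincar\'e duality recorded in \autoref{rem_hom_tensor_adj}.

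For part (iii), under the identification $T \cong {}^*\Hom_A(S, A)$ a graded $A$-linear functional on $S$ annihilating $I$ corresponds to an element $G \in T$ with $I \cdot G = 0$, so ${}^*\Hom_A(R, A) = {}^*\Hom_A(S/I, A) \cong I^{\perp_A} := \{G \in T \mid I \cdot G = 0\}$ as graded $S$-modules. By part (ii) this module is isomorphic to $R(d)$, hence cyclic over $S$; tracing the generator $1 \in R(d)_{-d} = R_0$ through the Poincar\'e duality isomorphism $R(d) \cong {}^*\Hom_A(R, A)$ (which carries it to $\rho\colon R_d \to A$) and then through $T \cong {}^*\Hom_A(S, A)$ identifies it with $G_R = \sum_{\alpha_1\delta_1 + \cdots + \alpha_n\delta_n = d} \rho(x_1^{\alpha_1}\cdots x_n^{\alpha_n})\, y_1^{\alpha_1}\cdots y_n^{\alpha_n}$. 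Therefore $I^{\perp_A} = S \cdot G_R$, and the map $g \mapsto g \cdot G_R$ gives $S/\Ann_S(G_R) \cong S \cdot G_R = I^{\perp_A} \cong R(d)$, so in particular $S/\Ann_S(G_R) \cong R$ as $A$-modules. A direct degree count (using $I_0 = 0$) shows $I \cdot G_R = 0$, so $I \subseteq \Ann_S(G_R)$; composing the induced surjection $R = S/I \twoheadrightarrow S/\Ann_S(G_R)$ with an isomorphism $S/\Ann_S(G_R) \cong R$ produces a surjective $A$-endomorphism of the finitely generated $A$-module $R$, which is necessarily injective, forcing $I = \Ann_S(G_R)$.

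The step I expect to be the main obstacle is the relative graded local duality used in part (ii): one must carefully construct the residue/trace map on $\HL^n(S(-\delta))$ and the Yoneda pairing over the base $A$, and verify their compatibility with the base-change maps $\gamma^i_\pp$ of \autoref{prop_gor_char}, so that the reduction to the classical statement with field coefficients is legitimate. Once this is in place, parts (i) and (iii) are essentially formal, the only subtlety in (iii) being the standard fact that a surjective endomorphism of a finitely generated module over a commutative ring is an isomorphism.
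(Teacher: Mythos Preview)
Your proposal is correct, and it reaches the same conclusions through a mild reorganization of the paper's argument. For parts (i) and (ii) the paper proceeds in one stroke: since $R=\HL^0(R)$ and $\HL^i(R)=0$ for $i\ge 1$, it invokes the relative graded local duality theorem of \cite[Theorem~A]{FIB_FULL} to get $\Ext_S^{n-i}(R,S(-\delta))\cong{}^*\Hom_A(\HL^i(R),A)$ for all $i$, which simultaneously yields the Ext vanishing, the identification $\omega_{R/A}\cong R^{\vee_A}$, and (via \autoref{prop_gor_char}) the Gorenstein property. You instead first establish Gorenstein-ness by the more elementary route of base-changing the Poincar\'e pairing to each fiber, and only afterwards build the local-duality map $\Theta$ by hand and check it fiberwise; this is exactly the content behind the citation, so your honest flag that this is ``the main obstacle'' is well placed. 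For part (iii) the two arguments diverge only at the very end: once $I^{\perp_A}=S\cdot G_R$ is known, the paper observes that $\Ann_R(R^{\vee_A})=\Ann_R(R)=0$ (\autoref{rem_matlis}(iii)), so $\Ann_S(G_R)=\Ann_S(I^{\perp_A})=I$ directly, which is a little cleaner than your surjective-endomorphism trick (and avoids passing through an ungraded isomorphism $R(d)\cong R$). Both routes are sound; the paper's is shorter because it leans on the cited duality result and \autoref{rem_matlis}, while yours is more self-contained.
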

										\begin{proof}
											Since $R = \HL^0(R)$ is $A$-flat and $\HL^{i}(R) = 0$ for $i \ge 1$, the generic version of the local duality theorem given in \cite[Theorem A]{FIB_FULL} yields the isomorphisms $\Ext_S^{n-i}(R, S(-\delta)) \cong {}^*\Hom_A(\HL^i(R), A)$ for all $i$.
											Therefore, $R$ is Gorenstein over $A$ by \autoref{prop_gor_char} and we have the isomorphisms $\omega_{R/A} = \Ext_S^{n}(R, S(-\delta)) \cong {}^*\Hom_A(R, A) \cong R(d)$.
											So the proofs of part (i) and (ii) are complete.
											
											Let $E = I^{\perp_A} =\lbrace G \in T \mid g \cdot G = 0 \text{ for all $g \in I$} \rbrace$ be the (relative) inverse system of $I$.
											We have the natural isomorphisms
											$$
											E \;\cong\; \Hom_S\big(R, T\big) \;\cong\; \Hom_S\big(R, {}^*\Hom_A(S, A)\big) \;\cong\; {}^*\Hom_A(R, A) = R^{\vee_A}.
											$$
											The isomorphism $E \cong R^{\vee_A}$ can be obtained by dualizing the short exact sequence $0 \rightarrow I \rightarrow S \rightarrow R \rightarrow 0$ and writing 
											$$
											0 \;\rightarrow\; E \cong R^{\vee_A}  \;\rightarrow\; T \cong S^{\vee_A} \;\rightarrow\; T/E \cong I^{\vee_A} \;\rightarrow\; 0
											$$
											(see \autoref{rem_matlis}).
											By part (ii), there is an inverse polynomial $G \in T = A[y_1,\ldots,y_n]$ of degree $-d$ that generates $E$ as an $R$-module.
											Then the dual short exact sequence $0 \rightarrow I \rightarrow S \rightarrow E^{\vee_A} \rightarrow 0$ shows that 
											$$
											I \;=\; \lbrace g \in S \mid g \cdot G = 0 \rbrace.
											$$
											
											To conclude the proof, it suffices to show that $G_R$ generates the $A$-module $E_{-d}$.
											By construction, for any homogeneous polynomial $g \in S_d$, we have that 
											$$
											g \cdot G_R \;=\; \rho\left(\overline{g}\right),
											$$
											where $\overline{g}$ denotes the class of $g \in S_d$ in $R_d = S_d/I_d$.
											This implies that under the isomorphism $E_{-d} \cong \Hom_A(R_d, A)$, the inverse polynomial $G_R \in E_{-d}$ corresponds to the isomorphism $\rho \in \Hom_A(R_d, A)$.
											On the other hand, under the isomorphism $\Hom_A(R_d,A) \cong A$, the isomorphism $\rho$ should correspond to a unit in $A$. 
											This shows that $G_R$ is a generator of $E_{-d}$, and so the proof of the theorem is complete.
										\end{proof}

										\begin{definition}
											Following standard notation, we say that the inverse polynomial $G_R \in T = A[y_1,\ldots,y_n]$ presented in \autoref{thm_rel_Gor} is the \emph{Macaulay dual generator of $R$ over $A$}.
										\end{definition}

										It is well-known that, over a field $\kk$, an Artinian graded $\kk$-algebra is Gorenstein if and only if it is a Poincar\'e duality algebra (see, e.g., \cite[Theorem 2.79]{Lefschetz_book}).
										The following simple example shows that, over Noetherian base rings, we may not have the converse of \autoref{thm_rel_Gor}.

										\begin{example}    
											\label{examp_Gor_not_Poincare}
											Let $\kk$ be a field and consider the reduced $\kk$-algebra $A = \kk[t]/\left(t(t-1)\right)$.
											Consider the standard graded $A$-algebra 
											$$
											R \;=\; \frac{A[x,y]}{\left(tx,\; (t-1)y,\; x^2,\; xy,\; \left(x,y\right)^3 \right)}.
											$$
											Since we have the natural isomorphism $R \cong A \oplus \frac{A}{(t)}x \oplus \frac{A}{(t-1)}y \oplus \frac{A}{(t-1)}y^2$, it follows that $R$ is $A$-flat.
											Let $\pp_1 = (t) \subset A$ and $\pp_2=(t-1) \subset A$ be the two points of $\Spec(A)$.
											Then the two fibers 
											$$
											R(\pp_1) \;\cong\; \frac{\kk[x]}{\left(x^2\right)} \qquad \text{ and } \qquad R(\pp_2) \;\cong\; \frac{\kk[y]}{\left(y^3\right)}
											$$
											are Artinian Gorenstein algebras over $\kk = \kappa(\pp_1) = \kappa(\pp_2)$.
											Therefore, $R$ is Artinian Gorenstein over $A$. 
											Each of the fibers of $R$ are Poincar\'e duality algebras, but we do not have a \emph{global} Poincar\'e duality of $R$ over $A$ (as we introduced it).
											Indeed, $R$ cannot have a Poincar\'e duality over $A$ because, for instance, the top nonvanishing graded part $R_2 \cong \frac{A}{(t-1)}y^2$ is \emph{not} isomorphic to $A$. 
										\end{example}
										
										The next proposition shows that we get the converse of \autoref{thm_rel_Gor} when the base ring $A$ satisfies a Quillen--Suslin type of theorem (i.e., every finitely generated projective module is free).
										
										\begin{proposition}
											\label{prop_Gor_and_Poincare}
											Assume \autoref{setup_Gor} and one of the following conditions:
											\begin{enumerate}[\rm (a)]
												\item $A$ is a principal ideal domain {\rm(}e.g., $A = \ZZ${\rm)}. 
												\item $A$ is a polynomial ring over a field. 
												\item Or, more generally, every finitely generated projective $A$-module is $A$-free.
											\end{enumerate}
											Then, $R$ is Artinian Gorenstein over $A$ if and only if $R$ satisfies a Poincar\'e duality over $A$.
										\end{proposition}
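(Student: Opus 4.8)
The plan is to prove the two implications separately. I will reduce the implication ``Poincar\'e duality $\Rightarrow$ Artinian Gorenstein'' to the single assertion that $R$ is $A$-flat — after which \autoref{thm_rel_Gor}(i) applies directly, its remaining hypotheses being automatically satisfied — and this flatness is where I expect the main difficulty. I note at the outset that under each of (a), (b), (c) the scheme $\Spec(A)$ is connected and every finitely generated projective $A$-module is free, and that in case (a) even every finitely generated torsion-free — hence every finitely generated reflexive — $A$-module is free; also, \autoref{examp_Gor_not_Poincare} already shows that the implication ``Artinian Gorenstein $\Rightarrow$ Poincar\'e duality'' genuinely requires some hypothesis on $A$.

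\emph{Artinian Gorenstein $\Rightarrow$ Poincar\'e duality.} Assume $f$ is finite flat with Gorenstein fibers. Then all fibers have dimension $e=0$, each graded piece $R_\nu$ is a finitely generated projective $A$-module of constant rank, and $R=\bigoplus_{\nu\ge 0}R_\nu$ is supported in finitely many degrees; let $d$ be the top one. Each fiber $R(\pp)$ is a graded Artinian Gorenstein $\kappa(\pp)$-algebra with $R(\pp)_0=\kappa(\pp)$ and top degree $d$, so its one-dimensional socle lies in degree $d$; hence $\dim_{\kappa(\pp)}R(\pp)_d=1$ for every $\pp$, $R_d$ has rank $1$, and by (a)/(b)/(c) we get $R_d\cong A$. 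Using \autoref{prop_gor_char} and the generic local duality of \cite[Theorem A]{FIB_FULL} (applicable since $\HL^{0}(R)=R$ and $\HL^{i}(R)=0$ for $i\ge 1$), I identify the relative canonical module $\omega_{R/A}=\Ext_S^{n}(R,S(-\delta))\cong{}^*\Hom_A(R,A)=R^{\vee_A}$, which is $A$-flat and generated locally by one element over $R$, and for which the exchange argument in the proof of \autoref{prop_gor_char} gives $\Ext_S^{n}(R,S(-\delta))\otimes_A\kappa(\pp)\cong\omega_{R(\pp)}$ for all $\pp$. Since $(\omega_{R/A})_{-d}=\Hom_A(R_d,A)\cong A$, I pick a generator $\rho$, view it as an orientation $\rho\colon R\to A(-d)$, and let $\phi\colon R(d)\to\omega_{R/A}$ be the $R$-linear map with $\phi(1)=\rho$. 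For each $\pp$ the map $\phi\otimes_A\kappa(\pp)\colon R(\pp)(d)\to\omega_{R(\pp)}$ sends $1$ to a nonzero element of the one-dimensional bottom piece $(\omega_{R(\pp)})_{-d}$; as $\omega_{R(\pp)}\cong R(\pp)(d)$ is a cyclic $R(\pp)$-module generated in that degree, $\phi\otimes_A\kappa(\pp)$ is surjective, hence an isomorphism by comparing $\kappa(\pp)$-dimensions. Then $\Coker(\phi)\otimes_A\kappa(\pp)=0$ for all $\pp$ forces $\Coker(\phi)=0$ (\autoref{rem_vanishing}), and since $\omega_{R/A}$ is $A$-flat the sequence $0\to\Ker(\phi)\to R(d)\to\omega_{R/A}\to 0$ stays exact after $\otimes_A\kappa(\pp)$, so $\Ker(\phi)\otimes_A\kappa(\pp)=0$ for all $\pp$ and $\Ker(\phi)=0$. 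Thus $R(d)\cong R^{\vee_A}$, i.e.\ $R$ satisfies Poincar\'e duality over $A$ with orientation $\rho$ (in particular $\rho\colon R_d\xrightarrow{\;\cong\;}A$).

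\emph{Poincar\'e duality $\Rightarrow$ Artinian Gorenstein, and the main obstacle.} Now $R$ is a finitely generated $A$-module with $R\cong{}^*\Hom_A(R,A)(-d)$, so by \autoref{thm_rel_Gor}(i) it suffices to prove $A$-flatness of $R$. Comparing graded pieces in this isomorphism gives $R_\nu\cong\Hom_A(R_{d-\nu},A)$ for every $\nu$, so each $R_\nu$ is a dual module and hence reflexive. Under hypothesis (a) this already makes every $R_\nu$ free, so $R=\bigoplus_\nu R_\nu$ is $A$-flat and, with \autoref{thm_rel_Gor}(i), $R$ is Artinian Gorenstein over $A$. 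The main obstacle is the general case (b)/(c): a reflexive module need not be projective for purely module-theoretic reasons, so one cannot simply invoke freeness of finitely generated projectives — instead one must use the extra structure, namely that the multiplication pairings $R_\nu\otimes_A R_{d-\nu}\to R_d\cong A$ are perfect and compatible with the whole ring multiplication on $R$, to show directly that each $R_\nu$ is $A$-flat. Once flatness is established, the isomorphism $R\cong{}^*\Hom_A(R,A)(-d)$ base-changes to a Poincar\'e duality isomorphism on each fiber $R(\pp)$ over $\kappa(\pp)$, so $R(\pp)$ is Artinian Gorenstein over $\kappa(\pp)$ by the classical field case (e.g.\ \cite[Theorem 2.79]{Lefschetz_book}), and therefore $R$ is Artinian Gorenstein over $A$.
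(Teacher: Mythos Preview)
Your treatment of the implication ``Artinian Gorenstein $\Rightarrow$ Poincar\'e duality'' is essentially the paper's argument, only packaged slightly differently. Both proofs show $R_d\cong A$ by looking at fibers and invoking the free-projectives hypothesis, then pick a generator $\rho$ of $\Hom_A(R_d,A)$ and prove that the induced map $R(d)\to R^{\vee_A}$ is an isomorphism. The paper gets injectivity in one stroke via $\Ann_R(R^{\vee_A})=0$ (\autoref{rem_matlis}(iii)), whereas you argue fiberwise and then use $A$-flatness of $\omega_{R/A}$ to kill $\Ker(\phi)$; these are equivalent and equally valid.

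For the direction ``Poincar\'e duality $\Rightarrow$ Artinian Gorenstein'' you are in fact more careful than the paper. The paper simply writes ``We only need to show the converse of \autoref{thm_rel_Gor}'' and proves nothing further for this direction, i.e.\ it regards \autoref{thm_rel_Gor} as already supplying it. But \autoref{thm_rel_Gor} has ``$f$ finite flat'' among its hypotheses, and the paper's own definition of Poincar\'e duality does \emph{not} include $A$-flatness---it only gives $R\cong{}^*\Hom_A(R,A)(-d)$, hence each $R_\nu$ reflexive. Your observation that reflexive implies free over a PID, so that case~(a) is complete, is correct; and your honest flag that in cases~(b)/(c) reflexivity alone does not force projectivity (e.g.\ the first syzygy of $(x,y,z)$ over $k[x,y,z]$ is reflexive but not projective) identifies a genuine point the paper does not address. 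In other words, the gap you isolate is shared by the paper's proof as written; a charitable reading is that the authors tacitly intend ``perfect pairing'' over a base ring to include local freeness of the modules involved, in which case flatness is built in and \autoref{thm_rel_Gor} applies directly.
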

										\begin{proof}
											We only need to show the converse of \autoref{thm_rel_Gor}.
											Thus we assume that $R$ is Artinian Gorenstein over $A$ and that every finitely generated projective $A$-module is $A$-free.
											The latter condition is satisfied in the case of PIDs and polynomial rings over a field (see, e.g., \cite{LAM_SERRE_PROB}).
											Let $d$ be the top nonvanishing graded part of $R$.
											Due to our assumptions, $R_d \cong A$ and $R(\pp) = R \otimes_A \kappa(\pp)$ is an Artinian Gorenstein algebra for all $\pp \in \Spec(A)$.
											Consequently, for all $\pp \in \Spec(A)$, the canonical module $\omega_{R(\pp)} = {}^*\Hom_{\kappa(\pp)}\left(R(\pp), \kappa(\pp)\right) \cong {}^*\Hom_A(R, A) \otimes_A \kappa(\pp)$ of $R(\pp)$ is generated by its graded part of degree $-d$.
											This implies that ${}^*\Hom_A(R, A)$ is generated by its graded part of degree $-d$ (see \autoref{rem_vanishing}), and so by choosing a basis generator of the graded part $\Hom_A(R_d, A)$ of degree $-d$, we obtain an isomorphism $\left(R/\aaa\right)(d) \xrightarrow{\cong} {}^*\Hom_A(R, A)$ for some homogeneous ideal $\aaa \subset R$.
											On the other hand, since $\Ann_{R}\left({}^*\Hom_A(R, A)\right) = 0$ (see \autoref{rem_matlis}), we obtain the isomorphism $R(d) \xrightarrow{\cong} {}^*\Hom_A(R, A)$.
											The result of the proposition now follows from \autoref{rem_hom_tensor_adj}.
										\end{proof}
										
										\subsection{Cohomology rings as Artinian Gorenstein algebras over \texorpdfstring{$\ZZ$}{Z}}
										\label{subset_cohom_ring}
										
										In this subsection, we apply our developments in \autoref{subsect_Gor} to cohomology rings.
										Our main result in this direction is the following theorem.
										
										\begin{theorem}
											\label{thm_cohom_Macaulay}
											Let $X$ be a $d$-dimensional smooth complex algebraic variety. 
											Suppose that the cohomology ring $R = \bigoplus_{i=0}^d\HH^{2i}(X, \ZZ)$ is a flat $\ZZ$-algebra {\rm(}i.e., it is $\ZZ$-torsion-free{\rm)}.
											Let $\rho\colon R_d = \HH^{2d}(X, \ZZ) \rightarrow \ZZ$ be the natural degree map.
											Choose a graded presentation $R \cong  S/I$ where $S = \ZZ[x_1,\ldots,x_n]$, $\delta_i = \deg(x_i) > 0$, and $I \subset S$ is a homogeneous ideal.
											Let $\delta = \delta_1+\cdots+\delta_n$.
											Then the following statements hold: 
											\begin{enumerate}[\rm (i)]
												\item $R$ is Artinian Gorenstein over $\ZZ$.
												\item We have the isomorphisms $\omega_{R/\ZZ} = \Ext_S^n(R, S(-\delta)) \cong {}^*\Hom_\ZZ(R, \ZZ) \cong R(d)$.
												\item Consider the inverse polynomial ring $T = \ZZ[y_1,\ldots,y_n]$ with the identification $y_i = x_i^{-1}$.
												The ideal $I \subset S$ is given as the annihilator
												$$
												I \;=\; \lbrace g \in S \mid g \cdot G_R = 0 \rbrace
												$$			
												of the inverse polynomial 
												$$
												G_R(y_1,\ldots,y_n) \;=\; \sum_{\alpha_1\delta_1 + \cdots + \alpha_n\delta_n = d} \rho\big(x_1^{\alpha_1}\cdots x_n^{\alpha_n}\big) y_1^{\alpha_1}\cdots y_n^{\alpha_n}  \;\;\in\;\; T = \ZZ[y_1,\ldots,y_n].
												$$		
												\item 
												Assume that $X$ is complete and that each $x_i$ is equal to the first Chern class $c_1(L_i)$ of a nef line bundle $L_i$ on $X$.  
												Then the normalization 
												$$
												N(G_R) \;\in\; \RR[y_1,\ldots,y_n]
												$$ 
												of $G_R$ is a Lorentzian polynomial. 
											\end{enumerate}
										\end{theorem}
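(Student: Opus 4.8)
The plan is to identify $N(G_R)$ with a positive multiple of the volume polynomial of the nef classes $c_1(L_1),\ldots,c_1(L_n)$ and then invoke the theorem of Br\"and\'en and Huh \cite[Theorem 4.6]{LORENTZIAN}. First I would note that the hypothesis forces $\delta_i = 1$ for every $i$: since $c_1(L_i)$ lies in $\HH^2(X,\ZZ) = R_1$ and the presentation $S \surjects R$ is graded, each $x_i$ has degree $1$. Thus $\delta = n$, the condition $\alpha_1\delta_1+\cdots+\alpha_n\delta_n = d$ reads $|\alpha| = d$, and the coefficient $\rho(x_1^{\alpha_1}\cdots x_n^{\alpha_n})$ equals the intersection number $\int_X c_1(L_1)^{\alpha_1}\cdots c_1(L_n)^{\alpha_n}$, because $\rho\colon \HH^{2d}(X,\ZZ) \to \ZZ$ is the degree (integration) map and the product in $R$ is the cup product.

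Next I would apply the normalization operator termwise. As classes of even cohomological degree commute, the multinomial theorem gives
\[
\int_X\bigl(y_1 c_1(L_1) + \cdots + y_n c_1(L_n)\bigr)^d \;=\; \sum_{|\alpha| = d} \frac{d!}{\alpha!}\,\rho\bigl(x_1^{\alpha_1}\cdots x_n^{\alpha_n}\bigr)\, y^\alpha \;=\; d!\cdot N(G_R),
\]
so $N(G_R) = \tfrac{1}{d!}\,\vol$, where $\vol$ is the volume polynomial attached to the nef classes $c_1(L_i)$ in the sense of \autoref{rem_std_grading} (this is the exact analogue, in our grading, of the identity in \autoref{lem_std_grading}).

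The one point requiring care is that \cite[Theorem 4.6]{LORENTZIAN} is phrased for projective varieties, whereas $X$ is only assumed complete -- and this genuinely matters, since in the toric case of part (v) the variety $X_\Sigma$ need not be projective. To bridge the gap I would apply Chow's lemma to obtain a projective birational morphism $\pi\colon X' \to X$ with $X'$ projective. The pullbacks $\pi^* c_1(L_i)$ are again nef, and the projection formula yields
\[
\int_{X'} (\pi^* c_1(L_1))^{\alpha_1}\cdots(\pi^* c_1(L_n))^{\alpha_n} \;=\; \int_X c_1(L_1)^{\alpha_1}\cdots c_1(L_n)^{\alpha_n}
\]
for all $\alpha$ with $|\alpha| = d$; hence $G_R$, and therefore $N(G_R)$, is unchanged when computed on $X'$. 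Now \cite[Theorem 4.6]{LORENTZIAN} applies on $X'$ and shows that $\vol$ is Lorentzian; since the Lorentzian polynomials form a cone that is stable under positive scaling, its multiple $N(G_R) = \tfrac{1}{d!}\vol$ is Lorentzian as well. All the analytic content -- the Khovanskii--Teissier and Hodge-index inequalities for nef classes -- is absorbed into the cited Br\"and\'en--Huh theorem, so I expect no essential obstacle beyond the bookkeeping above; the closest thing to a subtlety is the passage to a projective model, which Chow's lemma handles cleanly.
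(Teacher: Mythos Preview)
Your argument for part (iv) is correct and is essentially identical to the paper's: both identify $N(G_R)$ with $\tfrac{1}{d!}$ times the volume polynomial of the nef classes, reduce from complete to projective via Chow's lemma and the projection formula, and then invoke \cite[Theorem 4.6]{LORENTZIAN}. Your remark that the hypothesis forces $\delta_i=1$ is a helpful clarification that the paper leaves implicit.

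The only omission is that your proposal does not address parts (i)--(iii). In the paper these are dispatched in one line by appealing to \autoref{thm_rel_Gor}: the cohomology ring $\HH^\bullet(X,\ZZ)$ satisfies Poincar\'e duality with respect to the degree map $\rho$, and by assumption it is $\ZZ$-flat, so the hypotheses of that theorem are met. (Also, your parenthetical reference to ``part (v)'' presumably points to the toric statement, which in this paper is recorded separately as \autoref{cor:kp} rather than as a part of \autoref{thm_cohom_Macaulay}.)
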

										\begin{proof}
											Parts (i), (ii), and (iii) follow from \autoref{thm_rel_Gor} because $\HH^\bullet(X, \ZZ)$ satisfies Poincar\'e duality and we are assuming it is $\ZZ$-torsion-free.
											
											We now concentrate on proving part (iv).
											Notice that the normalization 
											$$
											N(G_R) \;=\; \frac{1}{d!} \int \big(x_1y_1 + \cdots + x_n y_n\big)^d \cap [X] \;\;\in\;\; \RR[y_1,\ldots,y_n]
											$$ 
											of $G_R$ is the volume polynomial of $x_1,\ldots,x_n$ divided by $d!$.
											By using Chow's lemma, we can find a proper birational surjective morphism $f\colon X' \rightarrow X$ where $X'$ is a projective variety.
											Let $L_i' = f^*(L_i)$ and $x_i'=c_1(L_i')$.
											The projection formula yields the equality 
											$$
											N(G_R) \;=\; \frac{1}{d!} \int \big(x_1'y_1 + \cdots + x_n' y_n\big)^d \cap [X'] \;\;\in\;\; \RR[y_1,\ldots,y_n].
											$$
											Since each $L_i'$ is nef on $X'$, we have that $N(G_R)$ is a Lorentzian polynomial due to \cite[Theorem 4.6]{LORENTZIAN}. 
										\end{proof}

										We close this subsection with some specific computations. 
										
										\begin{example}[The flag variety $\mathcal{F}\ell_3$]
											By considering the Borel presentation 
											$$
											R \;=\; \HH^\bullet(\mathcal{F}\ell_3, \ZZ) \;=\; \frac{\ZZ[x_1,x_2,x_3]}{\left(x_1+x_2+x_3,\, x_1x_2+x_1x_3+x_2x_3,\, x_1x_2x_3 \right)}
											$$
											of the flag variety $\mathcal{F}\ell_3$, we obtain the Macaulay dual generator 
											$$
											G_R(y_1,y_2,y_3) \;=\; -y_{1}^{2}y_{2}+y_{1}^{2}y_{3}+y_{1}y_{2}^{2}-y_{1}y_{3}^{2}-y_{2}^{2}y_{3}+y_{2}y_{3}^{2}.
											$$
											We know that the elements $\widehat{x}_1=x_1$, $\widehat{x}_2=x_1+x_2$ and $\widehat{x}_3=x_1+x_2+x_3$ correspond to nef line bundles on $\mathcal{F}\ell_3$ (see, e.g., \cite[Chapter 10]{Fulton_Young_Tableaux}).
											By considering the following alternative presentation
											$$
											R \;=\; \HH^\bullet(\mathcal{F}\ell_3, \ZZ) \;=\; \frac{\ZZ[\widehat{x}_1,\widehat{x}_2,\widehat{x}_3]}{\left(\widehat{x}_3,\,\; 
												-\widehat{x}_{1}^{2}+\widehat{x}_{1}\widehat{x}_{2}-\widehat{x}_{2}^{2}+\widehat{x}_{2}\widehat{x}_{3},\,\;
												\widehat{x}_{1}^{2}\widehat{x}_{2}-\widehat{x}_{1}\widehat{x}_{2}^{2}-\widehat{x}_{1}^{2}\widehat{x}_{3}+\widehat{x}_{1}\widehat{x}_{2}\widehat{x}_{3} \right)},
											$$
											we obtain the following Macaulay dual generator
											$$
											G_R(\widehat{y}_1,\widehat{y}_2,\widehat{y}_3) \;=\; \widehat{y}_1^2\widehat{y}_2 + \widehat{y}_1\widehat{y}_2^2
											$$
											whose normalization is a Lorentzian polynomial (as predicted by \autoref{thm_cohom_Macaulay}(iv)).
										\end{example}

										\begin{example}[The Grassmannian $\Gr(2, 4)$]
											The cohomology ring of the Grassmannian $\Gr(2, 4)$ is given by 
											$$
											R \;=\; \HH^{\bullet}(\Gr(2,4), \ZZ) \;=\; \frac{\ZZ[x_1,x_2]}{\left(x_1^3-2x_1x_2,\,\, x_1^2x_2-x_2^2\right)},
											$$
											where $x_i$ is the $i$-th Chern class $c_i(\mathcal{S})$ of the universal subbundle $\mathcal{S}$ on $\Gr(2,4)$.
											Then the Macaulay dual generator is given by 
											$$
											G_R(y_1,y_2) \;=\; 2y_1^4 + y_1^2y_2 + y_2^2.
											$$
										\end{example}

										Finally, we provide a characteristic-free (over $\ZZ$) extension of the celebrated result of Khovanskii and Pukhlikov \cite{KP} showing that the cohomology ring (over $\QQ$) of certain toric varieties can be expressed in terms of differential operators that annihilate the volume polynomial. 
										We follow the notations in \cite{Cox2011}. 
										
										We briefly recall the notion of mixed volume (for more details, the reader is referred to \cite[Chapter IV]{EWALD}, \cite[Chapter 7] {CLO_UsingAG}).
										Let $P_1,\dots,P_n$ be lattice polytopes in $\ZZ^d$ and $\Vol(P_i)$ denote the Euclidean volume of $P_i$, where the unit hypercube has volume $1$. 
										The volume of the Minkowski sum of polytopes $\Vol(y_1P_1+\cdots+y_nP_n)$ is a homogeneous polynomial of degree $d$, and we write
										\[
										\Vol(y_1P_1+\cdots+y_nP_n) \;=\; \sum_{|\alpha| = d} \,\frac{d!}{\alpha!}
										\; \mathrm{MV}_\alpha(P_1,\dots,P_n) \, y_1^{\alpha_1}\cdots y_n^{\alpha_n}
										\]
										where $\mathrm{MV}_\alpha(P_1,\dots,P_n)$ is called the \emph{mixed volume} of $(P_1,\dots,P_n)$ of type $\alpha$.

										Let $X_\Sigma$ be a smooth complete toric variety and write $d = \dim(X_\Sigma)$, $n = |\Sigma(1)|$, and $m = {\rm rank}\left(\Pic(X_\Sigma)\right)$. 
										Let $\rho_1,\dots,\rho_n$ be the one-dimensional rays in $\Sigma(1)$. 
										Let $D_i$ be the divisor on $X_\Sigma$ associated to $\rho_i$.
										We have the following isomorphism 
										\[
										\ZZ[x_1,\dots,x_n]/(\mathcal{I}+\mathcal{J})  \;\;\xrightarrow{\;\cong\;}\;\; \HH^\bullet(X_\Sigma,\ZZ), \qquad x_{i}\mapsto D_i,
										\]
										where $\mathcal{I} = \left( x_{i_1},\dots,x_{i_s}\mid i_j \text{ are distinct and } \rho_{i_1}+\cdots+\rho_{i_s} \text{ is not a cone of }\Sigma\right)$ and $\mathcal{J}$ is the ideal generated by linear forms $\sum_{i=1}^n\langle m,u_i\rangle x_i$ for all $m\in M$ in the dual lattice $M$. 
										Moreover, the cohomology ring $\HH^\bullet(X_\Sigma,\ZZ)$ is $\ZZ$-torsion-free.
										This explicit description of $\HH^\bullet(X_\Sigma, \ZZ)$ was obtained by Jurkiewicz \cite{Jurkiewicz1980} and Danilov \cite{Danilov1978}.
										The \emph{volume polynomial} of $X_\Sigma$ is defined as 
										$$
										V(y_1,\dots,y_n) \;:=\; \int_{X_\Sigma} \left(D_1y_1 + \cdots + D_ny_n\right)^{d}.
										$$ 
										Assume that the classes of the divisors $\overline{D}_1,\dots, \overline{D}_m$ on $X_\Sigma$ give a basis of $\Pic(X_\Sigma)$.
										Then the \emph{reduced volume polynomial} of $X_\Sigma$ is given by  
										$$
										\overline{V}(y_1,\dots, y_m) \;:=\; \int_{X_\Sigma} \left(\overline{D}_1y_1 +\cdots+\overline{D}_my_m \right)^{d}.
										$$ 
										Below we have our extension of the aforementioned result of Khovanskii and Pukhlikov \cite{KP}.
										
										\begin{corollary}\label{cor:kp}
											Let $R = \bigoplus_{i=0}^d \HH^{2i}\left(X_\Sigma, \ZZ\right)$.
											Under the Macaulay inverse system notation of \autoref{thm_cohom_Macaulay}, the following statements hold:
											\begin{enumerate}[\rm (i)]
												\item There are isomorphisms of $\ZZ$-algebras
												\[
												R \;\cong\; \ZZ[x_1,\dots,x_n]/I \;\cong\; \ZZ[x_1,\dots,x_m]/J
												\]
												where 
												$$
												I \;=\; \left\{g\in \ZZ[x_1,\dots,x_n] \,\mid\, g\cdot N^{-1}\left(V(y_1,\dots,y_n)\right)= 0 \right\} \;=\; \mathcal{I}+\mathcal{J}
												$$ 
												and 
												$$
												J \;=\; \left\{g\in \ZZ[x_1,\dots,x_m] \,\mid\, g\cdot N^{-1}\left(\overline{V}(y_1,\dots,y_m)\right) = 0\right\}.
												$$
												Here $N^{-1}$ denotes the inverse of the normalization operator. 
												
												\item If each $D_i$ is nef and $P_i = P_{D_i}$ is the polytope associated to $D_i$, then 
												\[
												G_R(y_1,\ldots,y_n) \;=\; \sum_{\alpha_1 + \cdots + \alpha_n = d} \mathrm{MV}_\alpha(P_1,\ldots,P_n) \, y_1^{\alpha_1}\cdots y_n^{\alpha_n}.
												\]
											\end{enumerate}
										\end{corollary}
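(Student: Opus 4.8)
The plan is to realize both presentations of $R=\HH^\bullet(X_\Sigma,\ZZ)$ as instances of \autoref{thm_cohom_Macaulay} and then to rewrite the Macaulay dual generator $G_R$ in terms of volume polynomials and mixed volumes; once \autoref{thm_cohom_Macaulay} is in hand the corollary is essentially a translation exercise, so the difficulties are bookkeeping rather than conceptual.

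First I would check the hypotheses of \autoref{thm_cohom_Macaulay}. By the Jurkiewicz--Danilov presentation recalled above, $\ZZ[x_1,\dots,x_n]/(\mathcal{I}+\mathcal{J})\xrightarrow{\cong}R$ via $x_i\mapsto D_i$, the ring $R$ is $\ZZ$-torsion-free, and $\HH^{\mathrm{odd}}(X_\Sigma,\ZZ)=0$; since $X_\Sigma$ is smooth and complete, $R$ satisfies Poincar\'e duality over $\ZZ$ with respect to the degree map $\rho\colon R_d=\HH^{2d}(X_\Sigma,\ZZ)\to\ZZ$. Hence \autoref{thm_cohom_Macaulay} applies to the graded presentation $S=\ZZ[x_1,\dots,x_n]\surjects R$, $\delta_i=1$, with kernel $I=\mathcal{I}+\mathcal{J}$, and part (iii) gives $I=\{g\in S\mid g\cdot G_R=0\}$, where $G_R=\sum_{|\alpha|=d}\rho(x_1^{\alpha_1}\cdots x_n^{\alpha_n})\,y^\alpha\in\ZZ[y_1,\dots,y_n]$. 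For the reduced presentation I would use that $\Pic(X_\Sigma)\cong\HH^2(X_\Sigma,\ZZ)=R_1$ generates the $\ZZ$-algebra $R$ (again from the Jurkiewicz--Danilov presentation), so a $\ZZ$-basis $\overline{D}_1,\dots,\overline{D}_m$ of $\Pic(X_\Sigma)$ yields a graded surjection $\ZZ[x_1,\dots,x_m]\surjects R$, $x_i\mapsto\overline{D}_i$, $\delta_i=1$, with kernel $J$; applying \autoref{thm_cohom_Macaulay}(iii) once more, $J=\{g\mid g\cdot G'_R=0\}$ with $G'_R=\sum_{|\alpha|=d}\rho(\overline{D}_1^{\alpha_1}\cdots\overline{D}_m^{\alpha_m})\,y^\alpha$.

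Next I would make the normalization identity explicit. Since $\rho(x_1^{\alpha_1}\cdots x_n^{\alpha_n})=\int_{X_\Sigma}D_1^{\alpha_1}\cdots D_n^{\alpha_n}$, expanding the volume polynomial gives $V(y_1,\dots,y_n)=\int_{X_\Sigma}(D_1y_1+\cdots+D_ny_n)^d=\sum_{|\alpha|=d}\binom{d}{\alpha}\rho(x^\alpha)\,y^\alpha$, so applying $N^{-1}$ (which multiplies the coefficient of $y^\alpha$ by $\alpha!$) yields $N^{-1}(V)=d!\cdot G_R$; the identical computation gives $N^{-1}(\overline{V})=d!\cdot G'_R$. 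Because $T=\ZZ[y_1,\dots,y_n]$ is $\ZZ$-torsion-free, $g\cdot(d!\,G_R)=0$ if and only if $g\cdot G_R=0$, so replacing $G_R$ by $N^{-1}(V)$ does not change its annihilator; combining this with the previous paragraph gives $I=\{g\mid g\cdot N^{-1}(V)=0\}=\mathcal{I}+\mathcal{J}$ and $J=\{g\mid g\cdot N^{-1}(\overline{V})=0\}$, together with the isomorphisms $R\cong\ZZ[x_1,\dots,x_n]/I\cong\ZZ[x_1,\dots,x_m]/J$. This is part (i), and it is precisely the characteristic-free form of the Khovanskii--Pukhlikov description \cite{KP}.

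For part (ii), the only additional ingredient is the classical computation of intersection numbers of nef Cartier divisors on a complete toric variety as mixed volumes: when each $D_i$ is nef, $\int_{X_\Sigma}D_1^{\alpha_1}\cdots D_n^{\alpha_n}=\mathrm{MV}_\alpha(P_1,\dots,P_n)$ for $|\alpha|=d$ (with the appropriate normalization of mixed volume; see \cite{Cox2011,CLO_UsingAG,EWALD}), and substituting this into $G_R=\sum_{|\alpha|=d}\rho(x^\alpha)\,y^\alpha$ from \autoref{thm_cohom_Macaulay}(iii) gives the claimed formula. The substantive content sits entirely in \autoref{thm_cohom_Macaulay} (and hence in \autoref{thm_rel_Gor} and the generic local duality it invokes), which we may assume; the main points to handle with care in the present argument are the verification that $R$ is generated in degree $2$, so that the $m$-variable presentation is genuinely covered by \autoref{thm_cohom_Macaulay}, and the matching of normalization conventions for $\mathrm{MV}_\alpha$ between the expansion of $V$ and the toric intersection formula.
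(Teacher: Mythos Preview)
Your proposal is correct and follows essentially the same route as the paper: apply \autoref{thm_cohom_Macaulay}(iii) to each presentation, identify $G_R$ with $\frac{1}{d!}N^{-1}(V)$ (the paper writes this as $V=d!\,N(G_R)$), and for part (ii) invoke the toric intersection-number/mixed-volume identity (the paper cites \cite[Theorem 13.4.3]{Cox2011} and compares coefficients). Your explicit remark that $T$ is $\ZZ$-torsion-free, so the factor $d!$ does not change the annihilator, and your check that $R$ is generated in degree one for the $m$-variable presentation, are details the paper leaves implicit but which are exactly the points needed.
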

										\begin{proof}
											(i)
											We concentrate on the isomorphism $R \cong \ZZ[x_1,\ldots,x_n]/I$.
											From \autoref{thm_cohom_Macaulay}(iii), we have $I = \{g \in \ZZ[x_1,\ldots, x_n] \mid g \cdot G_R = 0\}$ and that the Macaulay dual generator is given by 
											$$
											G_R(y_1,\ldots,y_n) \;=\; \sum_{\alpha_1 + \cdots + \alpha_n = d} \left(\int_{X_\Sigma} D_1^{\alpha_1}\cdots D_n^{\alpha_n} \right) \, y_1^{\alpha_1} \cdots y_n^{\alpha_n}.
											$$
											Then the equality $I \;=\; \left\{g\in \ZZ[x_1,\dots,x_n] \,\mid\, g\cdot N^{-1}\left(V(y_1,\dots,y_n)\right)= 0 \right\}$ follows by noticing that 
											$$
											\Vol(y_1,\ldots,y_n) \;=\; d!\, N\left(G_R(y_1,\ldots,y_n)\right).
											$$
											The isomorphism $R \cong \ZZ[x_1,\ldots,x_m]/J$ follows verbatim.
											
											(ii)
											From \cite[Theorem 13.4.3]{Cox2011}, we obtain the equality
											\[
											V(y_1,\ldots, y_n) \;=\;  \Vol(y_1P_1+\cdots+y_nP_n). 
											\]
											By comparing coefficients we get $\int_{X_\Sigma} D_1^{\alpha_1}\cdots D_n^{\alpha_n} = \mathrm{MV}_\alpha(P_1,\ldots,P_n)$.
											Therefore, the equality 
											$$
											G_R(y_1,\ldots,y_n) \;=\; \sum_{\alpha_1 + \cdots + \alpha_n = d} \mathrm{MV}_\alpha(P_1,\ldots,P_n) \, y_1^{\alpha_1}\cdots y_n^{\alpha_n}
											$$
											follows from \autoref{thm_cohom_Macaulay}(iii).
										\end{proof}

										\begin{example}[The Hirzebruch surface $\mathcal{H}_r$]
											Consider the Hirzebruch surface $\mathcal{H}_r$ and label the one dimensional rays in the fan as $\rho_1 = \Cone(-\ee_1+r\ee_2)$, $\rho_2 = \Cone(\ee_2)$, $\rho_3 = \Cone(\ee_1)$, and $\rho_4 = \Cone(-\ee_2)$.
											The cohomology ring of $\mathcal{H}_r$ is given by
											\[
											R \;=\; \HH^\bullet(\mathcal{H}_r,\ZZ) \;\cong\; \frac{\ZZ[x_1,x_2,x_3,x_4]}{( x_1x_3,\, x_2x_4,\, -x_1+x_3,\,  rx_1+x_2-x_4)}.
											\]
											The Macaulay dual generator is given by 
											\[
											G_R(y_1,y_2,y_3,y_4) \;=\; -ry_2^2+ry_4^2+ y_1y_2+y_2y_3+y_3y_4+y_1y_4
											\]
											and the volume polynomial is given by 
											\[
											V(y_1,y_2,y_3,y_4) \;=\; -ry_2^2+ry_4^2+ 2y_1y_2+2y_2y_3+2y_3y_4+2y_1y_4 \;=\; 2!N(G_R(y_1,y_2,y_3,y_4)).
											\]
											Alternatively, the cohomology ring of $\mathcal{H}_r$ can be represented as 
											\[
											R \;=\; \HH^\bullet(\mathcal{H}_r,\ZZ) \;\cong\; \frac{\ZZ[x_3,x_4]}{(x_3^2,-rx_3x_4+x_4^2)}.
											\]
											The Macaulay dual generator is given by 
											\[
											G_R(y_3,y_4) \;=\; ry_4^2+ y_3y_4
											\]
											and the reduced volume polynomial is given by 
											\[
											\overline{V}(y_3,y_4) \;=\; ry_4^2 +2y_3y_4 \;=\; 2!N(G_R(y_3,y_4)).
											\]
											Since $D_{\rho_3}$ and $D_{\rho_4}$ are both nef divisors in $\mathcal{H}_r$ \cite[Example 6.3.23]{Cox2011}, we have that $\overline{V}(y_3,y_4)$ is a Lorentzian polynomial (as predicted by \autoref{thm_cohom_Macaulay}(iv)).
										\end{example}

										\section*{Acknowledgments}
										We thank Matt Larson for asking a question that prompted us to write \autoref{examp_Gor_not_Poincare} and \autoref{prop_Gor_and_Poincare}.
										
										\bibliographystyle{amsalpha}
										\bibliography{references}

									\end{document}